\newtheorem{theorem}{Theorem}
\newtheorem{proposition}{Proposition}
\newtheorem{lemma}{Lemma}
\newtheorem{example}{Example}%
\newtheorem{remark}{Remark}%
\newtheorem{definition}{Definition}%
\crefname{equation}{}{}
\renewcommand{\cref}[1]{\namecref{#1}\labelcref{#1}}
\Crefname{equation}{Eq.}{Eqs.}
\crefname{lemma}{Lemma~}{Lemmas~}
\crefname{section}{Section~}{Sections~}
\crefname{theorem}{Theorem~}{Theorems~}
\crefname{remark}{Remark~}{Remarks~}
\crefname{figure}{Fig.~}{Figs.~}
\crefname{table}{Table~}{Tables~}
\newcommand{\vecb}[1]{\boldsymbol{#1}}
\newcommand{\normmm}[1]{{\left\vert\kern-0.25ex\left\vert\kern-0.25ex\left\vert #1
   \right\vert\kern-0.25ex\right\vert\kern-0.25ex\right\vert}}
\begin{document}

\title{Analysis of a $\vecb{P}_1\oplus \vecb{RT}_0$ finite element method for linear elasticity with Dirichlet and mixed boundary conditions}

\author[1]{\fnm{Hongpeng} \sur{Li}}\email{lihongpeng\_sd@163.com}
\author[2,1]{\fnm{Xu} \sur{Li}}\email{xulisdu@126.com}
\author*[1]{\fnm{Hongxing} \sur{Rui}}\email{hxrui@sdu.edu.cn}

\affil[1]{\orgdiv{School of Mathematics}, \orgname{Shandong University}, \city{Jinan}, \postcode{250100}, \country{China}}
\affil[2]{\orgdiv{Eastern Institute for Advanced Study}, \orgname{Eastern Institute of Technology}, \city{Ningbo}, \postcode{315200}, \country{China}}

\abstract{In this paper, we investigate a low-order robust numerical method for the linear elasticity problem.
The method is based on a Bernardi--Raugel-like $\vecb{H}(\mathrm{div})$-conforming method
proposed first for the Stokes flows in [Li and Rui, IMA J. Numer. Anal. \textbf{42} (2022) 3711--3734].
Therein the lowest-order $\vecb{H}(\mathrm{div})$-conforming Raviart--Thomas space ($\vecb{RT}_0$) was added to the
classical conforming $\vecb{P}_1\times P_0$ pair to meet the inf-sup condition,
while preserving
the divergence constraint and some important features of conforming methods.
Due to the inf-sup stability of {the} $\vecb{P}_1\oplus \vecb{RT}_0\times P_0$ pair,
a locking-free elasticity discretization {with respect to} {the Lam\'{e} constant $\lambda$} can be naturally obtained.
Moreover, our scheme is gradient-robust for the pure and homogeneous displacement boundary problem,
that is, the discrete $\vecb{H}^1$-norm of the displacement is $\mathcal{O}(\lambda^{-1})$
when the external body force is a gradient field.
We also consider the mixed displacement and stress boundary problem,
whose $\vecb{P}_1\oplus \vecb{RT}_0$ discretization should be carefully
designed due to a consistency error arising from the $\vecb{RT}_0$ part.
We propose both symmetric and nonsymmetric schemes
to approximate the mixed boundary case.
The optimal error estimates are derived for the energy norm and/or $\vecb{L}^2$-norm.
Numerical experiments demonstrate the accuracy and robustness of our schemes.}

\keywords{linear elasticity, divergence-free element, gradient-robust, locking-free, mixed boundary conditions}
\maketitle

\section{Introduction} \label{Section1}
   This paper is concerned with a low-order finite element method for the linear elasticity problem.
   Assume that $\Omega\subset \mathbb{R}^d (d=2,3)$ is a bounded domain
   with {polyhedral} and Lipschitz-continuous boundary $\partial\Omega$.
   The symmetric $d\times d$ stress tensor is defined as
   \begin{align*}
      \vecb{\sigma}(\vecb{u}):=\left(2\mu\vecb{\epsilon}(\vecb{u})+\lambda(\nabla\cdot \vecb{u})\mathbf{I}\right)
      \quad\text{with}\quad
      \vecb{\epsilon}(\vecb{u}):=(\nabla\vecb{u}+\nabla\vecb{u}^\top)/2,
   \end{align*}
   where $\vecb{u}$ is the displacement of the {elastic material} and
   $\mathbf{I}\in \mathbb{R}^{d\times d}$ is the identity matrix.
   $\lambda$ and $\mu$ are two Lam\'{e} parameters which satisfy $0<\lambda_0<\lambda<\infty$ and $0<\mu_1<\mu<\mu_2$.
   Then the linear elasticity problem with homogeneous displacement boundary condition
   is as follows:
   \begin{equation} \label{M1} 
      \begin{aligned}
         -\nabla\cdot \vecb{\sigma}(\vecb{u}) & =\vecb{f} &  & \text { in } \Omega,        \\
         \vecb{u}                             & =\vecb{0} &  & \text { on }\partial\Omega,
      \end{aligned}
   \end{equation}
   with $\vecb{f}\in [L^2(\Omega)]^d$ being an external body force.
   Using {the} Green formulation we obtain a variational formulation of (\ref{M1}): Find ${\vecb{u}\in\vecb{V}:=[{H}_{0}^{1}(\Omega)]^d}$ such that
   \begin{equation} \label{M1weak}
      2\mu a(\vecb{u},\vecb{v})+\lambda(\nabla\cdot\vecb{u},\nabla\cdot\vecb{v})=(\vecb{f},\vecb{v})\quad \forall \vecb{v}\in \vecb{V},
   \end{equation}
   where {$(\cdot,\cdot)$ denotes the $L^2$ inner products}, $a(\vecb{u},\vecb{v}):=(\vecb{\epsilon}(\vecb{u}{)},\vecb{\epsilon}(\vecb{v}))$ and 
   {${H}_{0}^{1}(\Omega)$ consists of the functions with vanishing trace in $H^1(\Omega)$}.
   Two kinds of robustness are considered in this contribution: locking-free property and gradient-robustness.
   The former means that the error estimates do not blow up as
   the Lam\'{e} constant $\lambda\rightarrow\infty$, while the latter means
   the dominant gradient fields in
   the governing equation do not lead to spurious displacement. To be more precise,
   if $\vecb{f}$ is a gradient field, it was proven in \cite{fu2021locking} that
   $\|\nabla\vecb{u}\|=O(\lambda^{-1})$ (i.e., as $\lambda\rightarrow \infty$, the true
   solution $\vecb{u}$ should tend to zero). Then a gradient-robust method should preserve
   this property.

   The locking phenomenon in elasticity problems is usually called ``volume locking" or ``Poisson locking".
   When $\lambda$ is very large, the material is nearly incompressible (i.e., $\nabla\cdot\vecb{u}\approx 0$).
   The standard finite element method, such as the continuous piecewise linear element, can behave very badly \cite{brenner2008mathematical,phillips2009overcoming}.
   Babu\v{s}ka and Suri \cite{babuvska1992locking} found that any polynomial of degree $k\geq 1$ cannot avoid locking on quadrilateral mesh.
   Volume locking has been dealt with in many different discretization approaches. We divide the discretization approaches into three large classes.
   The first class is based on the primal displacement equation (\ref{M1}).
   A variety of finite element methods have been implemented for this, such as {the} nonconforming
   Crouzeix--Raviart (CR) element \cite{HL2003}, {the} enriched Galerkin method \cite{yi2022locking}, {the} 
   weak Galerkin method \cite{yi2019lowest} and {the} discontinuous Galerkin method \cite{wihler2006locking}, to name just a few.
   The second class is to introduce the ``solid pressure" $p=\lambda \nabla \cdot \vecb{u}$ as
   an independent unknown. Then the primal formulation (\ref{M1}) can be reformulated as
   a generalized Stokes problem
   \begin{equation} \label{M1Change}
      -2\mu\nabla\cdot\vecb{\epsilon}(\vecb{u})-\nabla p=\vecb{f}, \quad
      \nabla\cdot\vecb{u}-\lambda^{-1}p=0  \text { in } \Omega.
   \end{equation}
   Any inf-sup stable mixed element method which is appropriate {for} the Stokes problem,
   would provide a locking-free formulation for the linear elasticity problem,
   cf. \cite{boffi2013mixed,chiumenti2002stabilized,john2016finite,rui2018locking}.
   Note that an inf-sup stable mixed formulation can usually be transformed into a primal
   formulation by static condesation
   of the pressure unknowns
   if the discrete pressure is discontinuous.
   The last class {transforms} the linear elasticity equations (\ref{M1}) into the Hellinger-Reissner formulation
   based on {the} Hellinger-Reissner variational principle \cite{arnold2007mixed}.
   And this method {produces} direct approximations to both stress and displacement.
   The most popular methods include mixed finite element {methods}
   \cite{arnold2002mixed,arnold2007mixed,lamichhane2009inf,hu2015finite,hu2016finite}, dual-mixed {methods}
   \cite{gatica2006analysis,gatica2007dual,gatica2009augmented}, and
   hybrid discontinuous Galerkin {methods} \cite{cockburn2013superconvergent,qiu2018hdg}.

   Compared to volume locking, gradient robustness is a new concept. The definition of gradient-robustness is given in Section \ref{Section2}.
   A related concept of gradient-robustness has been introduced first for the steady compressible isothermal Stokes equations in \cite{akbas2020gradient}.
   For the incompressible Stokes problem, gradient-robustness means pressure-robustness \cite{john2017divergence}, that is,
   when the external force in the momentum equation is a gradient field, it is only balanced by the pressure gradient.
   Fu et al. \cite{fu2021locking} proposed and analyzed an $\vecb{H}(\mathrm{div})$-conforming HDG scheme for linear elasticity (\ref{M1}),
   and the scheme is both locking-free and gradient-robust.
   Basava and Wollner \cite{basava2022pressure} applied 
   the pressure-robust reconstruction methods for the Stokes problem \cite{linke2014role,LMT2016} to 
   elasticity discretizations to get a gradient-robust method.
   Numerical schemes of linear elasticity may perform well when the body force is divergence-free,
   but may fail when the body force is a gradient field.
   The concept of gradient robustness gives us a new perspective to analyze the effectiveness of numerical schemes.
   Our goal is to construct algorithms that maintain parameter robustness about $\lambda$,
   and are also accurate when the body force in the momentum balance equation is dominated by a gradient field.

   We also consider mixed boundary conditions in this paper.
   Assume that the boundary $\partial\Omega$ consists of two parts:
   $\Gamma_D\subset\partial\Omega$, with $\vert\Gamma_D\vert>0$, and $\Gamma_N:=\partial\Omega\setminus\Gamma_D$.
   The elasticity problem with mixed boundary conditions becomes the primal formulation in (\ref{M1}) with
   \begin{equation} \label{M2}
      \vecb{u}=\vecb{0} \,\text{ on }\Gamma_D,\quad\vecb{\sigma}\vecb{n}=\vecb{g} \,\text{ on }\Gamma_N.
   \end{equation}
   We define $\vecb{V}_{\Gamma_D}:=\{\vecb{v}\in [H^1(\Omega)]^d:\vecb{v}|_{\Gamma_D}=\vecb{0}\}$ {and the} traction $\vecb{g}\in [H_{00}^{1/2}(\Gamma_N)]^d$,
   where $[H_{00}^{1/2}(\Gamma_N)]^d:=\left\{\vecb{v}|_{\Gamma_N}:\vecb{v}\in\vecb{V}_{\Gamma_D}\right\}$.
   The associated duality pairing with respect to the $[L^2(\Gamma_N)]^d$ is denoted by $\langle\cdot,\cdot\rangle_{\Gamma_N}$.
   The variational formulation of (\ref{M2}) is that: Find $\vecb{u}\in\vecb{V}_{\Gamma_D}$ such that
   \begin{equation} \label{M2weak}
      2\mu a(\vecb{u},\vecb{v})+\lambda(\nabla\cdot\vecb{u},\nabla\cdot\vecb{v})=(\vecb{f},\vecb{v})+\langle\vecb{g},\vecb{v}\rangle_{\Gamma_N}\quad \forall \vecb{v}\in \vecb{V}_{\Gamma_D}.
   \end{equation}
   Under the assumption $\vert\Gamma_D\vert>0$, the Korn's inequality holds {\cite{brenner2008mathematical}}, i.e., there exists a positive constant such that
   \begin{equation} \label{eqKorn}
      \Vert\vecb{v}\Vert_1 \leq C_{korn}\Vert\vecb{\epsilon}(\vecb{u})\Vert \quad \forall \vecb{u}\in \vecb{V}_{\Gamma_D},
   \end{equation}
   where {$\Vert\cdot\Vert_1$ and $\Vert\cdot\Vert$ denote the usual $H^1$ norms and $L^2$ norms, respectively}. Thus the unique solvability of (\ref{M1weak}) and (\ref{M2weak}) holds.

   The starting point of this paper is a kind of low-order inf-sup stable mixed element method for the Stokes
   problem proposed in \cite{li2022low}, where
   the velocity space is obtained by enriching the space of conforming piecewise linear polynomials ($\vecb{P}_1$) with the
   $\vecb{H}(\mathrm{div})$-conforming
   lowest-order Raviart--Thomas space, and the pressure space consists of piecewise constants with zero mean ($P_0$).
   The resulting scheme is divergence-free and pressure-robust, and the discrete formulation consists of volume integrals only,
   which is different from the usual $\vecb{H}(\mathrm{div})$-conforming methods with discontinuous Galerkin (DG) formulation
   \cite{Wang2007,CKS2007,john2017divergence,fu2021locking}.
   Moreover, since the pressure is discontinuous, as it is mentioned before, this method naturally leads to a locking-free
   primal discretization of the elasticity problem. The main contribution of this paper is twofold: on the one hand, for the
   pure Dirichlet problem, we prove that the resulting method is gradient-robust, on the other hand,
   for the mixed boundary problem, we point out that the extension from pure Dirichlet boundary case
   is not straight-forward and then
   propose some strategies to deal with the Neumann part, which is not involved in \cite{li2022low}.
   {A priori error estimate shows} that all schemes achieve optimal convergence order in the energy norm and $\vecb{L}^2$-norm.
   Numerical experiments show that
   an inappropriate treatment to mixed boundary conditions can lead to
   reduction of the convergence rates of the discrete solution. And our proposed $\vecb{P}_1\oplus \vecb{RT}_0$ schemes are
   numerically accurate and robust for both Dirichlet and mixed boundary conditions. 
   For mixed boundary problems,
   in contrast {to} the Crouzeix--Raviart element method using interior jump stabilization \cite{HL2003},  
   our formulation consists of standard volume integrals and face integrals over the Neumann boundary. 
   Note that 
   interior face integrals like jump stabilization can dramatically change the sparsity of  
   the coefficient matrix. Although this issue of Crouzeix--Raviart element can be bypassed 
   by replacing one component with conforming linear elements \cite{KS1995}, its three-dimensional extension
   has to apply higher-order elements for one component \cite{ZZ2017,HS2018}.


   The rest of the paper is organized as follows.
   In Section \ref{Section2}, we present the fundamental results about gradient-robustness. We discuss some finite element schemes from the perspective of locking-free and gradient-robust properties.
   In Section \ref{Section3} we propose the $\vecb{P}_1\oplus \vecb{RT}_0$ finite element schemes and analyze the well-posedness.
   The uniform convergence analysis about $\lambda$ and the gradient-robustness are analyzed in Section \ref{Section4}.
   The case of mixed boundary conditions is considered in Section \ref{Section5}.
   Finally we do some numerical studies in Section \ref{Section6}.


   \section{Gradient-robustness and some finite element schemes} \label{Section2}
   In this section and next two sections, the elasticity problem with homogeneous displacement boundary conditions is considered. 
   With convention the boundary value problem with pure displacement (resp. traction) boundary conditions is called a pure displacement (resp. traction) problem. 
   We introduce some fundamental results about Helmholtz decomposition and Helmholtz projector \cite{linke2014role}.
   {Let us define
   \begin{align*}
      \vecb{H}(\mathrm{div};\Omega)&:=\{\vecb{v}\in [L^2(\Omega)]^d: \nabla\cdot\vecb{v}\in L^2(\Omega)\}.
   \end{align*}}
   Every vector field $\vecb{f}\in [L^2(\Omega)]^d$ has a unique decomposition {into a irrotational field $\nabla\phi$ with $\phi\in H^1(\Omega)/\mathbb{R}$ and a divergence-free component $\vecb{f}_0\in\vecb{H}(\mathrm{div};\Omega)$}, i.e., $\vecb{f}=\nabla\phi+\vecb{f}_0$.
   {Moreover}, $\vecb{f}_0$ is $\vecb{L}^2$-orthogonal to $\nabla\psi$ for all $\psi\in H^1(\Omega)$.
   {The Helmholtz projector $\mathbb{P}(\vecb{f}):=\vecb{f}_0$ preserves the divergence-free part.}
   For any $\psi\in H^1(\Omega)$, {it holds} $\mathbb{P}(\nabla\psi)=0$.
   We define the divergence-free subspace $\vecb{V}_{\mathrm{div}}$ and its orthogonal complement $\vecb{V}^{\perp}_{\mathrm{div}}$ by
   $$
      \vecb{V}_{\mathrm{div}}:=\left\{ \vecb{u}\in \vecb{V}:(\nabla\cdot\vecb{u},q)=0 \quad\forall q\in W \right\}
      =\left\{ \vecb{u}\in \vecb{V}:\nabla\cdot\vecb{u}=0 \right\},
   $$
   $$
      \vecb{V}^{\perp}_{\mathrm{div}}:=\left\{ \vecb{u}\in \vecb{V}:a(\vecb{u},\vecb{v})=0 \quad\forall\vecb{v}\in\vecb{V}_{\mathrm{div}} \right\},
   $$
   respectively, where $W:=L^{2}_0(\Omega)=\{q\in L^2(\Omega):\int_\Omega q\,d\vecb{x}=0\}$ is the space of $L^2$ functions with zero mean.
   Now any function $\vecb{u}\in\vecb{V}$ can be uniquely decomposed as $\vecb{u}=\vecb{u}^0+\vecb{u}^{\perp}\in\vecb{V}_{\mathrm{div}}\oplus\vecb{V}^{\perp}_{\mathrm{div}}$.
   The following boundedness of $\vecb{u}$ in {the} $\vecb{H}^1$-norm can be derived.
   \begin{lemma} \label{lemStab}
      Let $\vecb{u}$ be the solution of (\ref{M1weak}). It satisfies the following stability estimate:
      \begin{equation} \label{eqlem3}
         \Vert\vecb{u}\Vert_1 \leq {\frac{1}{2\mu/C_{korn}^2+\lambda\beta^2}}\Vert\vecb{f}\Vert_{-1} + {\frac{C_{korn}^2}{2\mu}}\Vert\mathbb{P}(\vecb{f})\Vert_{-1},
      \end{equation}
      where $\|\cdot\|_{-1}$ denotes the $\vecb{H}^{-1}$ norm.
   \end{lemma}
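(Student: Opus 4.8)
The natural approach is to exploit the $a(\cdot,\cdot)$-orthogonal splitting $\vecb{u}=\vecb{u}^0+\vecb{u}^{\perp}\in\vecb{V}_{\mathrm{div}}\oplus\vecb{V}^{\perp}_{\mathrm{div}}$ introduced just above, to estimate $\vecb{u}^0$ and $\vecb{u}^{\perp}$ separately by choosing each of them as a test function in (\ref{M1weak}), and then to close with the triangle inequality $\Vert\vecb{u}\Vert_1\le\Vert\vecb{u}^0\Vert_1+\Vert\vecb{u}^{\perp}\Vert_1$. The second term of (\ref{eqlem3}) will come from $\vecb{u}^0$ and the first from $\vecb{u}^{\perp}$.

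For the divergence-free part, I take $\vecb{v}=\vecb{u}^0$ in (\ref{M1weak}). The term $\lambda(\nabla\cdot\vecb{u},\nabla\cdot\vecb{u}^0)$ drops out since $\nabla\cdot\vecb{u}^0=0$, and $a(\vecb{u},\vecb{u}^0)=a(\vecb{u}^0,\vecb{u}^0)=\Vert\vecb{\epsilon}(\vecb{u}^0)\Vert^2$ because $a(\vecb{u}^{\perp},\vecb{u}^0)=0$ by the definition of $\vecb{V}^{\perp}_{\mathrm{div}}$; moreover $\vecb{u}^0$, being divergence-free with vanishing trace, is $\vecb{L}^2$-orthogonal to the irrotational part $\nabla\phi$ of $\vecb{f}$, so $(\vecb{f},\vecb{u}^0)=(\mathbb{P}(\vecb{f}),\vecb{u}^0)$. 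Cauchy--Schwarz and Korn's inequality (\ref{eqKorn}) then give $2\mu\Vert\vecb{\epsilon}(\vecb{u}^0)\Vert^2\le C_{korn}\Vert\mathbb{P}(\vecb{f})\Vert_{-1}\Vert\vecb{\epsilon}(\vecb{u}^0)\Vert$, whence $\Vert\vecb{u}^0\Vert_1\le C_{korn}\Vert\vecb{\epsilon}(\vecb{u}^0)\Vert\le\frac{C_{korn}^2}{2\mu}\Vert\mathbb{P}(\vecb{f})\Vert_{-1}$. This is exactly where the gradient-robust mechanism surfaces: the dominant field $\nabla\phi$ has left the estimate entirely. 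For $\vecb{u}^{\perp}$, I take $\vecb{v}=\vecb{u}^{\perp}$ in (\ref{M1weak}); using again $a(\vecb{u}^0,\vecb{u}^{\perp})=0$ and $\nabla\cdot\vecb{u}^0=0$ yields the energy identity $2\mu\Vert\vecb{\epsilon}(\vecb{u}^{\perp})\Vert^2+\lambda\Vert\nabla\cdot\vecb{u}^{\perp}\Vert^2=(\vecb{f},\vecb{u}^{\perp})\le\Vert\vecb{f}\Vert_{-1}\Vert\vecb{u}^{\perp}\Vert_1$. Bounding the left-hand side from below by $\big(2\mu/C_{korn}^2+\lambda\beta^2\big)\Vert\vecb{u}^{\perp}\Vert_1^2$ --- Korn for the first summand, and the inf-sup estimate $\Vert\nabla\cdot\vecb{v}\Vert\ge\beta\Vert\vecb{v}\Vert_1$ valid on $\vecb{V}^{\perp}_{\mathrm{div}}$ for the second --- and dividing by $\Vert\vecb{u}^{\perp}\Vert_1$ produces the first term of (\ref{eqlem3}). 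Adding the two bounds finishes the proof.

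The step that requires genuine care is the inf-sup lower bound $\Vert\nabla\cdot\vecb{v}\Vert\ge\beta\Vert\vecb{v}\Vert_1$ on $\vecb{V}^{\perp}_{\mathrm{div}}$, i.e., the quantitative form of the continuous LBB condition for the $\vecb{V}\times W$ pair. Given $\vecb{v}\in\vecb{V}^{\perp}_{\mathrm{div}}$ one sets $q:=\nabla\cdot\vecb{v}\in W$, invokes the inf-sup condition to get a preimage $\vecb{w}\in\vecb{V}$ with $\nabla\cdot\vecb{w}=q$ and $\Vert\vecb{w}\Vert_1$ controlled through $\beta$, notes that $\vecb{v}-\vecb{w}\in\vecb{V}_{\mathrm{div}}$, and uses $a$-orthogonality to write $\Vert\vecb{\epsilon}(\vecb{v})\Vert^2=a(\vecb{v},\vecb{w})\le\Vert\vecb{\epsilon}(\vecb{v})\Vert\,\Vert\vecb{\epsilon}(\vecb{w})\Vert$, so $\Vert\vecb{\epsilon}(\vecb{v})\Vert\le\Vert\vecb{\epsilon}(\vecb{w})\Vert$, after which a final use of Korn transfers the estimate back to $\Vert\vecb{v}\Vert_1$. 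Exactly how the Korn and inf-sup steps are chained (and with respect to which norm $\beta$ is measured) is what pins down the explicit constants displayed in (\ref{eqlem3}); everything else reduces to routine Cauchy--Schwarz and Korn arguments.
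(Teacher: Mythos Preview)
Your proposal is correct and follows essentially the same route as the paper: decompose $\vecb{u}=\vecb{u}^0+\vecb{u}^{\perp}$, test (\ref{M1weak}) with $\vecb{u}^0$ and $\vecb{u}^{\perp}$ separately, use $(\vecb{f},\vecb{u}^0)=(\mathbb{P}(\vecb{f}),\vecb{u}^0)$ and Korn for the first bound, and Korn together with the inf-sup estimate $\Vert\vecb{u}^{\perp}\Vert_1\le\beta^{-1}\Vert\nabla\cdot\vecb{u}^{\perp}\Vert$ for the second. The paper simply cites the inf-sup literature for the estimate (\ref{eqDivMap}) rather than sketching the preimage-and-orthogonality argument you give, but otherwise the arguments coincide.
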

   \begin{proof}
      The inf-sup condition \cite{john2016finite}
      \begin{align}\label{continuousinfsup}
         \sup_{\vecb{v}\in\vecb{V}\setminus\{\vecb{0}\}}\frac{(\nabla\cdot\vecb{v},q)}{\Vert\vecb{v}\Vert_1}\geq \beta\Vert q\Vert
      \end{align}
      implies the divergence operator is bijective from $\vecb{V}^{\perp}_{\mathrm{div}}$ to $W$, and
      \begin{equation} \label{eqDivMap}
         \Vert\vecb{u}^{\perp}\Vert_1\leq\frac{1}{\beta}\Vert\nabla\cdot\vecb{u}^{\perp}\Vert.
      \end{equation}
      We set $\vecb{v}=\vecb{u}^{\perp}$ in the variational formulation (\ref{M1weak}), and use the $a(\cdot,\cdot)$-orthogonality to get
      \begin{align*}
         2\mu a(\vecb{u}^{\perp},\vecb{u}^{\perp})+\lambda(\nabla\cdot\vecb{u}^{\perp},\nabla\cdot\vecb{u}^{\perp})=(\vecb{f},\vecb{u}^{\perp}).
      \end{align*}
      By Korn's inequality (\ref{eqKorn}), (\ref{eqDivMap}) and the Cauchy-Schwarz inequality, we get
      \begin{equation} \label{eqUperp2}
         \frac{2\mu}{C_{korn}^2}\Vert\vecb{u}^{\perp}\Vert^2_1 + \lambda\beta^2\Vert\vecb{u}^{\perp} \Vert^2_1\leq \Vert\vecb{f}\Vert_{-1}\Vert\vecb{u}^{\perp}\Vert_1,
      \end{equation}
      Thus
      \begin{equation} \label{eqUperp}
         \Vert\vecb{u}^{\perp}\Vert_1 \leq {\frac{1}{2\mu/C_{korn}^2+\lambda\beta^2}}{\Vert\vecb{f}\Vert_{-1}}.
      \end{equation}
      Next, testing with an arbitrary divergence-free function $\vecb{v}^0\in\vecb{V}_{\mathrm{div}}$ in (\ref{M1weak}) gives
      \begin{align*}
         2\mu a(\vecb{u}^0,\vecb{v}^0)=(\vecb{f},\vecb{v}^0)=(\mathbb{P}(\vecb{f}),\vecb{v}^0).
      \end{align*}
      Setting $\vecb{v}^0=\vecb{u}^0$, by Korn's inequality (\ref{eqKorn}) it yields
      \begin{align*}
         \frac{2\mu}{C_{korn}^2}\Vert\vecb{u}^0\Vert^2_1  \leq \Vert\mathbb{P}(\vecb{f})\Vert_{-1}\Vert\vecb{u}^0\Vert_1,
      \end{align*}
      which implies
      \begin{equation} \label{eqU0}
         \Vert\vecb{u}^0\Vert_1 \leq {\frac{C_{korn}^2}{2\mu}}\Vert\mathbb{P}(\vecb{f})\Vert_{-1}.
      \end{equation}
      Then inequality (\ref{eqlem3}) follows immediately from (\ref{eqUperp}) and (\ref{eqU0}).
   \end{proof}
   Following the proof of Lemma \ref{lemStab}, we can easily prove the following lemma, which is introduced in \cite{fu2021locking}.
   It characterizes an important feature of the exact solution of nearly incompressible linear elasticity.

   \begin{lemma}
      If $\vecb{f}$ in (\ref{M1}) is a gradient field,
      i.e., $\vecb{f}=\nabla\phi$, $\phi\in H^1(\Omega)/\mathbb{R}$,
      the solution $\vecb{u}=\vecb{u}^0+\vecb{u}^{\perp}$ of (\ref{M1weak}) satisfies,
      \begin{equation} \label{eqTh1}
         \vecb{u}^0=\vecb{0},\quad \Vert\vecb{u}\Vert_1=\Vert\vecb{u}^{\perp}\Vert_1\leq{\frac{1}{2\mu/C_{korn}^2+\lambda\beta^2}}\Vert\phi\Vert.
      \end{equation}
   \end{lemma}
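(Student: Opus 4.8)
The plan is to obtain this statement as a short corollary of the proof of Lemma~\ref{lemStab}, exploiting the fact that a gradient field lies in the kernel of the Helmholtz projector. First I would observe that, since $\vecb{f}=\nabla\phi$, the identity $\mathbb{P}(\nabla\psi)=0$ recorded above gives $\mathbb{P}(\vecb{f})=\vecb{0}$. Feeding this into estimate (\ref{eqU0}) from the proof of Lemma~\ref{lemStab} yields $\Vert\vecb{u}^0\Vert_1\le \frac{C_{korn}^2}{2\mu}\Vert\mathbb{P}(\vecb{f})\Vert_{-1}=0$, hence $\vecb{u}^0=\vecb{0}$ and $\vecb{u}=\vecb{u}^{\perp}$, so in particular $\Vert\vecb{u}\Vert_1=\Vert\vecb{u}^{\perp}\Vert_1$.

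It then remains only to sharpen the bound (\ref{eqUperp}) on $\Vert\vecb{u}^{\perp}\Vert_1$ by replacing the generic dual norm $\Vert\vecb{f}\Vert_{-1}$ with $\Vert\phi\Vert$. For this I would integrate by parts: for every $\vecb{v}\in\vecb{V}=[H_0^1(\Omega)]^d$ the boundary term vanishes because $\vecb{v}$ has zero trace, so $(\vecb{f},\vecb{v})=(\nabla\phi,\vecb{v})=-(\phi,\nabla\cdot\vecb{v})$; moreover this pairing is unchanged if $\phi$ is shifted by a constant, which is consistent with $\phi\in H^1(\Omega)/\mathbb{R}$ and lets us read $\Vert\phi\Vert$ as the norm of the zero-mean representative. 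The Cauchy--Schwarz inequality then gives $\vert(\vecb{f},\vecb{v})\vert\le\Vert\phi\Vert\,\Vert\nabla\cdot\vecb{v}\Vert\le\Vert\phi\Vert\,\Vert\vecb{v}\Vert_1$, i.e. $\Vert\vecb{f}\Vert_{-1}\le\Vert\phi\Vert$. Combining this with (\ref{eqUperp}) and $\vecb{u}=\vecb{u}^{\perp}$ produces exactly (\ref{eqTh1}). Equivalently, one could avoid quoting (\ref{eqUperp}) and redo the argument directly: test (\ref{M1weak}) with $\vecb{u}^{\perp}$, use the $a(\cdot,\cdot)$-orthogonality together with $(\vecb{f},\vecb{u}^{\perp})=-(\phi,\nabla\cdot\vecb{u}^{\perp})$, and then bound the left-hand side below via Korn's inequality (\ref{eqKorn}) and (\ref{eqDivMap}).

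I do not expect any genuine obstacle here, since the whole argument is a specialization of Lemma~\ref{lemStab} once $\mathbb{P}(\vecb{f})=\vecb{0}$ has been noted. The only point requiring a little care is the dual-norm estimate for the gradient field together with the quotient-space reading of $\Vert\phi\Vert$; the vanishing of $\vecb{u}^0$ and the structure $\vecb{u}=\vecb{u}^{\perp}$ are then immediate, and they already encode the gradient-robustness we want to preserve at the discrete level.
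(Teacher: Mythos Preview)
Your proposal is correct and matches the paper's own argument: the paper also deduces $\vecb{u}^0=\vecb{0}$ from the proof of Lemma~\ref{lemStab} and then, for the bound on $\Vert\vecb{u}^\perp\Vert_1$, tests (\ref{M1weak}) with $\vecb{u}^{\perp}$, integrates by parts to get $(\nabla\phi,\vecb{u}^{\perp})=-(\phi,\nabla\cdot\vecb{u}^{\perp})$, and applies Cauchy--Schwarz together with Korn's inequality and (\ref{eqDivMap})---exactly the ``equivalent'' route you describe at the end.
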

   \begin{proof}
      The proof of Lemma \ref{lemStab} implies $\vecb{u}^0=\vecb{0}$. For the inequality in \cref{eqTh1}, one has
      \begin{align*}
         \frac{2\mu}{C_{korn}^2}\Vert\vecb{u}^{\perp}\Vert^2_1 + \lambda\beta^2\Vert\vecb{u}^{\perp}\Vert^2_1 \leq (\nabla\phi,\vecb{u}^{\perp}) \leq (\phi, \nabla\cdot\vecb{u}^{\perp})\leq \Vert\phi\Vert \Vert\vecb{u}^{\perp}\Vert_1.
      \end{align*}
      Then the inequality can be obtained analogously to \cref{eqUperp}.
   \end{proof}

   \begin{definition}[Gradient-robustness] \label{gradRobDef}
      A discretization of the linear elasticity problem (\ref{M1}) is called gradient-robust, if on an arbitrary but fixed grid, the discrete displacement solution $\vecb{u}_h$ satisfies
      \begin{align*}
         \Vert\vecb{u}_h\Vert_{1,h}=\mathcal{O}(\lambda^{-1})
      \end{align*}
      in case $\vecb{f}$ is a gradient field, where $\Vert\cdot\Vert_{1,h}$ is a discrete $H^1$ norm defined in finite element spaces.
   \end{definition}
   It is worth noting that in this paper the gradient-robustness is only considered under homogeneous displacement boundary conditions.
   In \cite{zdunek2023pressure} the role of some different boundary conditions on pressure-robustness for the incompressible linear elasticity problem is discussed,
   such as the normal or tangential components of displacement boundary conditions.

   According to \cref{M1Change}, when $\lambda\rightarrow\infty$,
   the elasticity problem tends to a Stokes problem
   \begin{equation} \label{EqStokes}
      -2\nu\nabla\cdot\vecb{\epsilon}(\vecb{u})-\nabla p=\vecb{f}, \quad\nabla\cdot\vecb{u}=0  \text { in } \Omega.
   \end{equation}
   Consider the case $\vecb{f}=\nabla\phi$ with $\phi\in H^1(\Omega)/\mathbb{R}$.
   For \cref{EqStokes} it holds $(\vecb{u},p)=(\vecb{0},-\phi)$ (cf. \cite{john2017divergence}),
   while for \cref{M1} (or \cref{M1Change})
   the displacement solution of a gradient-robust method tends to zero from Definition~\ref{gradRobDef}
   when $\lambda\rightarrow\infty$, which coincides with
   the solution of its Stokes limit. In this sense, we say a gradient-robust discretization
   is asymptotic preserving (AP) \cite{jin1999efficient}.
   From \cite{john2017divergence}
   the discrete velocity solution of a pressure-robust method for \cref{EqStokes}
   is also zero. An elasticity discretization should be gradient-robust if it corresponds to
   a pressure-robust scheme for the Stokes problem \cite{fu2021locking}.

   Next we list some finite element schemes, and take some experiments to verify their properties about {locking-free} and gradient-robustness.
   We omit the details of these elements, which can be found in other references.
   $P_k(T)\, (k\geq 0)$ denotes the space of polynomials of degree no more than $k$ on an element $T$.
   We assume the region is unit square, i.e., $\Omega=(0,1)^2$.
   We use the uniform triangular partition $\mathcal{T}_h$ {(see Fig. \ref{meshGrid} (left))}, where the spatial steps $h$ range from $1/8$ to $1/128$.
   Example \ref{example1} is designed to satisfy $\nabla\cdot\vecb{u}\rightarrow 0$ when the Lam\'{e} constant $\lambda\rightarrow\infty$. 
   It is taken to verify locking-free property. The right-hand term $\vecb{f}$ is determined by Equation (\ref{M1}).
   Example \ref{example2} is taken from \cite{fu2021locking}, the right-hand term is designed to be a gradient field to verify {the gradient-robustness} property.
   We use homogeneous Dirichlet boundary conditions for both examples. The Lam\'{e} constants {equal} $\lambda=1,10^2,10^4,10^6$, and $\mu=1$.
   \begin{example} \label{example1}
      The exact solutions are chosen as follows
      \begin{flalign*}
         \renewcommand{\arraystretch}{1.5}
         \left\{
         \begin{array}{l}
            u_1=sin(2\pi y)(-1+cos(2\pi x))+\frac{1}{\mu+\lambda}sin(\pi x)sin(\pi y), \\
            u_2=sin(2\pi x)(1-cos(2\pi y))+\frac{1}{\mu+\lambda}sin(\pi x)sin(\pi y).
         \end{array}\right.
      \end{flalign*}
   \end{example}
   \begin{example} \label{example2}
      We take $\vecb{f}=\nabla\psi$ with $\psi=x^6+y^6$.
   \end{example}

   \noindent\textbf{$\vecb{P}_1$ {scheme}.}
   Let $\vecb{V}_{h}=\{\vecb{v}\in\vecb{V}:\left.\vecb{v}\right|_{T}\in\left[P_{1}(T)\right]^{2} \,\forall T \in\mathcal{T}_{h}\}$ be the piecewise linear continuous finite element space. The finite element scheme for (\ref{M1}) is that: Find $\vecb{u}_h\in\vecb{V}_h$ such that
   \begin{equation} \label{SchemeP1}
      2\mu a(\vecb{u}_{h}, \vecb{v}_{h})+\lambda(\nabla\cdot\vecb{u}_{h},\nabla\cdot\vecb{v}_{h})=(\vecb{f}, \vecb{v}_{h}) \quad\forall\vecb{v}_{h}\in\vecb{V}_{h}.
   \end{equation}
   It is well-known that the continuous piecewise linear element would result in {a} poor convergence rate of the displacement.
   We take $W_h=\{q\in W:\left.q\right|_{T}\in P_0(T)\,\forall T\in\mathcal{T}_{h}\}$ such that $\nabla\cdot\vecb{V}_h\subset W_h$. 
   As shown in Figure \ref{figP1} (left), when $\lambda=1$ we obtain the optimal convergence rate of the displacement. {As $\lambda$ becomes large, the convergence rate deteriorates on the chosen meshes.}
   Figure \ref{figP1} (right) shows that $\Vert\nabla\vecb{u}_h\Vert=\mathcal{O}(\lambda^{-1})$. The scheme (\ref{SchemeP1}) is gradient-robust, but it is not free of volumetric locking.
   \begin{figure}[htbp]
      \centering
      \includegraphics[scale=0.5]{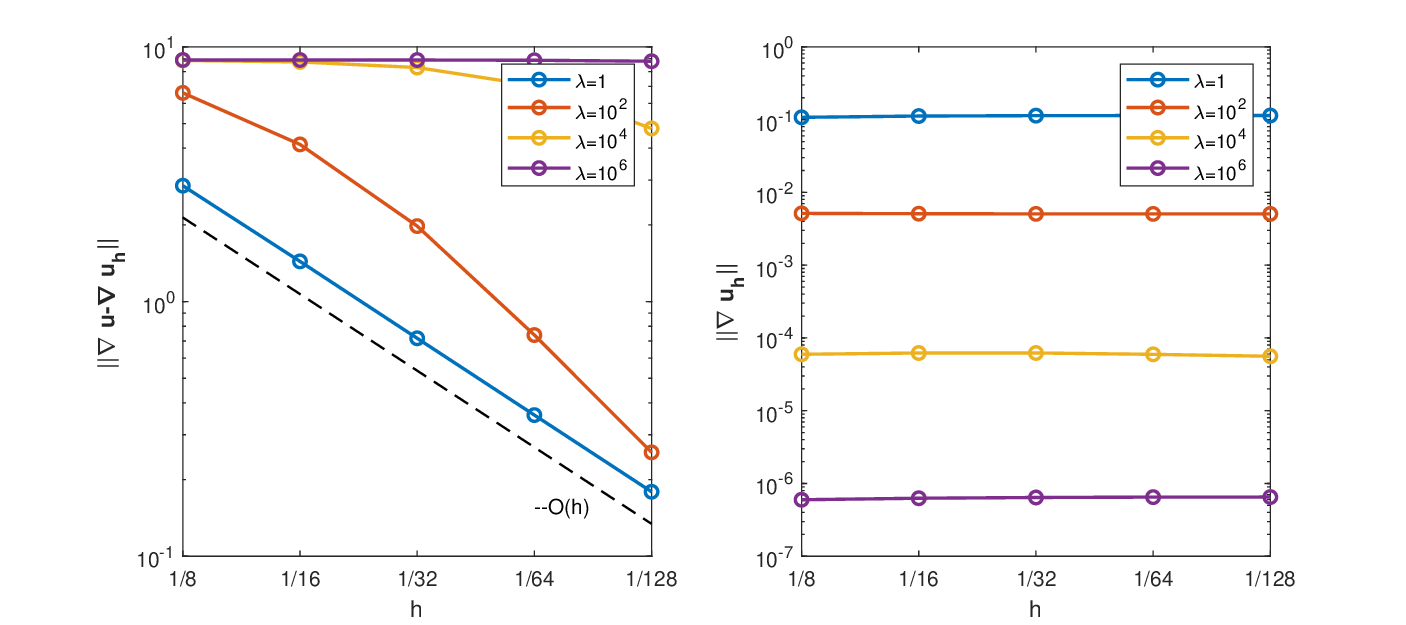}
      \caption{$\vecb{P}_1$ element for Example \ref{example1} (left) and Example \ref{example2} (right).}
      \label{figP1}
   \end{figure}

   \noindent\textbf{BR {scheme}.}
   The scheme is motivated by the discretization for the poroelasticity problem \cite{yi2017study}. The Bernardi--Raugel (BR) pair \cite{bernardi1985analysis} is the linear space enriched by edge bubble functions. We choose $\vecb{V}_h\subset\vecb{V}$ to be the BR element space.
   The finite element scheme for (\ref{M1}) is that: Find $\vecb{u}_h\in\vecb{V}_h$ such that
   \begin{equation} \label{SchemeBR}
      2\mu a(\vecb{u}_{h}, \vecb{v}_{h})+\lambda(P_h\nabla\cdot\vecb{u}_{h},P_h\nabla\cdot\vecb{v}_{h})=(\vecb{f}, \vecb{v}_{h}) \quad\forall\vecb{v}_{h}\in\vecb{V}_{h}.
   \end{equation}
   $P_h$ is the orthogonal $L^2$ projection defined in Section \ref{Section3}. In (\ref{SchemeBR}) we implement the technique of reduced integration \cite{malkus1978mixed} to obtain the uniform convergence {with respect to} $\lambda$.
   In the poroelasticity problem, the linear elasticity equation is used to describe the displacement of the solid medium. The proof of the uniform convergence of (\ref{SchemeBR}) with respect to $\lambda$ can be derived from \cite{yi2017study}.
   As shown in Figure \ref{figBR} (left), the lines for $\lambda=1,10^2,10^4,10^6$ are coincident. The difference {for} different $\lambda$ is {very small}. So the scheme (\ref{SchemeBR}) is uniformly convergent about $\lambda$.
   However, the Bernardi--Raugel element includes the edge bubble functions, which is piecewise quadratic polynomials. It is well-known that the classical Stokes discretization with this pair
   is not divergence-free or pressure-robust \cite{john2017divergence}, and hence \cref{SchemeBR} is not gradient-robust.

   {One should also note that, although a classical discretization with the Bernardi--Raugel element is not gradient-robust, it has been proven in \cite{basava2022pressure} that
   the lack of gradient-robustness can be overcome by a reconstruction strategy from the Stokes discretizations such as  
   \cite{LMT2016,LM:2016}, which is the so-called pressure-robust reconstruction.} For the Crouzeix--Raviart element method below,
   such a strategy is also considered and employed.
   \begin{figure}[htbp]
      \centering
      \includegraphics[scale=0.5]{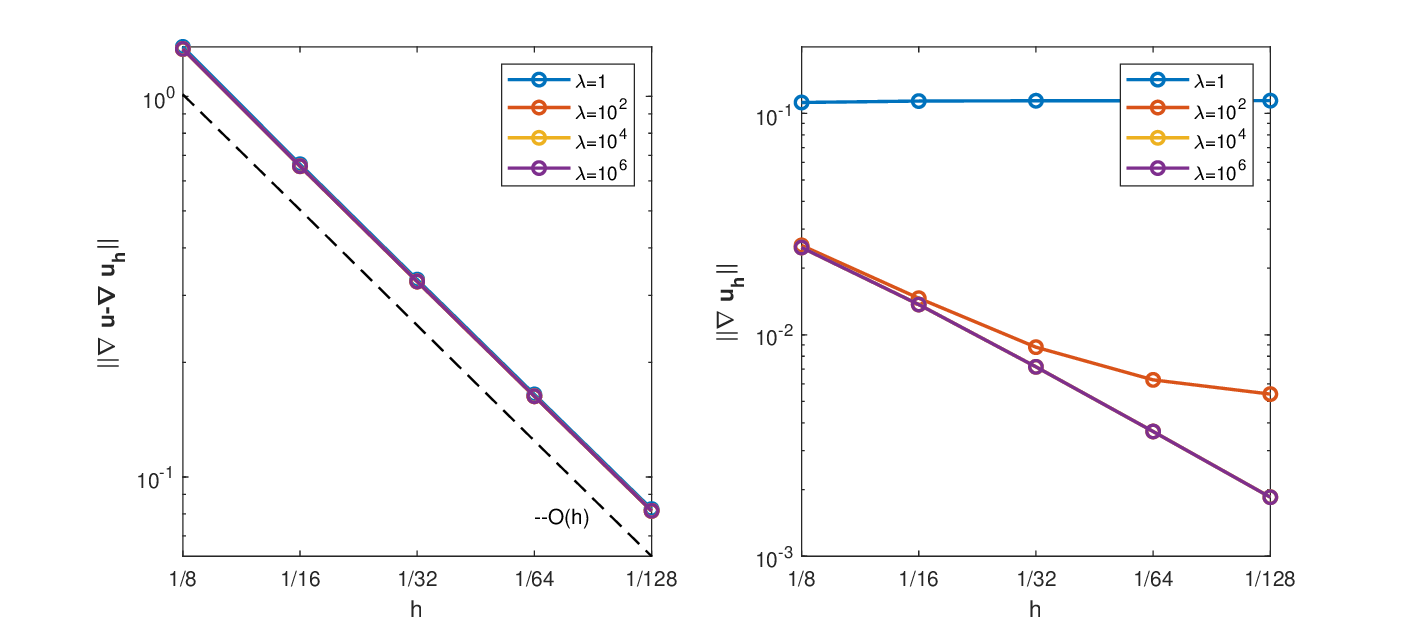}
      \caption{BR element for Example \ref{example1} (left) and Example \ref{example2} (right).}
      \label{figBR}
   \end{figure}

   \noindent\textbf{CR {scheme}.}
   Inspired by the reconstruction method \cite{linke2014role,basava2022pressure}, 
   we try the following gradient-robust reconstruction scheme.
   We use the first order nonconforming Crouzeix--Raviart (CR) element \cite{crouzeix1973conforming} to approximate the linear elasticity (\ref{M1}). When the boundary condition is homogeneous displacement condition, the equation (\ref{M1}) can be rewritten as
   \begin{equation} \label{eqCRChange}
      -\mu\Delta\vecb{u}-(\mu+\lambda)\nabla(\nabla\cdot\vecb{u})=\vecb{f}.
   \end{equation}
   Let $\vecb{V}_h$ be the CR element space. Its locking-free property is derived by Brenner {and Sung} \cite{brenner1992linear}. The piecewise gradient and piecewise divergence operators are defined as
   $$
      (\nabla_h\vecb{u},\nabla_h\vecb{v}):=\sum_{T\in\mathcal{T}_h}(\nabla\vecb{u},\nabla\vecb{v})_T, \quad
      (\nabla_h\cdot\vecb{u},\nabla_h\cdot\vecb{v}):=\sum_{T\in\mathcal{T}_h}(\nabla\cdot\vecb{u},\nabla\cdot\vecb{v})_T,
   $$
   respectively.
   To obtain the gradient-robustness property, we apply a reconstruction operator to the test function. Then the finite element scheme is that: Find $\vecb{u}_h\in\vecb{V}_h$ such that
   \begin{equation} \label{SchemeCR}
      \mu(\nabla_h\vecb{u}_h, \nabla_h\vecb{v}_h) + (\mu+\lambda)(\nabla_h\cdot\vecb{u}_h, \nabla_h\cdot\vecb{v}_h) = (\vecb{f},\Pi_h^{R}\vecb{v}_h) \quad\forall\vecb{v}_{h}\in\vecb{V}_{h}.
   \end{equation}
   Here $\Pi_h^{R}: \vecb{V}_h\rightarrow \vecb{V}_h^R$ is the lowest order Raviart--Thomas interpolation defined by \cref{eq:pihr}, where
   $\vecb{V}_h^R$ is the lowest-order Raviart--Thomas space (see \cref{def:pihr}). This reconstruction method is first introduced in \cite{linke2014role}.
   As shown in Figure \ref{figCR}, the scheme (\ref{SchemeCR}) is both locking-free and gradient-robust. Note that the variational form from (\ref{eqCRChange}) is only valid
   for pure displacement problems. If $\Gamma_N\neq \emptyset$, a variational form from (\ref{M1}) can be employed. {In this case} 
   a stabilized version is essential to guarantee the discrete Korn's inequality for CR elements \cite{HL2003}, where
   the jump stabilization need to be added to {interior faces}. 
   

   \begin{proposition}
      The finite element scheme (\ref{SchemeCR}) is gradient-robust in the sense of Definition \ref{gradRobDef}.
   \end{proposition}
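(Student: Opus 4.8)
\noindent\emph{Proof plan.} The plan is to transfer to the discrete level the argument that, in the continuous setting, gives $\vecb{u}^0=\vecb{0}$ and $\Vert\vecb{u}\Vert_1=\mathcal{O}(\lambda^{-1})$ when $\vecb{f}=\nabla\phi$ (the lemma following Lemma~\ref{lemStab}, cf. \cite{fu2021locking}); the reconstruction operator $\Pi_h^R$ is precisely the device that makes the load functional $(\vecb{f},\Pi_h^R\vecb{v}_h)$ on the right of (\ref{SchemeCR}) act on the test function only through its broken divergence. The first step is the identity
\begin{equation*}
   (\nabla\phi,\Pi_h^R\vecb{v}_h)=-(\phi,\nabla_h\cdot\vecb{v}_h)\qquad\forall\,\vecb{v}_h\in\vecb{V}_h ,
\end{equation*}
which rests on two standard properties of the lowest-order Raviart--Thomas interpolation: (i) $\nabla\cdot(\Pi_h^R\vecb{v}_h)=\nabla_h\cdot\vecb{v}_h$ holds elementwise, since $\nabla\cdot(\Pi_h^R\vecb{v}_h)|_T$ is constant and, by the divergence theorem and the definition of the edge normal moments, has the same mean over $T$ as $\nabla_h\cdot\vecb{v}_h|_T$; and (ii) $\Pi_h^R\vecb{v}_h\in\vecb{H}(\mathrm{div};\Omega)$ with vanishing normal component on $\partial\Omega$, because a Crouzeix--Raviart function satisfying the homogeneous displacement condition vanishes at the midpoints of the boundary edges, so the corresponding RT normal degrees of freedom vanish there. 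Integration by parts together with (i)--(ii) yields the identity with no surviving boundary term; it is, moreover, independent of the additive constant in $\phi\in H^1(\Omega)/\mathbb{R}$ precisely because the normal trace vanishes.

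With this identity the rest parallels the proof of Lemma~\ref{lemStab}. I would write $\vecb{u}_h=\vecb{u}_h^0+\vecb{u}_h^\perp$, with $\vecb{u}_h^0$ in the discretely divergence-free subspace $\vecb{V}_h^0:=\{\vecb{v}_h\in\vecb{V}_h:\nabla_h\cdot\vecb{v}_h=0\}$ and $\vecb{u}_h^\perp$ in its orthogonal complement for the inner product $(\nabla_h\vecb{w}_h,\nabla_h\vecb{z}_h)$. Testing (\ref{SchemeCR}) with $\vecb{v}_h=\vecb{u}_h^0$, the $(\mu+\lambda)$-term drops because $\nabla_h\cdot\vecb{u}_h^0=0$, and by the identity above the right-hand side equals $-(\phi,\nabla_h\cdot\vecb{u}_h^0)=0$; hence $\mu(\nabla_h\vecb{u}_h,\nabla_h\vecb{u}_h^0)=0$, and orthogonality gives $\Vert\nabla_h\vecb{u}_h^0\Vert=0$. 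A Crouzeix--Raviart field with vanishing broken gradient is constant on the connected mesh and vanishes on $\partial\Omega$, so $\vecb{u}_h^0=\vecb{0}$ and $\vecb{u}_h=\vecb{u}_h^\perp$, mirroring $\vecb{u}^0=\vecb{0}$ in the continuous case.

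Finally I would test (\ref{SchemeCR}) with $\vecb{v}_h=\vecb{u}_h$, obtaining
\begin{equation*}
   \mu\Vert\nabla_h\vecb{u}_h\Vert^2+(\mu+\lambda)\Vert\nabla_h\cdot\vecb{u}_h\Vert^2=-(\phi,\nabla_h\cdot\vecb{u}_h)\le\Vert\phi\Vert\,\Vert\nabla_h\cdot\vecb{u}_h\Vert ,
\end{equation*}
so discarding the nonnegative gradient term gives $\Vert\nabla_h\cdot\vecb{u}_h\Vert\le(\mu+\lambda)^{-1}\Vert\phi\Vert$. The discrete inf-sup stability of the Crouzeix--Raviart--$P_0$ pair (uniform in $h$, with constant $\beta_0>0$) yields, in analogy with (\ref{eqDivMap}), the estimate $\Vert\nabla_h\vecb{v}_h^\perp\Vert\le\beta_0^{-1}\Vert\nabla_h\cdot\vecb{v}_h^\perp\Vert$ on the complement; applied to $\vecb{u}_h=\vecb{u}_h^\perp$ (for which $\nabla_h\cdot\vecb{u}_h^\perp=\nabla_h\cdot\vecb{u}_h$) this gives $\Vert\nabla_h\vecb{u}_h\Vert\le\beta_0^{-1}(\mu+\lambda)^{-1}\Vert\phi\Vert=\mathcal{O}(\lambda^{-1})$. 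Since $\Vert\cdot\Vert_{1,h}$ is equivalent to $\Vert\nabla_h\cdot\Vert$ on $\vecb{V}_h$ by the discrete Poincar\'e inequality (which also supplies the coercivity making (\ref{SchemeCR}) well posed), we conclude $\Vert\vecb{u}_h\Vert_{1,h}=\mathcal{O}(\lambda^{-1})$, which is the assertion of Definition~\ref{gradRobDef}. The one step that is not mere bookkeeping is the identity $(\nabla\phi,\Pi_h^R\vecb{v}_h)=-(\phi,\nabla_h\cdot\vecb{v}_h)$, and inside it the vanishing of the boundary term in the integration by parts: this is exactly where the homogeneous displacement condition enters, and it is why the argument does not carry over verbatim to the mixed boundary problem of Section~\ref{Section5}, where $\Pi_h^R\vecb{v}_h$ need not have zero normal trace on $\Gamma_N$.
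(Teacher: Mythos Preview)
Your proof is correct and follows essentially the same approach as the paper: the key identity $(\nabla\phi,\Pi_h^R\vecb{v}_h)=-(\phi,\nabla_h\cdot\vecb{v}_h)$, the decomposition $\vecb{u}_h=\vecb{u}_h^0+\vecb{u}_h^\perp$ to conclude $\vecb{u}_h\in\vecb{V}_h^\perp$, and the discrete inf-sup bound on the complement are exactly the ingredients used there. The only cosmetic difference is in the final arithmetic---you discard $\mu\Vert\nabla_h\vecb{u}_h\Vert^2$ and then apply inf-sup, whereas the paper keeps both terms and uses $\Vert\nabla_h\cdot\vecb{u}_h\Vert\le\sqrt{d}\,\Vert\nabla_h\vecb{u}_h\Vert$ to obtain the slightly sharper constant $\sqrt{d}/((1+\beta_{cr}^2)\mu+\lambda\beta_{cr}^2)$.
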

   \begin{proof}
      Like in the continuous case, we define the discretely divergence-free space $\vecb{V}_h^0$ and its orthogonal complement $\vecb{V}_h^{\perp}$ as
      $$
         \vecb{V}_h^0:=\{\vecb{v}_h\in\vecb{V}_h:\nabla_h\cdot\vecb{v}_h=0\},
      $$
      $$
         \vecb{V}_h^{\perp}:=\{\vecb{u}_h\in\vecb{V}_h:(\nabla_h\vecb{u}_h,\nabla_h\vecb{v}_h)=0\quad\forall\vecb{v}_h\in\vecb{V}_h^0\}.
      $$
      {Note that $\Pi_h^R\vecb{v}_h\in \vecb{H}(\mathrm{div};\Omega)$}. Testing the equation (\ref{SchemeCR}) with arbitrary $\vecb{v}_h\in\vecb{V}_h^0$ and $\vecb{f}=\nabla\phi$ yields
      \begin{align*}
         (\nabla\phi,\Pi_h^{R}\vecb{v}_h)=-(\phi,\nabla\cdot\Pi_h^{R}\vecb{v}_h)=-(\phi,\nabla_h\cdot\vecb{v}_h),
      \end{align*}
      \begin{align*}
         \mu(\nabla_h\vecb{u}_h, \nabla_h\vecb{v}_h) = -(\mu+\lambda)(\nabla_h\cdot\vecb{u}_h, \nabla_h\cdot\vecb{v}_h)-(\phi,\nabla_h\cdot\vecb{v}_h)=0,
      \end{align*}
      thus $\vecb{u}_h\in\vecb{V}_h^{\perp}$. 
      Consider $\vecb{v}_h=\vecb{u}_h$, integration by parts for the right hand side gives
      \begin{equation} \label{eqGrad1}
         \mu(\nabla_h\vecb{u}_h, \nabla_h\vecb{u}_h) + (\mu+\lambda)(\nabla_h\cdot\vecb{u}_h, \nabla_h\cdot\vecb{u}_h) = -(\phi,\nabla\cdot\Pi_h^{R}\vecb{u}_h).
      \end{equation}
      Using the fact that $\nabla\cdot\Pi_h^{R}\vecb{u}_h=\nabla_h\cdot\vecb{u}_h$ (see \eqref{commute}), Eq. (\ref{eqGrad1}) implies
      \begin{equation} \label{eqGrad2}
         \mu\Vert\nabla_h\vecb{u}_h\Vert^2 + (\mu+\lambda)\Vert\nabla_h\cdot\vecb{u}_h\Vert^2 \leq \Vert\phi\Vert \Vert \nabla_h\cdot\vecb{u}_h\Vert.
      \end{equation}
      {Similarly to \cite[Lemma~3.58]{john2016finite}}, it holds
      \begin{align*}
         \Vert\nabla_h\vecb{u}_h\Vert \leq {\frac{1}{\beta_{cr}}\Vert\nabla_h\cdot\vecb{u}_h\Vert},
      \end{align*}
      with $\beta_{cr}$ being the discrete inf-sup constant for the CR element. Thus, {applying the fact that $\|\nabla_h\cdot\vecb{u}_h\|\leq \sqrt{d}\|\nabla_h\vecb{u}_h\|$}, formula (\ref{eqGrad2}) can be rewritten as
      \begin{align*}
         \mu\Vert\nabla_h\vecb{u}_h\Vert^2 + (\mu+\lambda)\beta_{cr}^2\Vert\nabla_h\vecb{u}_h\Vert^2 \leq \sqrt{d} \Vert\phi\Vert\Vert\nabla_h\vecb{u}_h\Vert.
      \end{align*}
      Dividing by $\Vert\nabla_h\vecb{u}_h\Vert$, one gets
      \begin{align*}
         \Vert\nabla_h\vecb{u}_h\Vert \leq {\frac{\sqrt{d}}{(1+\beta_{cr}^2)\mu+\lambda\beta_{cr}^2}}\Vert\phi\Vert,
      \end{align*}
      which demonstrates that $\Vert\nabla_h\vecb{u}_h\Vert=\mathcal{O}(\lambda^{-1})$.
   \end{proof}
   \begin{figure}[htbp]
      \centering
      \includegraphics[scale=0.5]{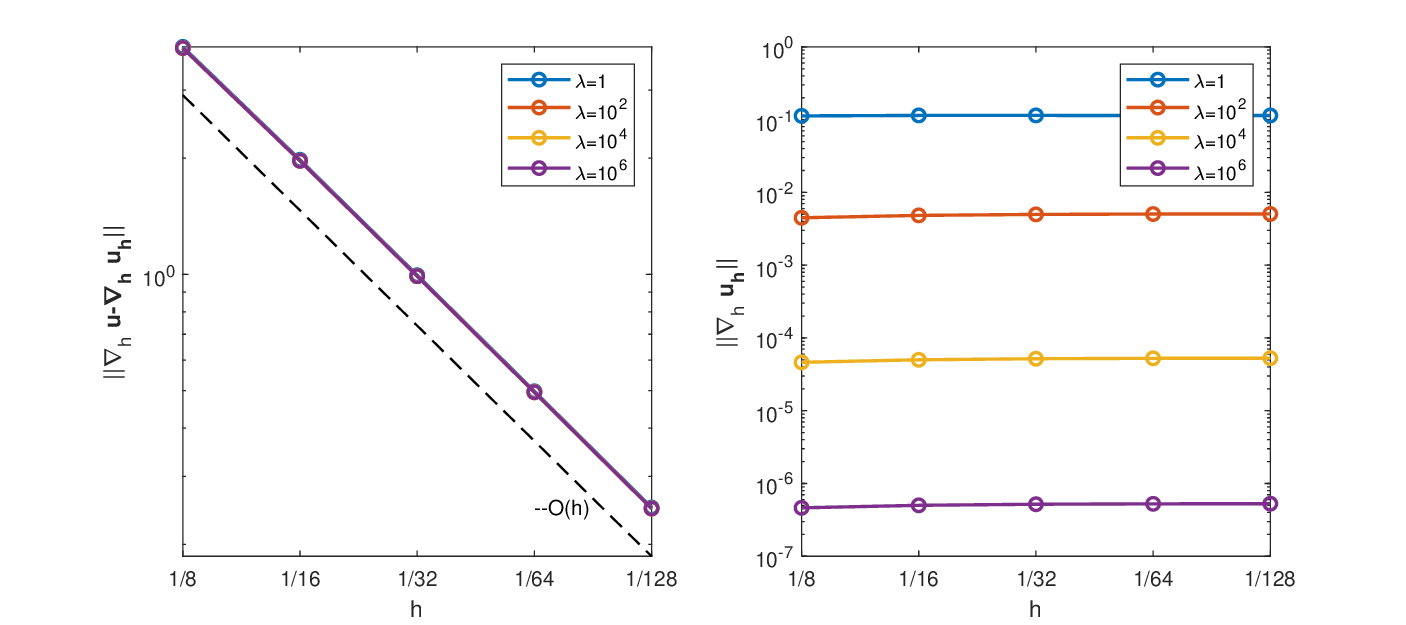}
      \caption{CR element for Example \ref{example1} (left) and Example \ref{example2} (right).}
      \label{figCR}
   \end{figure}

   \section{The \texorpdfstring{$\vecb{P}_1\oplus\vecb{RT}_0$}{} finite element schemes} \label{Section3}
   In this section we propose an $\vecb{H}(\mathrm{div})$-conforming finite element method for {the} linear elasticity problem with homogeneous displacement boundary condition.
   The $\vecb{P}_1\oplus\vecb{RT}_0$ finite element is proposed by Li and Rui \cite{li2022low} for Stokes flow,
   and it is the continuous vector-valued piecewise linear polynomial space $(\vecb{P}_1)$ enriched by the lowest-order Raviart-Thomas space $(\vecb{RT}_0)$.
   Let $\{\mathcal{T}_h\}$ be a family of triangluations of $\Omega$. Let $h_T$ and $h_e$ denote the diameters of elements $T$ and faces $e$, respectively, and $h:=\max_{T\in\mathcal{T}_h}h_T$.
   The set of interior faces and boundary faces of $\mathcal{T}_h$ are denoted
   by $\mathscr{E}^0$ and $\mathscr{E}^{\partial}$, respectively, and $\mathscr{E}:=\mathscr{E}^0 \cup \mathscr{E}^{\partial}$.
   An unit normal vector to the face $e$ is denoted by $\vecb{n}_e$. The family of meshes $\{\mathcal{T}_h\}$ is assumed to be shape-regular, that is, 
   there exists a constant $\gamma$, independent of $h$, such that
   \begin{align}\label{meshregularity}
      \frac{h_T}{\rho_T}\leq \gamma \quad \forall T\in\mathcal{T}_h,
   \end{align}
   where $\rho_T$ denotes the diameter of the largest ball contained in $T$.
   We define the space
   $\vecb{H}_0(\mathrm{div};\Omega):=\{\vecb{v}\in \vecb{H}(\mathrm{div};\Omega):\vecb{v}\cdot\vecb{n}=0 \text{ on } \partial\Omega\}$.
   For easier understanding of notations, the piecewise linear polynomial space is renamed as
   $$
      \vecb{V}_{h}^{1}:=\left\{\vecb{v} \in \vecb{V}:\left.\vecb{v}\right|_{T} \in\left[P_{1}(T)\right]^{d} \quad \forall T \in \mathcal{T}_{h}\right\}.
   $$
   The lowest-order Raviart--Thomas {finite element space \cite{boffi2013mixed} is denoted by}
   \begin{align}\label{def:pihr}
      \vecb{V}_h^R:=\left\{\vecb{v} \in \vecb{H}_0(\mathrm{div};\Omega):\left.\vecb{v}\right|_{T} \in\left[P_{0}(T)\right]^{d} \oplus \vecb{x} P_{0}(T) \quad\forall T\in\mathcal{T}_{h} \right\}.
   \end{align}
   The {space of piecewise constants reads}
   $$
      W_{h}:=\left\{q \in W:\left.q\right|_{T} \in P_{0}(T) \quad \forall T \in \mathcal{T}_{h}\right\}.
   $$
   The nodal interpolation is denoted by $\Pi_h^1:\vecb{V}\cap \vecb{C}^0(\Omega)\rightarrow \vecb{V}^1_h$. Moreover, we define
   the Raviart-Thomas interpolation $\Pi_h^R:\vecb{V}\rightarrow \vecb{V}_h^R$ and the
   orthogonal $L^2$ projection $P_h:W\rightarrow W_h$ by
   \begin{equation}\label{eq:pihr}
      ((\vecb{v}-\Pi_h^R\vecb{v})\cdot\vecb{n}_e,1)_e=0 \quad \forall e\in\mathscr{E},
   \end{equation}
   and
   \begin{equation} \label{propePh1}
      (r-P_hr,w_h)=0 \quad\forall w_h\in W_h,
   \end{equation}
   respectively. The following approximation and commutative properties {can be found in \cite{brenner2008mathematical,boffi2013mixed}}:
   \begin{align}
   	\label{propePi11}
   	\Vert\vecb{v}-\Pi_h^1\vecb{v}\Vert_T+h_T\vert\vecb{v}-\Pi_h^1\vecb{v}\vert_{1,T}&\leq Ch_T^2\vert\vecb{v}\vert_{2,T} \quad&&\forall T\in\mathcal{T}_h,\\
   	\nonumber \Vert\vecb{v}-\Pi_h^R\vecb{v}\Vert_{0,T} &\leq Ch_T\vert\vecb{v}\vert_{1,T} \quad &&\forall T\in\mathcal{T}_h,\\
   	\label{propePh2}
   	\Vert r-P_hr\Vert_{0,T}&\leq C{h_T^s}\vert r\vert_{s,T} \quad&&\forall T\in\mathcal{T}_h, \quad s=0,1,\\
   	\label{commute}
   	\nabla\cdot\Pi_h^R\vecb{v}&=P_h\nabla\cdot\vecb{v}.
   \end{align}
   By \cite[Lemma~2.1]{li2022low}, we know $\vecb{V}_h^1\cap\vecb{V}_h^R=\{\vecb{0}\}$.
   Then the $\vecb{P}_1\oplus\vecb{RT}_0$ finite element space $\vecb{V}_h$ is a direct sum of these two spaces, i.e., $\vecb{V}_{h}:=\vecb{V}_{h}^{1}\oplus\vecb{V}_h^R$.
   For any $\vecb{u}_h\in\vecb{V}_h$, it can be uniquely decomposed into $\vecb{u}_h^1+\vecb{u}_h^R$, where $\vecb{u}_h^1\in\vecb{V}_{h}^{1}$ and $\vecb{u}_h^R\in\vecb{V}_h^R$.
   We know that $\vecb{V}_h$ is an $\vecb{H}(\operatorname{div})$-conforming space and $\vecb{V}_h\times W_h$ is a divergence-free pair in the sense of \cite{john2017divergence}, i.e., $\nabla\cdot\vecb{V}_h=W_h$.
   Moreover, we define
   $$
      \begin{aligned}
         \vecb{V}_h^0: & =\left\{\vecb{v}_h\in \vecb{V}_h:(\nabla\cdot\vecb{v}_h,q_h)=0\quad\forall q_h\in W_h\right\} \\
                       & =\left\{\vecb{v}_h\in \vecb{V}_h:\nabla\cdot\vecb{v}_h=0\right\}.
      \end{aligned}
   $$

   \noindent\textbf{$\vecb{P}_1\oplus\vecb{RT}_0$ scheme 1.} Based on this element, we propose the following finite element scheme of (\ref{M1}):
   Find $\vecb{u}_h\in\vecb{V}_h$ such that
   \begin{equation}\label{mfM1} \tag{S1}
      {a_{h1}(\vecb{u}_{h},\vecb{v}_{h}):=}2\mu a_{h}(\vecb{u}_{h}, \vecb{v}_{h})+\lambda(\nabla\cdot\vecb{u}_{h},\nabla\cdot\vecb{v}_{h})=(\vecb{f}, \vecb{v}_{h}) \quad\forall\vecb{v}_{h}\in\vecb{V}_{h},
   \end{equation}
   where $a_{h}(\vecb{u}_{h}, \vecb{v}_{h})=a(\vecb{u}_{h}^{1}, \vecb{v}_{h}^{1})+a^{R}(\vecb{u}_{h}^{R}, \vecb{v}_{h}^{R})$.
   Let $\vecb{\psi}_e$ {denote} the Raviart-Thomas basis function {for the} face $e$ {such that} $\vecb{u}_h^R\in \vecb{V}_h^R$ can be rewritten as $\vecb{u}_h^R=\sum_{e\in\mathscr{E}^0}u_e\vecb{\psi}_e$. 
   Following \cite{li2022low}, $a^{R}(\cdot,\cdot)$ has three {choices} as follows
   \begin{align*}
      a^R(\vecb{u}_h^R, \vecb{v}_h^R) & =a^0(\vecb{u}_h^R, \vecb{v}_h^R):=\sum_{T\in\mathcal{T}_h}\alpha_T h_T^{-2}(\vecb{u}_h^R, \vecb{v}_h^R)_T,                                                                           \\
      a^R(\vecb{u}_h^R, \vecb{v}_h^R) & =a^D(\vecb{u}_h^R, \vecb{v}_h^R):=\sum_{T\in\mathcal{T}_h}\sum_{e\in \partial T\cap\mathscr{E}^0}\alpha_T h_T^{-2}u_ev_e(\vecb{\psi}_e,\vecb{\psi}_e)_T,                             \\
      a^R(\vecb{u}_h^R, \vecb{v}_h^R) & =a^{\mathrm{div}}(\vecb{u}_h^R, \vecb{v}_h^R):=\sum_{T\in \mathcal{T}_h}\sum_{e\in\partial T\cap\mathscr{E}^0}\alpha_T u_ev_e(\nabla\cdot\vecb{\psi}_e, \nabla\cdot\vecb{\psi}_e)_T.
   \end{align*}
   The three forms are spectrally equivalent (see \cite[Lemma 3.2]{li2022low}) and the parameters $\alpha_T$ are positive constants. Relevant proof can be found in \cite{li2022low}.
   For brevity, we choose $a^R=a^0$ in analysis.

   Analogously to the continuous setting, we can define the orthogonal complement of $\vecb{V}_h^0$ {with respect to} the bilinear form $a_h(\cdot,\cdot)$
   $$
      \vecb{V}_h^{\perp}:=\left\{\vecb{u}_h\in \vecb{V}_h: a_h(\vecb{u}_h,\vecb{v}_h)=0\quad\forall \vecb{v}_h\in \vecb{V}_h^0\right\}.
   $$

   \begin{remark}
      The construction of {the} $\vecb{P}_1\oplus \vecb{RT}_0$ element is similar to {the} Bernardi--Raugel element. Both of them are based on continuous linear polynomials and supplemented with some stable functions to satisfy the inf-sup condition. The difference is that {the} $\vecb{P}_1\oplus \vecb{RT}_0$ element uses lowest-order Raviart--Thomas edge functions, while the BR element uses quadratic bubble functions. Thus {the} $\vecb{P}_1\oplus\vecb{RT}_0\times W_h$ {pair} is a divergence-free and pressure-robust pair.
      And compared to \textbf{BR {scheme}} (\ref{SchemeBR}), in our scheme (\ref{mfM1}) the term with $\lambda$ does not require $L^2$ projection.
      Moreover, the proposed scheme is easy to implement, it does not involve any face integrals.
   \end{remark}

   We define a larger space $\vecb{V}(h):=\vecb{V}\oplus\vecb{V}_h^R$ for analysis. For all $\vecb{v}\in\vecb{V}(h)$ we similarly have the unique decomposition $\vecb{v}=\vecb{v}^1+\vecb{v}^R$, where $\vecb{v}^1\in \vecb{V}$ and $\vecb{v}^R\in \vecb{V}_h^R$. We define the following norms or seminorms on $\vecb{V}(h)$:
   $$
      \Vert\vecb{v}\Vert^2_R := a^R(\vecb{v}^R, \vecb{v}^R),\quad
      \Vert\vecb{v}\Vert^2_h := a_h\left(\vecb{v}, \vecb{v}\right),\quad
      {\Vert\vecb{v}\Vert^2_{h1} := a_{h1}\left(\vecb{v}, \vecb{v}\right)}.
   $$
   Because of Korn's inequality (\ref{eqKorn}), $\Vert\cdot\Vert_h$ and $\Vert\cdot\Vert_{h1}$ are two norms. 
   Moreover, we define the interpolation $\Pi_h:\vecb{V}\cap\vecb{C}^0(\bar{\Omega})\rightarrow\vecb{V}_h$ as
   \begin{equation} \label{ConstructPi}
      \Pi_h\vecb{v}:=\Pi_h^1\vecb{v}+\Pi_h^R(\vecb{v}-\Pi_h^1\vecb{v}).
   \end{equation}
   {Let $\Vert\cdot\Vert_{h,T}$ and $\Vert\cdot\Vert_{h1,T}$ be the elementwise counterparts of $\Vert\cdot\Vert_h$ and $\Vert\cdot\Vert_{h1}$, respectively, 
   such that $\Vert\cdot\Vert_h^2=\sum_{T\in \mathcal{T}_h}\Vert\cdot\Vert_{h,T}^2$ and $\Vert\cdot\Vert_{h1}^2=\sum_{T\in \mathcal{T}_h}\Vert\cdot\Vert_{h1,T}^2$.}
   It was proven in \cite{li2022low} that
   \begin{align} 
   	\label{propePi1}
    \nabla \cdot \Pi_h \boldsymbol{v}&=P_h \nabla \cdot \boldsymbol{v} \quad&&{\forall \vecb{v}\in \vecb{V}\cap\vecb{C}^0(\bar{\Omega})},\\
    \label{propePi2}
    \left\|\boldsymbol{v}-\Pi_h \boldsymbol{v}\right\|_T + h_T\Vert\boldsymbol{v}-\Pi_h \boldsymbol{v}\Vert_{h,T}&\leq Ch_T^2| \boldsymbol{v}|_{2,T}\quad&&{\forall \vecb{v}\in [H^2(T)]^d, T\in\mathcal{T}_h}.
   \end{align}
   Thus we obtain
	\begin{equation} \label{propePi3}
	\begin{aligned}
	\Vert\boldsymbol{v}-\Pi_h \boldsymbol{v}\Vert_{h1,T} & \leq (2\mu)^{\frac{1}{2}}\Vert\boldsymbol{v}-\Pi_h \boldsymbol{v}\Vert_{h,T} + \lambda^{\frac{1}{2}}\Vert\nabla\cdot(\boldsymbol{v}-\Pi_h \boldsymbol{v})\Vert_{0,T}      \\
	& \leq(2\mu)^{\frac{1}{2}}\Vert\boldsymbol{v}-\Pi_h \boldsymbol{v}\Vert_{h,T} + \lambda^{\frac{1}{2}}\Vert\nabla\cdot\boldsymbol{v}-P_h\nabla\cdot\boldsymbol{v}\Vert_{0,T} \\
	&\leq Ch_T({(2\mu)^{\frac{1}{2}}}\vert\boldsymbol{v}\vert_{2,T}+\lambda^{\frac{1}{2}}\vert\nabla\cdot\boldsymbol{v}\vert_{1,T}) \quad\forall \vecb{v}\in [H^2(\Omega)]^d.
	\end{aligned}
	\end{equation}
   \begin{lemma}[Inf-Sup Stability] \label{lemInfsup}
      There exists a positive constant $\beta_{is}$, {dependent on $\beta$, $\gamma$ and $\alpha_T$, $T\in\mathcal{T}_h$, but independent of $h$}, satisfying the inf-sup condition
      \begin{equation} \label{infsup}
         \sup_{\vecb{v}_h \in \vecb{V}_h\setminus\{\vecb{0}\}} \frac{(\nabla \cdot \vecb{v}_h, q_h)}{\Vert\vecb{v}\Vert_h} \geq \beta_{is}\left\|q_h\right\| \quad\forall q_h\in W_h,
      \end{equation}
      {where $\beta$ and $\gamma$ are the constants in \eqref{continuousinfsup} and \eqref{meshregularity}, respectively.}
      In addition, for all $q_h\in W_h$ there exists a unique $\vecb{u}_h^{\perp}\in\vecb{V}_h^{\perp}$ such that
      \begin{equation} \label{infsup2}
         \nabla\cdot\vecb{u}_h^{\perp}=q_h, \quad \Vert\vecb{u}_h^{\perp}\Vert_{h}\leq \beta_{is}^{-1}\Vert q_h\Vert.
      \end{equation}
   \end{lemma}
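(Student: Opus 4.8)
The plan is to construct a Fortin-type operator that, unlike the interpolation $\Pi_h$ in \eqref{ConstructPi}, is defined on all of $\vecb{V}$ rather than on continuous functions only. Given $q_h\in W_h\subset W$, the continuous inf-sup condition \eqref{continuousinfsup} (equivalently, surjectivity of $\nabla\cdot:\vecb{V}\to W$) furnishes $\vecb{w}\in\vecb{V}$ with $\nabla\cdot\vecb{w}=q_h$ and $\Vert\vecb{w}\Vert_1\le\beta^{-1}\Vert q_h\Vert$. Since $\vecb{w}$ need not be continuous, I would replace the nodal interpolant $\Pi_h^1$ by a quasi-interpolation operator $I_h:\vecb{V}\to\vecb{V}_h^1$ of Cl\'ement/Scott--Zhang type, which is $H^1$-stable, preserves homogeneous boundary values, and satisfies $\Vert\vecb{w}-I_h\vecb{w}\Vert_{0,T}+h_T\vert\vecb{w}-I_h\vecb{w}\vert_{1,T}\le Ch_T\vert\vecb{w}\vert_{1,\omega_T}$ with $C=C(\gamma)$ and $\omega_T$ the element patch. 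Then set $\vecb{v}_h:=I_h\vecb{w}+\Pi_h^R(\vecb{w}-I_h\vecb{w})$; since $\vecb{w}-I_h\vecb{w}$ has vanishing trace on $\partial\Omega$, hence vanishing normal moments on boundary faces, the second term lies in $\vecb{V}_h^R$, so $\vecb{v}_h\in\vecb{V}_h^1\oplus\vecb{V}_h^R=\vecb{V}_h$.

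Next I would verify the two defining properties of a Fortin operator. For the divergence, \eqref{commute} gives $\nabla\cdot\Pi_h^R(\vecb{w}-I_h\vecb{w})=P_h\nabla\cdot(\vecb{w}-I_h\vecb{w})$; because $\nabla\cdot I_h\vecb{w}$ is piecewise constant with zero mean, it already lies in $W_h$ and is fixed by $P_h$, whence $\nabla\cdot\vecb{v}_h=\nabla\cdot I_h\vecb{w}+P_h\nabla\cdot\vecb{w}-\nabla\cdot I_h\vecb{w}=P_hq_h=q_h$. For stability in $\Vert\cdot\Vert_h$, write $\vecb{v}_h=\vecb{v}_h^1+\vecb{v}_h^R$ with $\vecb{v}_h^1=I_h\vecb{w}$ and $\vecb{v}_h^R=\Pi_h^R(\vecb{w}-I_h\vecb{w})$; by $\vecb{V}_h^1\cap\vecb{V}_h^R=\{\vecb{0}\}$ this decomposition is the one defining $\Vert\cdot\Vert_h$, so $\Vert\vecb{v}_h\Vert_h^2=a(I_h\vecb{w},I_h\vecb{w})+a^0(\vecb{v}_h^R,\vecb{v}_h^R)\le\vert I_h\vecb{w}\vert_1^2+a^0(\vecb{v}_h^R,\vecb{v}_h^R)$. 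The first term is $\le C\Vert\vecb{w}\Vert_1^2$ by $H^1$-stability of $I_h$. For the second, using the $\vecb{RT}_0$ approximation estimate in \eqref{propePi11} and the quasi-interpolation estimate, $\Vert\vecb{v}_h^R\Vert_{0,T}\le\Vert\vecb{w}-I_h\vecb{w}\Vert_{0,T}+Ch_T\vert\vecb{w}-I_h\vecb{w}\vert_{1,T}\le Ch_T\vert\vecb{w}\vert_{1,\omega_T}$, so $\alpha_Th_T^{-2}\Vert\vecb{v}_h^R\Vert_{0,T}^2\le C\alpha_T\vert\vecb{w}\vert_{1,\omega_T}^2$; summing over $T$ with the finite-overlap property of the patches yields $a^0(\vecb{v}_h^R,\vecb{v}_h^R)\le C\,(\max_T\alpha_T)\,\vert\vecb{w}\vert_1^2$. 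Hence $\Vert\vecb{v}_h\Vert_h\le C(\gamma,\{\alpha_T\})\Vert\vecb{w}\Vert_1\le C(\gamma,\{\alpha_T\})\beta^{-1}\Vert q_h\Vert$. Using this $\vecb{v}_h$ as a competitor in the supremum gives $(\nabla\cdot\vecb{v}_h,q_h)/\Vert\vecb{v}_h\Vert_h=\Vert q_h\Vert^2/\Vert\vecb{v}_h\Vert_h\ge\beta_{is}\Vert q_h\Vert$ with $\beta_{is}:=\beta/C(\gamma,\{\alpha_T\})$, which is independent of $h$ and depends only on $\beta$, $\gamma$ and the $\alpha_T$, proving \eqref{infsup}.

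For \eqref{infsup2}, since $\Vert\cdot\Vert_h$ is a norm on $\vecb{V}_h$, $a_h(\cdot,\cdot)$ is an inner product and $\vecb{V}_h=\vecb{V}_h^0\oplus\vecb{V}_h^{\perp}$ is an $a_h$-orthogonal direct sum. As $\nabla\cdot$ vanishes on $\vecb{V}_h^0$ and $\nabla\cdot\vecb{V}_h=W_h$, its restriction to $\vecb{V}_h^{\perp}$ maps onto $W_h$; it is injective because any element of its kernel would lie in $\vecb{V}_h^0\cap\vecb{V}_h^{\perp}=\{\vecb{0}\}$, so it is a bijection. Given $q_h$, decompose the $\vecb{v}_h$ built above as $\vecb{v}_h=\vecb{v}_h^0+\vecb{v}_h^{\perp}$; then $\nabla\cdot\vecb{v}_h^{\perp}=q_h$, so $\vecb{u}_h^{\perp}:=\vecb{v}_h^{\perp}$ is the unique preimage, and by $a_h$-orthogonality $\Vert\vecb{u}_h^{\perp}\Vert_h\le\Vert\vecb{v}_h\Vert_h\le\beta_{is}^{-1}\Vert q_h\Vert$.

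The crux of the argument is the stability estimate in Step two, and specifically defeating the $h_T^{-2}$ weight built into the $\vecb{RT}_0$ form $a^0$: the naive choice $\vecb{v}_h=\Pi_h^R\vecb{w}$ satisfies the divergence constraint but has $\Vert\vecb{v}_h\Vert_R^2\sim\sum_T h_T^{-2}\Vert\vecb{w}\Vert_{0,T}^2$, which cannot be bounded by $\Vert\vecb{w}\Vert_1^2$. Subtracting the quasi-interpolant $I_h\vecb{w}$ first is precisely what makes the $\vecb{RT}_0$ correction $\mathcal{O}(h_T)$ in $L^2$ on each element, hence $\mathcal{O}(1)$ after the weight. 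A secondary technical point is that $\vecb{w}$ is merely in $\vecb{V}$, so the nodal interpolant appearing in \eqref{ConstructPi} must be traded for a boundary-condition-preserving Cl\'ement/Scott--Zhang operator; everything else (the divergence identity and the final duality argument) is routine.
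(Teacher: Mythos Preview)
Your proof is correct and self-contained. The paper, by contrast, does not redo the Fortin construction: it simply cites the inf-sup condition already established in \cite{li2022low} for the same pair but with the stronger norm $\normmm{\vecb{v}}^2=\Vert\nabla\vecb{v}^1\Vert^2+a^R(\vecb{v}^R,\vecb{v}^R)$, and then observes that $\Vert\vecb{\epsilon}(\vecb{v}^1)\Vert\le\Vert\nabla\vecb{v}^1\Vert$ implies $\Vert\vecb{v}\Vert_h\le\normmm{\vecb{v}}$, so the supremum with $\Vert\cdot\Vert_h$ in the denominator can only be larger. For \eqref{infsup2} the paper again just cites the standard closed-range consequence (e.g.\ \cite[Lemma~3.12]{john2016finite}) together with $\nabla\cdot\vecb{V}_h=W_h$. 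Your Fortin argument with a Scott--Zhang quasi-interpolant in place of $\Pi_h^1$ is essentially what underlies the cited result in \cite{li2022low}; the key insight you identify---that subtracting the $H^1$-stable quasi-interpolant before applying $\Pi_h^R$ is exactly what makes the $\vecb{RT}_0$ correction $\mathcal{O}(h_T)$ in $L^2$ and hence bounded after the $h_T^{-2}$ weight in $a^0$---is the substantive step, and you handle it cleanly. The trade-off is that the paper's route is two lines long but opaque without the reference, whereas yours is a full page but stands on its own and makes the dependence on $\beta$, $\gamma$, and $\max_T\alpha_T$ explicit.
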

   \begin{proof}
      A similar inf-sup condition has already been proven in \cite{li2022low}:
      \begin{align*}
         \sup_{\vecb{v}_h\in\vecb{V}_h\setminus\{\vecb{0}\}}\frac{(\nabla\cdot\vecb{v}_h, q_h)}{|||\vecb{v}_h|||}\geq\beta_{is}\Vert q_h\Vert \quad\forall q_h\in W_h,
      \end{align*}
      where $|||\vecb{v}|||^2:=\Vert\nabla\vecb{v}^1\Vert^2+a^R(\vecb{v}^R,\vecb{v}^R)$ is a norm defined on $\vecb{V}(h)$. Then (\ref{infsup}) follows from the fact that $\|\vecb{\epsilon}(\vecb{v}^1)\|\leq \|\nabla\vecb{v}^1\|$ for any $\vecb{v}^1\in \vecb{V}$.
      Further, since $\nabla\cdot\vecb{V}_h=W_h$, the statement concerning \eqref{infsup2} is a direct consequence of \cite[Lemma~3.12]{john2016finite}. This completes the proof.

      
   \end{proof}

   \begin{lemma} \label{ThUnique}
      The numerical scheme (\ref{mfM1}) has unique solution $\vecb{u}_h\in\vecb{V}_h$.
   \end{lemma}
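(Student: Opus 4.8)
The plan is to establish existence and uniqueness for the finite-dimensional linear system (\ref{mfM1}) by the standard route: since $\vecb{V}_h$ is finite-dimensional and the system is square, it suffices to prove that the only solution of the homogeneous problem is $\vecb{u}_h=\vecb{0}$, or equivalently that the bilinear form $a_{h1}(\cdot,\cdot)$ is coercive on $\vecb{V}_h$ with respect to the norm $\Vert\cdot\Vert_{h1}$. In fact coercivity is immediate from the very definition $\Vert\vecb{v}_h\Vert_{h1}^2 = a_{h1}(\vecb{v}_h,\vecb{v}_h)$, so the form is trivially coercive with constant $1$; the real content is that $\Vert\cdot\Vert_{h1}$ is genuinely a norm on $\vecb{V}_h$, which has already been recorded in the excerpt as a consequence of Korn's inequality (\ref{eqKorn}) together with the fact that $\vecb{V}_h^1\cap\vecb{V}_h^R=\{\vecb{0}\}$.

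First I would observe that $a_{h1}(\cdot,\cdot)$ is a bounded, symmetric, and positive-semidefinite bilinear form on the finite-dimensional space $\vecb{V}_h$, and that the right-hand side $\vecb{v}_h\mapsto(\vecb{f},\vecb{v}_h)$ is a bounded linear functional. Next I would argue that $a_{h1}$ is in fact positive-definite: if $a_{h1}(\vecb{v}_h,\vecb{v}_h)=0$ then, decomposing $\vecb{v}_h=\vecb{v}_h^1+\vecb{v}_h^R$, the $a_h$-part forces $a(\vecb{v}_h^1,\vecb{v}_h^1)=\Vert\vecb{\epsilon}(\vecb{v}_h^1)\Vert^2=0$ and $a^R(\vecb{v}_h^R,\vecb{v}_h^R)=\sum_T \alpha_T h_T^{-2}\Vert\vecb{v}_h^R\Vert_T^2=0$. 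The latter gives $\vecb{v}_h^R=\vecb{0}$ directly since all $\alpha_T>0$. For the former, $\Vert\vecb{\epsilon}(\vecb{v}_h^1)\Vert=0$ with $\vecb{v}_h^1\in\vecb{V}=[H_0^1(\Omega)]^d$ implies $\vecb{v}_h^1=\vecb{0}$ by Korn's inequality (\ref{eqKorn}) (equivalently, $\vecb{v}_h^1$ is an infinitesimal rigid motion vanishing on $\partial\Omega$, hence zero). Therefore $\vecb{v}_h=\vecb{0}$, so $\Vert\cdot\Vert_{h1}$ is a norm and $a_{h1}$ is coercive on $\vecb{V}_h$.

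Then the conclusion follows from the Lax--Milgram lemma applied in the finite-dimensional Hilbert space $(\vecb{V}_h,\Vert\cdot\Vert_{h1})$: boundedness of $a_{h1}$ and of the load functional, together with coercivity, yield a unique $\vecb{u}_h\in\vecb{V}_h$ solving (\ref{mfM1}). Alternatively, since the problem is a square linear system, uniqueness (the homogeneous problem has only the trivial solution, which is exactly the positive-definiteness just shown) is equivalent to existence, and one may simply invoke this rank--nullity argument instead of Lax--Milgram.

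I do not anticipate a genuine obstacle here; the statement is essentially a bookkeeping consequence of results already in place. The only point requiring a moment's care is making explicit why $\Vert\cdot\Vert_{h1}$ (or $\Vert\cdot\Vert_h$) separates points on $\vecb{V}_h$, i.e. the interplay of the direct-sum decomposition $\vecb{V}_h=\vecb{V}_h^1\oplus\vecb{V}_h^R$ with Korn's inequality on the conforming part and the positivity of the $h_T^{-2}$-weighted mass term on the $\vecb{RT}_0$ part; this is already asserted in the text following the definition of the norms, so the proof can cite it directly and be quite short.
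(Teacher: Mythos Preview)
Your proposal is correct and follows essentially the same approach as the paper: the paper's proof simply records the identity $a_{h1}(\vecb{v}_h,\vecb{v}_h)=\Vert\vecb{v}_h\Vert_{h1}^2$ (coercivity) and the Cauchy--Schwarz bound $a_{h1}(\vecb{u}_h,\vecb{v}_h)\leq\Vert\vecb{u}_h\Vert_{h1}\Vert\vecb{v}_h\Vert_{h1}$ (boundedness), then concludes; the fact that $\Vert\cdot\Vert_{h1}$ is a genuine norm is already asserted in the text via Korn's inequality, exactly as you cite. Your version is just a more explicit unpacking of the same argument.
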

   \begin{proof}
      It is trivial to prove
      \begin{align} 
         a_{h1}(\vecb{v}_h,\vecb{v}_h)&=\Vert\vecb{v}_h\Vert^2_{h1} \hspace{-2cm} &&{\quad\forall\vecb{v}_h\in\vecb{V}_h}, \label{eqAcoer}\\
         a_{h1}(\vecb{u}_h,\vecb{v}_h)&\leq\Vert\vecb{u}_h\Vert_{h1}\Vert\vecb{v}_h\Vert_{h1} \hspace{-2cm} &&{\quad\forall \vecb{u}_h,\vecb{v}_h\in\vecb{V}_h}.\label{eqAbound}
      \end{align}
      So the unique solvability of (\ref{mfM1}) is established.
   \end{proof}

   \begin{theorem} \label{ThHomoGradRob}
      The finite element scheme (\ref{mfM1}) is gradient-robust in the sense of Definition \ref{gradRobDef}. 
      If the {right-hand side equals} $\vecb{f}=\nabla\phi$ for some $\phi\in H^1(\Omega)$, 
      then the solution $\vecb{u}_h=\vecb{u}_h^{0}+\vecb{u}_h^{\perp}\in \vecb{V}_h^0\oplus \vecb{V}_h^\perp$ of (\ref{mfM1}) satisfies
      \begin{equation} \label{gradRobTh}
         \vecb{u}_h^0=\vecb{0},\quad\Vert\vecb{u}_h^{\perp}\Vert_h \leq {\frac{\sqrt{2}\max\{1,C_{inv}/\sqrt{\alpha}\}}{2\mu+\lambda \beta_{is}^2}}\Vert\phi\Vert,
      \end{equation}
      where $\alpha:=\min_{T\in\mathcal{T}_h} \alpha_T$ and $C_{inv}$ is a constant such that the inverse estimate $\|\nabla\cdot\vecb{v}^R\|_T\leq C_{inv} h_T^{-1} \|\vecb{v}^R\|_T$ holds for any
      $\vecb{v}^R\in \vecb{V}_h^R$ and $T\in \mathcal{T}_h$.
   \end{theorem}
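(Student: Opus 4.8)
The plan is to mirror the continuous argument in the lemma following Lemma~\ref{lemStab}, but now in the discrete $\vecb{P}_1\oplus\vecb{RT}_0$ setting, using the key structural facts already established: the divergence-free pair property $\nabla\cdot\vecb{V}_h=W_h$, the $a_h$-orthogonal splitting $\vecb{V}_h=\vecb{V}_h^0\oplus\vecb{V}_h^\perp$, the commuting relation \eqref{commute}/\eqref{propePi1}, the discrete inf-sup bound \eqref{infsup2}, and the inverse estimate for $\vecb{RT}_0$ with constant $C_{inv}$. First I would show $\vecb{u}_h^0=\vecb{0}$: take an arbitrary $\vecb{v}_h\in\vecb{V}_h^0$ as test function in \eqref{mfM1}. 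Since $\nabla\cdot\vecb{v}_h=0$, the $\lambda$-term drops, so $2\mu a_h(\vecb{u}_h,\vecb{v}_h)=(\nabla\phi,\vecb{v}_h)$. The right-hand side must be rewritten by integration by parts; this is where one must be careful, because $\vecb{v}_h=\vecb{v}_h^1+\vecb{v}_h^R$ and only the $\vecb{RT}_0$ part is genuinely $\vecb{H}(\mathrm{div})$ with vanishing normal trace — but $\vecb{v}_h^1\in[H^1_0(\Omega)]^d$ is also in $\vecb{H}_0(\mathrm{div};\Omega)$, so in fact $\vecb{v}_h\in\vecb{H}_0(\mathrm{div};\Omega)$ and $(\nabla\phi,\vecb{v}_h)=-(\phi,\nabla\cdot\vecb{v}_h)=0$. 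Hence $a_h(\vecb{u}_h,\vecb{v}_h)=0$ for all $\vecb{v}_h\in\vecb{V}_h^0$, i.e.\ $\vecb{u}_h\in\vecb{V}_h^\perp$; combined with the decomposition $\vecb{u}_h=\vecb{u}_h^0+\vecb{u}_h^\perp$ this forces $\vecb{u}_h^0=\vecb{0}$.

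Next I would test \eqref{mfM1} with $\vecb{v}_h=\vecb{u}_h=\vecb{u}_h^\perp$. This gives
\begin{equation*}
\Vert\vecb{u}_h^\perp\Vert_{h1}^2 = 2\mu a_h(\vecb{u}_h^\perp,\vecb{u}_h^\perp)+\lambda\Vert\nabla\cdot\vecb{u}_h^\perp\Vert^2 = (\nabla\phi,\vecb{u}_h^\perp) = -(\phi,\nabla\cdot\vecb{u}_h^\perp) \le \Vert\phi\Vert\,\Vert\nabla\cdot\vecb{u}_h^\perp\Vert,
\end{equation*}
using again that $\vecb{u}_h^\perp\in\vecb{H}_0(\mathrm{div};\Omega)$. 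Now from \eqref{infsup2} applied with $q_h=\nabla\cdot\vecb{u}_h^\perp\in W_h$ (note $\vecb{u}_h^\perp$ is the unique preimage), we have $\Vert\vecb{u}_h^\perp\Vert_h\le\beta_{is}^{-1}\Vert\nabla\cdot\vecb{u}_h^\perp\Vert$, equivalently $\Vert\nabla\cdot\vecb{u}_h^\perp\Vert\ge\beta_{is}\Vert\vecb{u}_h^\perp\Vert_h$. I also need to bound $\Vert\nabla\cdot\vecb{u}_h^\perp\Vert$ from above by $\Vert\vecb{u}_h^\perp\Vert_h$ up to the constant $\max\{1,C_{inv}/\sqrt\alpha\}$: writing $\vecb{u}_h^\perp=\vecb{w}^1+\vecb{w}^R$, one has $\nabla\cdot\vecb{u}_h^\perp=\nabla\cdot\vecb{w}^1+\nabla\cdot\vecb{w}^R$; the $\vecb{P}_1$ contribution is controlled by $\Vert\vecb\epsilon(\vecb{w}^1)\Vert$ (since $\Vert\nabla\cdot\vecb{w}^1\Vert\le\sqrt d\,\Vert\vecb\epsilon(\vecb{w}^1)\Vert$, though actually a cleaner route keeps $\Vert\nabla\cdot\vecb{u}_h^\perp\Vert$ as the quantity tested against), and the $\vecb{RT}_0$ contribution by the stated inverse estimate $\Vert\nabla\cdot\vecb{w}^R\Vert_T\le C_{inv}h_T^{-1}\Vert\vecb{w}^R\Vert_T$ together with $a^0(\vecb{w}^R,\vecb{w}^R)=\sum_T\alpha_T h_T^{-2}\Vert\vecb{w}^R\Vert_T^2\ge\alpha\sum_T h_T^{-2}\Vert\vecb{w}^R\Vert_T^2$. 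Assembling, $\Vert\nabla\cdot\vecb{u}_h^\perp\Vert\le\sqrt2\,\max\{1,C_{inv}/\sqrt\alpha\}\,\Vert\vecb{u}_h^\perp\Vert_h$ (the $\sqrt2$ from combining the two squared pieces).

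Finally I would combine the three inequalities. From the energy identity, $2\mu a_h(\vecb{u}_h^\perp,\vecb{u}_h^\perp)+\lambda\Vert\nabla\cdot\vecb{u}_h^\perp\Vert^2\le\Vert\phi\Vert\,\Vert\nabla\cdot\vecb{u}_h^\perp\Vert$; lower-bounding the left side by $2\mu\Vert\vecb{u}_h^\perp\Vert_h^2/C^2 + \lambda\beta_{is}^2\Vert\vecb{u}_h^\perp\Vert_h^2$ where $C=\sqrt2\max\{1,C_{inv}/\sqrt\alpha\}$ (using $\Vert\vecb\epsilon(\vecb{w}^1)\Vert^2+\Vert\vecb{w}^R\Vert_R^2=\Vert\vecb{u}_h^\perp\Vert_h^2$ and the bound just derived, plus $\Vert\nabla\cdot\vecb{u}_h^\perp\Vert\ge\beta_{is}\Vert\vecb{u}_h^\perp\Vert_h$ on the $\lambda$-term), and upper-bounding the right side by $\Vert\phi\Vert\cdot C\Vert\vecb{u}_h^\perp\Vert_h$, then dividing by $\Vert\vecb{u}_h^\perp\Vert_h$ yields
\begin{equation*}
\Bigl(\tfrac{2\mu}{C^2}+\lambda\beta_{is}^2\Bigr)\Vert\vecb{u}_h^\perp\Vert_h\le C\Vert\phi\Vert,
\end{equation*}
i.e.\ $\Vert\vecb{u}_h^\perp\Vert_h\le\dfrac{C^2}{2\mu+\lambda\beta_{is}^2 C^2}\Vert\phi\Vert$; a slight rearrangement of constants gives the stated form $\dfrac{\sqrt2\max\{1,C_{inv}/\sqrt\alpha\}}{2\mu+\lambda\beta_{is}^2}\Vert\phi\Vert$ after absorbing $C^2$ appropriately (one should double-check the exact placement of $C$ versus $C^2$ in the denominator against the intended statement — this bookkeeping of constants is the only delicate point). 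Since $\vecb{u}_h^0=\vecb{0}$, $\Vert\vecb{u}_h\Vert_{1,h}$ is equivalent to $\Vert\vecb{u}_h^\perp\Vert_h$, so the bound is $\mathcal{O}(\lambda^{-1})$ and gradient-robustness follows. The main obstacle I anticipate is not conceptual but the careful tracking of the equivalence constants between $\Vert\nabla\cdot\vecb{u}_h^\perp\Vert$, $\Vert\vecb{u}_h^\perp\Vert_h$, and the two-norm on the direct sum so that the final constant matches exactly what is claimed; the structural steps (dropping the $\lambda$-term on $\vecb{V}_h^0$, integration by parts valid because $\vecb{V}_h\subset\vecb{H}_0(\mathrm{div};\Omega)$, and the inf-sup lower bound) are straightforward given the earlier lemmas.
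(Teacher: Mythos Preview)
Your approach is essentially the same as the paper's: show $\vecb{u}_h^0=\vecb{0}$ via integration by parts (the paper tests directly with $\vecb{u}_h^0$ and uses $a_h$-orthogonality, you test with arbitrary $\vecb{v}_h\in\vecb{V}_h^0$; both are fine), then test with $\vecb{u}_h^\perp$, bound $\|\nabla\cdot\vecb{u}_h^\perp\|$ above by $C\|\vecb{u}_h^\perp\|_h$ with $C=\sqrt{2}\max\{1,C_{inv}/\sqrt{\alpha}\}$, and below by $\beta_{is}\|\vecb{u}_h^\perp\|_h$ via the inf-sup estimate.

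The only slip is in the final assembly. You write the left side as $2\mu\|\vecb{u}_h^\perp\|_h^2/C^2+\lambda\beta_{is}^2\|\vecb{u}_h^\perp\|_h^2$, but there is no need to invoke the divergence bound on the $2\mu$ term: by definition $a_h(\vecb{u}_h^\perp,\vecb{u}_h^\perp)=\|\vecb{u}_h^\perp\|_h^2$ exactly. So the left side is already $2\mu\|\vecb{u}_h^\perp\|_h^2+\lambda\|\nabla\cdot\vecb{u}_h^\perp\|^2\ge(2\mu+\lambda\beta_{is}^2)\|\vecb{u}_h^\perp\|_h^2$, and the upper bound $\|\nabla\cdot\vecb{u}_h^\perp\|\le C\|\vecb{u}_h^\perp\|_h$ is applied only to the right side $\|\phi\|\,\|\nabla\cdot\vecb{u}_h^\perp\|$. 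This gives $(2\mu+\lambda\beta_{is}^2)\|\vecb{u}_h^\perp\|_h\le C\|\phi\|$ directly, which is exactly the stated constant --- no ``absorbing $C^2$'' is needed. You already flagged this as the delicate bookkeeping step; the fix is simply to drop the spurious $1/C^2$.
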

   \begin{proof}
      Considering $\vecb{v}_h=\vecb{u}_h^0$ in (\ref{mfM1}), by the $a_h(\cdot,\cdot)$-orthogonality, and integrating by parts for the right hand side we get
      \begin{align*}
         2\mu a_h(\vecb{u}_h^0,\vecb{u}_h^0)=(\nabla\phi,\vecb{u}_h^0)=-(\phi,\nabla\cdot\vecb{u}_h^0)=0,
      \end{align*}
     which implies $\vecb{u}_h^0=\vecb{0}$.
      Considering $\vecb{v}_h=\vecb{u}_h^{\perp}$ in (\ref{mfM1}), {it follows}
      \begin{align*}
         2\mu a_h(\vecb{u}_h^{\perp},\vecb{u}_h^{\perp})+\lambda(\nabla\cdot\vecb{u}_h^{\perp},\nabla\cdot\vecb{u}_h^{\perp})=(\nabla\phi,\vecb{u}_h^{\perp})=
         -(\phi,\nabla\cdot\vecb{u}_h^{\perp})\leq \Vert\phi\Vert\Vert\nabla\cdot\vecb{u}_h^{\perp}\Vert.
      \end{align*}
      By an inverse estimate and a triangle inequality, it holds for any $\vecb{v}\in \vecb{V}(h)$ that
      \begin{align*}
         \|\nabla\cdot\vecb{v}\|&\leq \|\nabla\cdot\vecb{v}^1\| + \|\nabla\cdot\vecb{v}^R\|
         \leq \|\vecb{\epsilon}(\vecb{v}^1)\| + (C_{inv}/\sqrt{\alpha})\|\vecb{v}\|_R \\
         &\leq \max\{1,C_{inv}/\sqrt{\alpha}\}(\|\vecb{\epsilon}(\vecb{v}^1)\|+\|\vecb{v}\|_R)\leq \sqrt{2} \max\{1,C_{inv}/\sqrt{\alpha}\} \|\vecb{v}\|_h.
      \end{align*}
      Lemma \ref{lemInfsup} implies that
      \begin{align*}
         \Vert\vecb{u}_h^{\perp}\Vert_h\leq\beta_{is}^{-1}\Vert\nabla\cdot\vecb{u}_h^{\perp}\Vert,
      \end{align*}
      which, together with the above two estimates, yields
      \begin{align*}
         (2\mu + \lambda \beta_{is}^2) \|\vecb{u}_h^\perp\|_h^2 \leq \sqrt{2} \max\{1,C_{inv}/\sqrt{\alpha}\} \|\phi\|\|\vecb{u}_h^\perp\|_h.
      \end{align*}
      Then \cref{gradRobTh} follows. This completes the proof.
   \end{proof}
   \begin{remark}
      The {scheme} (\ref{mfM1}) satisfies both locking-free and gradient-robust properties.
      The reconstruction operator in \textbf{CR} {scheme} (\ref{SchemeCR}) is used to map discretely divergence-free functions to divergence-free functions, while the {scheme} (\ref{mfM1}) does not require reconstructing.
   \end{remark}

   Next we analyze the consistency error caused by this $\vecb{H}(\mathrm{div})$-conforming $\vecb{P}_1\oplus \vecb{RT}_0$ element.
   Let $\vecb{u}$ be the true solution of \cref{M1}. The consistency error is {denoted} by
   $$\delta_{h1}(\vecb{u},\vecb{v}):=(\vecb{f},\vecb{v})-a_{h1}(\vecb{u},\vecb{v}).$$
   Note that $a_{h1}(\vecb{u},\vecb{v})=2\mu a(\vecb{u},\vecb{v}^1)
      +\lambda(\nabla\cdot\vecb{u},\nabla\cdot\vecb{v})$ and
   $(\vecb{f},\vecb{v})=\left( -\nabla\cdot(2\mu\vecb{\epsilon}(\vecb{u})
         +\lambda(\nabla\cdot\vecb{u})\mathbf{I}),\vecb{v} \right)$ for all
   $\vecb{v}\in \vecb{V}(h)$.
   {Integrating by parts for the $2\mu a(\vecb{u},\vecb{v}^1)$ term,} one obtains the consistency error:
   \begin{equation} \label{consiErr}
      \begin{aligned}
         \left|\delta_{h1}(\vecb{u},\vecb{v})\right| & = \left| (-2\mu\nabla\cdot\vecb{\epsilon}(\vecb{u}),\vecb{v}^R)\right| \leq 2\mu\sum_{T\in\mathcal{T}_h} h_T|\vecb{u}|_{2,T} h_T^{-1}\Vert\vecb{v}^R\Vert_{0,T} \\
         &\leq 2\mu\left(\sum_{T\in\mathcal{T}_h}h_T^2\vert\vecb{u}\vert_{2,T}^2\right)^{1/2}
         \left(\sum_{T\in\mathcal{T}_h}h_T^{-2}\Vert\vecb{v}^R\Vert_{0,T}^2\right)^{1/2} \\
         &\leq {(2\mu/\sqrt{\alpha})}h|\vecb{u}|_2\Vert\vecb{v}\Vert_{R}\leq{(2\mu/\sqrt{\alpha})}h|\vecb{u}|_2\Vert\vecb{v}\Vert_{h}.
      \end{aligned}
   \end{equation}
   {Here $\alpha$ is the same as in Theorem~\ref{ThHomoGradRob}}.

   \section{Error estimates} \label{Section4}
   \begin{theorem} \label{ThHomoErrorOrder}
      Let $\vecb{u}$ be the solution of (\ref{M1weak}) and $\vecb{u}_h$ be the solution of (\ref{mfM1}). Then assuming $\vecb{u}\in [H^2(\Omega)]^d$ and $\vecb{f}\in[L^2(\Omega)]^d$, we have the following error estimates:
      \begin{align}
         \Vert\vecb{u}-\vecb{u}_h\Vert_{h} &\leq C h\vert\vecb{u}\vert_2,\label{ieq:estimate1}\\
         \Vert\vecb{u}-\vecb{u}_h\Vert_{h1} &\leq Ch({(2\mu)^{\frac{1}{2}}}\vert\vecb{u}\vert_2+\lambda^{\frac{1}{2}}\vert\nabla\cdot\vecb{u}\vert_{1}).  \label{ieq:estimate2}
      \end{align}
      {If additionally $\Omega$ is convex and $d=2$}, one further has 
      \begin{equation*}
         \Vert\vecb{u}-\vecb{u}_h\Vert_{h1} \leq C h{((2\mu)^\frac{1}{2}+\lambda^{-\frac{1}{2}})}\Vert\vecb{f}\Vert.
      \end{equation*}
      In all estimates the constants $C$ are independent of $\lambda$, {$\mu$} and $h$.
   \end{theorem}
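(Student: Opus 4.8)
The plan is a standard Céa-plus-consistency argument built on the interpolant $\Pi_h$ of \eqref{ConstructPi}, in which the commuting property \eqref{propePi1} does the crucial work of removing every $\lambda$-growth. Set $\vecb{\eta}:=\vecb{u}-\Pi_h\vecb{u}$ and $\vecb{e}_h:=\Pi_h\vecb{u}-\vecb{u}_h\in\vecb{V}_h$, so that $\vecb{u}-\vecb{u}_h=\vecb{\eta}+\vecb{e}_h$. Using $a_{h1}(\vecb{u}_h,\vecb{v}_h)=(\vecb{f},\vecb{v}_h)$ together with $\delta_{h1}(\vecb{u},\vecb{v}_h)=(\vecb{f},\vecb{v}_h)-a_{h1}(\vecb{u},\vecb{v}_h)$, I obtain the error identity $a_{h1}(\vecb{e}_h,\vecb{v}_h)=-a_{h1}(\vecb{\eta},\vecb{v}_h)-\delta_{h1}(\vecb{u},\vecb{v}_h)$ for all $\vecb{v}_h\in\vecb{V}_h$. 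For \eqref{ieq:estimate1} I test with $\vecb{v}_h=\vecb{e}_h$; on the left, \eqref{eqAcoer} and the splitting $\Vert\cdot\Vert_{h1}^2=2\mu\Vert\cdot\Vert_h^2+\lambda\Vert\nabla\cdot(\cdot)\Vert^2$ give $2\mu\Vert\vecb{e}_h\Vert_h^2\le\Vert\vecb{e}_h\Vert_{h1}^2$. On the right, the decisive point is that $\nabla\cdot\vecb{\eta}=\nabla\cdot\vecb{u}-P_h\nabla\cdot\vecb{u}$ by \eqref{propePi1} is $L^2$-orthogonal to $\nabla\cdot\vecb{e}_h\in W_h$; hence the $\lambda$-part of $a_{h1}(\vecb{\eta},\vecb{e}_h)$ vanishes, so $a_{h1}(\vecb{\eta},\vecb{e}_h)=2\mu\,a_h(\vecb{\eta},\vecb{e}_h)$ and $|a_{h1}(\vecb{\eta},\vecb{e}_h)|\le 2\mu\Vert\vecb{\eta}\Vert_h\Vert\vecb{e}_h\Vert_h$. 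Combining with the consistency bound \eqref{consiErr} and dividing by $2\mu\Vert\vecb{e}_h\Vert_h$ yields $\Vert\vecb{e}_h\Vert_h\le\Vert\vecb{\eta}\Vert_h+\alpha^{-1/2}h|\vecb{u}|_2$; a triangle inequality and the interpolation estimate \eqref{propePi2} then give \eqref{ieq:estimate1}.

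For \eqref{ieq:estimate2} I run the same computation in the full $\Vert\cdot\Vert_{h1}$-norm. Testing again with $\vecb{v}_h=\vecb{e}_h$ and using the vanishing of the $\lambda$-term, $\Vert\vecb{e}_h\Vert_{h1}^2\le 2\mu\Vert\vecb{\eta}\Vert_h\Vert\vecb{e}_h\Vert_h+|\delta_{h1}(\vecb{u},\vecb{e}_h)|$. I then apply $(2\mu)^{1/2}\Vert\vecb{e}_h\Vert_h\le\Vert\vecb{e}_h\Vert_{h1}$ to both terms on the right — to the second one together with \eqref{consiErr}, which absorbs the remaining factor $(2\mu)^{1/2}$ — to get $\Vert\vecb{e}_h\Vert_{h1}\le(2\mu)^{1/2}\Vert\vecb{\eta}\Vert_h+(2\mu)^{1/2}\alpha^{-1/2}h|\vecb{u}|_2$. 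A triangle inequality, the bound $(2\mu)^{1/2}\Vert\vecb{\eta}\Vert_h\le C(2\mu)^{1/2}h|\vecb{u}|_2$ from \eqref{propePi2}, and the interpolation estimate \eqref{propePi3} for $\Vert\vecb{\eta}\Vert_{h1}$ produce \eqref{ieq:estimate2}. I stress that the $\lambda$-robustness (locking-freeness) here is entirely due to \eqref{propePi1} killing the $\lambda$-part of the interpolation term $a_{h1}(\vecb{\eta},\vecb{e}_h)$; this is the main structural idea rather than a technical hurdle — all remaining steps are routine Cauchy--Schwarz and triangle inequalities.

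Finally, for the convex planar estimate I would pass to the solid-pressure reformulation \eqref{M1Change} with $p:=\lambda\nabla\cdot\vecb{u}$ and invoke the $\lambda$-uniform elliptic regularity for nearly incompressible linear elasticity on a convex polygon (as established for planar elasticity, see \cite{brenner1992linear}): on such a domain $\vecb{u}\in[H^2(\Omega)]^2$ and there is a bound of the form $(2\mu)^{1/2}|\vecb{u}|_2+\lambda^{1/2}|\nabla\cdot\vecb{u}|_1\le C\bigl((2\mu)^{1/2}+\lambda^{-1/2}\bigr)\Vert\vecb{f}\Vert$, where the $\lambda^{-1/2}$ factor arises because $|\nabla\cdot\vecb{u}|_1=\lambda^{-1}|p|_1$ while $|p|_1$ is controlled by $\Vert\vecb{f}\Vert$ independently of $\lambda$. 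Substituting this into \eqref{ieq:estimate2} gives the stated bound. The genuinely delicate ingredient is precisely this last regularity statement, and it is where the hypotheses $d=2$ and convexity of $\Omega$ are used: the analogous $H^2$-regularity with a $\lambda$-independent constant fails on general convex polyhedra, which is why \eqref{ieq:estimate1}--\eqref{ieq:estimate2} hold for all $d$ but the $\Vert\vecb{f}\Vert$-version is restricted to the convex two-dimensional case.
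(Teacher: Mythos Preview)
Your proposal is correct and follows essentially the same route as the paper: split $\vecb{u}-\vecb{u}_h$ through the interpolant $\Pi_h\vecb{u}$, use the commuting property \eqref{propePi1} to kill the $\lambda$-term $(\nabla\cdot\vecb{\eta},\nabla\cdot\vecb{e}_h)$, combine Cauchy--Schwarz on $a_h$ with the consistency bound \eqref{consiErr}, and finish with \eqref{propePi2}--\eqref{propePi3} and the regularity \eqref{M1regular}. Apart from a harmless sign convention ($\vecb{e}_h=-\xi_u$) and slightly different phrasing of the final regularity step, your argument and the paper's coincide.
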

   \begin{proof}
      First, {we split the error into}
      $$
         \vecb{u}-\vecb{u}_h=\vecb{u}-\Pi_h\vecb{u}-(\vecb{u}_h-\Pi_h\vecb{u}):=\eta_{u}-\xi_{u}.
      $$
      Subtracting (\ref{mfM1}) from (\ref{M1weak}), we get the following error equation
      \begin{equation} \label{errorEquation}
         2\mu a_h(\vecb{u}-\vecb{u}_h,\vecb{v}_h) + \lambda(\nabla\cdot(\vecb{u}-\vecb{u}_h),\nabla\cdot\vecb{v}_h)=-\delta_{h1}(\vecb{u},\vecb{v}_h) {\quad\forall \vecb{v}_h\in\vecb{V}_h}.
      \end{equation}
      Setting $\vecb{v}_h=\xi_u$, we get
      \begin{equation} \label{errorEquation2}
         2\mu a_{h}(\xi_u,\xi_u)+\lambda(\nabla\cdot\xi_u,\nabla\cdot\xi_u) = 2\mu a_{h}(\eta_u,\xi_u)+\lambda(\nabla\cdot\eta_u,\nabla\cdot\xi_u)-\delta_{h1}(\vecb{u},\xi_u).
      \end{equation}
      A combination of {\cref{propePh1}}, \cref{propePi1} and the fact that $\nabla\cdot\xi_u\in W_h$ implies that $(\nabla\cdot\eta_u,\nabla\cdot\xi_u)=0$.
      Then it follows from the coercivity and the boundedness of $a_{h}(\cdot,\cdot)$, and the consistency error (\ref{consiErr}) that
      {
      \begin{align*}
      	2\mu\Vert\xi_u\Vert_{h}^2 \leq 2\mu\Vert\eta_u\Vert_{h}\Vert\xi_u\Vert_{h} + (2\mu/\sqrt{\alpha}) h\vert\vecb{u}\vert_2\Vert\xi_u\Vert_{h},
      \end{align*}
      which implies
      \begin{align*}
      	\Vert\xi_u\Vert_{h}\leq\Vert\eta_u\Vert_{h} + (\sqrt{\alpha})^{-1}h\vert\vecb{u}\vert_2.
      \end{align*}
      }
     {The estimate} \cref{ieq:estimate1} follows from a combination of the above inequality, the triangle inequality and the interpolation error with respect to $\Vert\cdot\Vert_{h}$.
     
     Let us prove \cref{ieq:estimate2}. 
     Note that $(2\mu)^{\frac{1}{2}}\Vert\cdot\Vert_h\leq\Vert\cdot\Vert_{h1}$. By Equation (\ref{errorEquation2}), the coercivity of $a_{h1}(\cdot,\cdot)$, the boundedness of $a_{h}(\cdot,\cdot)$, and the consistency error (\ref{consiErr}), we obtain
     \begin{align*}
        \Vert\xi_u\Vert_{h1}^2 \leq 2\mu\Vert\eta_u\Vert_{h}\Vert\xi_u\Vert_{h} + (2\mu/\sqrt{\alpha}) h\vert\vecb{u}\vert_2\Vert\xi_u\Vert_{h}
        \leq\Vert\eta_u\Vert_{h1}\Vert\xi_u\Vert_{h1} + (2\mu/\alpha)^{\frac{1}{2}}h\vert\vecb{u}\vert_2\Vert\xi_u\Vert_{h1},
     \end{align*}
     which implies
     \begin{align*}
        {\Vert\xi_u\Vert_{h1}\leq \Vert\eta_u\Vert_{h1} + (2\mu/\alpha)^{\frac{1}{2}}h\vert\vecb{u}\vert_2.}
     \end{align*}
     {The above estimate} gives, together with the approximation properties (\ref{propePi3}) and the triangle inequality,
     \begin{equation} \label{errorEstimateH1}
       \begin{aligned}
         \Vert\vecb{u}-\vecb{u}_h\Vert_{h1} & {\leq\Vert\vecb{u}-\Pi_h\vecb{u}\Vert_{h1}+\Vert\Pi_h\vecb{u}-\vecb{u}_h\Vert_{h1}} \\
         & \leq 2\Vert\vecb{u}-\Pi_h\vecb{u}\Vert_{h1} + {(2\mu/\alpha)^{\frac{1}{2}}}h\vert\vecb{u}\vert_2 \\
         & \leq Ch({(2\mu)^{\frac{1}{2}}}\vert\vecb{u}\vert_2+\lambda^{\frac{1}{2}}\vert\nabla\cdot\vecb{u}\vert_{1}).
         \end{aligned}
      \end{equation}
      Then the last inequality in Theorem~\ref{ThHomoErrorOrder} follows immediately from the $H^2$-regularity estimate \cite{brenner1992linear},
     \begin{equation} \label{M1regular}
     	\Vert\vecb{u}\Vert_2 + \lambda\Vert\nabla\cdot\vecb{u}\Vert_1\leq C\Vert\vecb{f}\Vert,
     \end{equation}
     which holds true in case {$\Omega$ is a convex polygon in two dimensions}. Thus we complete the proof.
   \end{proof}

   Using duality argument it is not hard to {obtain the error estimate for $\vecb{L}^2$-norm} (cf. \cite{brenner1992linear,li2022low})
   \begin{align*}
      \Vert\vecb{u}-\vecb{u}_h\Vert \leq Ch^2\Vert\vecb{f}\Vert,
   \end{align*}
   when \eqref{M1regular} holds true. 
   Here we propose a strategy to get a sharper $\vecb{L}^2$ estimate for our method like \cref{ieq:estimate1},
   which is based on a specifically designed projection
   $\Pi_h^\beta: \vecb{V}\rightarrow \vecb{V}_h$, defined by
   \begin{align*}
      \Pi_h^\beta\vecb{v}:=\Pi_h^e \vecb{v}+\Pi_h^R(\vecb{v}-\Pi_h^e\vecb{v}),
   \end{align*}
   where $\Pi_h^e: \vecb{V}\rightarrow \vecb{V}_h^1$ is an elliptic projection defined by
   \begin{align*}
      a(\Pi_h^e\vecb{v},\vecb{w}_h):=a(\vecb{v},\vecb{w}_h) \quad \forall \vecb{w}_h\in \vecb{V}_h^1.
   \end{align*}
   From known theory of elliptic type projection one has
   \begin{align*}
      \|\vecb{v}-\Pi_h^e\vecb{v}\|+h \|\nabla(\vecb{v}-\Pi_h^e\vecb{v})\|
      \leq C\inf_{\vecb{w}_h\in  \vecb{V}_h^1} \|\nabla(\vecb{v}-\vecb{w}_h)\|.
   \end{align*}
   Analogously, following the same strategy one can prove that
   \begin{equation} \label{propePie1}
      \nabla \cdot \Pi_h^\beta \boldsymbol{v}=P_h \nabla \cdot \boldsymbol{v},
   \end{equation}
   \begin{equation} \label{propePie2}
      \|\boldsymbol{v}-\Pi_h^\beta \boldsymbol{v}\| + h\Vert\boldsymbol{v}-\Pi_h^\beta \boldsymbol{v}\Vert_{h}\leq Ch^2| \boldsymbol{v}|_{2},
   \end{equation}
   which is similar to \cref{propePi1} and \cref{propePi2}, but an elementwise estimate for $\Pi_h^\beta$ is not available.

   \begin{theorem} \label{ThHomoL2Order}
      Let $\vecb{u}$ be the solution of variational formulation (\ref{M1weak}) and $\vecb{u}_h$ be the solution of (\ref{mfM1}).
      {Under the assumption that \eqref{M1regular} holds}, one has
      \begin{equation}\label{ieq:estimate3}
         \Vert\vecb{u}-\vecb{u}_h\Vert \leq C{(1+2\mu)}h^2\vert\vecb{u}\vert_2,
      \end{equation}
      where $C$ is a positive constant independent of $\lambda$, {$\mu$} and $h$.
   \end{theorem}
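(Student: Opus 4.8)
The plan is to use a standard duality (Aubin--Nitsche) argument, but tailored so that the consistency error from the $\vecb{RT}_0$ part does not spoil the optimal $h^2$ rate. Let $\vecb{e}:=\vecb{u}-\vecb{u}_h$ and let $\vecb{z}\in\vecb{V}$ solve the dual problem $2\mu a(\vecb{z},\vecb{v})+\lambda(\nabla\cdot\vecb{z},\nabla\cdot\vecb{v})=(\vecb{e},\vecb{v})$ for all $\vecb{v}\in\vecb{V}$; by \eqref{M1regular} applied to the dual problem one has $\Vert\vecb{z}\Vert_2+\lambda\Vert\nabla\cdot\vecb{z}\Vert_1\leq C\Vert\vecb{e}\Vert$. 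Then $\Vert\vecb{e}\Vert^2=(\vecb{e},\vecb{e})=2\mu a(\vecb{z},\vecb{e})+\lambda(\nabla\cdot\vecb{z},\nabla\cdot\vecb{e})$. The idea is to rewrite this as $a_{h1}(\vecb{e},\vecb{z})$ plus the consistency defect coming from testing the exact equation with $\vecb{z}$ (which is $\vecb{H}^1$-conforming, so no defect there) and then insert $\Pi_h^\beta\vecb{z}$.

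The key steps, in order. First I would write, using that $\vecb{z}\in\vecb{V}$ has no $\vecb{RT}_0$ component, $2\mu a(\vecb{z},\vecb{e})+\lambda(\nabla\cdot\vecb{z},\nabla\cdot\vecb{e}) = a_{h1}(\vecb{e},\vecb{z})$, where on the right $a_{h1}$ is evaluated on $\vecb{V}(h)$ with $\vecb{z}$ in the first-kind slot. Next, split $\vecb{z}=\Pi_h^\beta\vecb{z}+(\vecb{z}-\Pi_h^\beta\vecb{z})$ and use the error equation \eqref{errorEquation}, which gives $a_{h1}(\vecb{e},\Pi_h^\beta\vecb{z})=-\delta_{h1}(\vecb{u},\Pi_h^\beta\vecb{z})=-(-2\mu\nabla\cdot\vecb{\epsilon}(\vecb{u}),(\Pi_h^\beta\vecb{z})^R)$; but by construction $(\Pi_h^\beta\vecb{z})^R=\Pi_h^R(\vecb{z}-\Pi_h^e\vecb{z})$, which is $\mathcal{O}(h)$ in $\vecb{L}^2$ since $\vecb{z}-\Pi_h^e\vecb{z}=\mathcal{O}(h)$ in $\vecb{L}^2$ — so this term is bounded by $C\,2\mu\,h\vert\vecb{u}\vert_2\cdot h^{-1}\Vert\vecb{z}-\Pi_h^e\vecb{z}\Vert\le C(2\mu)h^2\vert\vecb{u}\vert_2\Vert\vecb{e}\Vert$ after using the $\vecb{L}^2$-bound on $\vecb{z}-\Pi_h^e\vecb{z}$ and the regularity of $\vecb{z}$. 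For the remaining piece $a_{h1}(\vecb{e},\vecb{z}-\Pi_h^\beta\vecb{z})$, I would use Galerkin orthogonality in the first slot as well: since $\nabla\cdot(\vecb{z}-\Pi_h^\beta\vecb{z})=\nabla\cdot\vecb{z}-P_h\nabla\cdot\vecb{z}$ is orthogonal to $W_h\supset\nabla\cdot\vecb{e}_h$-type terms, the $\lambda$-term collapses or is controlled; the $2\mu$-term becomes $2\mu a_h(\vecb{e},\vecb{z}-\Pi_h^\beta\vecb{z})\le 2\mu\Vert\vecb{e}\Vert_h\Vert\vecb{z}-\Pi_h^\beta\vecb{z}\Vert_h\le C(2\mu)\Vert\vecb{e}\Vert_h\,h\vert\vecb{z}\vert_2\le C(2\mu)h\vert\vecb{u}\vert_2\cdot h^{-? }$ — here I must use \eqref{ieq:estimate1}, $\Vert\vecb{e}\Vert_h\le Ch\vert\vecb{u}\vert_2$, together with $\Vert\vecb{z}-\Pi_h^\beta\vecb{z}\Vert_h\le Ch\vert\vecb{z}\vert_2\le Ch\Vert\vecb{e}\Vert$, to get $C(2\mu)h^2\vert\vecb{u}\vert_2\Vert\vecb{e}\Vert$.

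Collecting the two contributions yields $\Vert\vecb{e}\Vert^2\le C(1+2\mu)h^2\vert\vecb{u}\vert_2\Vert\vecb{e}\Vert$, whence \eqref{ieq:estimate3}. The main obstacle I anticipate is the consistency-error term $\delta_{h1}(\vecb{u},\Pi_h^\beta\vecb{z})$: a naive bound using $\Vert(\Pi_h^\beta\vecb{z})^R\Vert\le Ch\vert\vecb{z}\vert_1$ only gives $h^2$ if one is careful that the $h^{-1}$ from the $\nabla\cdot\vecb{\epsilon}(\vecb{u})$ estimate is exactly absorbed — and this is precisely why the paper introduces $\Pi_h^\beta$ (with the elliptic projection $\Pi_h^e$ in place of the nodal interpolant): one needs $(\Pi_h^\beta\vecb{z})^R=\Pi_h^R(\vecb{z}-\Pi_h^e\vecb{z})$ to be one order better in $\vecb{L}^2$ than a generic $\vecb{RT}_0$ function, i.e. $\mathcal{O}(h)$ rather than $\mathcal{O}(1)$, uniformly in $\lambda$. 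One must also double-check that all orthogonality arguments in the $\lambda$-weighted terms hold with $\Pi_h^\beta$ via \eqref{propePie1}, and that no hidden $\lambda$-dependence creeps in through the dual regularity (it does not, because the $\lambda\Vert\nabla\cdot\vecb{z}\Vert_1$ contributions are multiplied by $\nabla\cdot(\vecb{z}-\Pi_h^\beta\vecb{z})$-type quantities that vanish against $W_h$). The constant ends up as $C(1+2\mu)$, independent of $\lambda$ and $h$, as claimed.
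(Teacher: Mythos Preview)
There is a gap in the step $\|\vecb{e}\|^2=2\mu a(\vecb{z},\vecb{e})+\lambda(\nabla\cdot\vecb{z},\nabla\cdot\vecb{e})$. The dual variational identity holds only for test functions in $\vecb{V}$, but $\vecb{e}=\vecb{u}-\vecb{u}_h\in\vecb{V}(h)\setminus\vecb{V}$ because $\vecb{u}_h$ carries an $\vecb{RT}_0$ component. Extending the dual equation to $\vecb{V}(h)$ by integrating the strong form by parts (exactly as done for the primal equation before \eqref{consiErr}) gives $(\vecb{e},\vecb{v})=a_{h1}(\vecb{v},\vecb{z})+\delta_{h1}(\vecb{z},\vecb{v})$ with $\delta_{h1}(\vecb{z},\vecb{v})=(-2\mu\nabla\cdot\vecb{\epsilon}(\vecb{z}),\vecb{v}^R)$; your parenthetical ``$\vecb{z}$ is $\vecb{H}^1$-conforming, so no defect there'' confuses which slot is nonconforming. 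The omitted term $\delta_{h1}(\vecb{z},\vecb{e})$ is not zero, but it is easily absorbed: $|\delta_{h1}(\vecb{z},\vecb{e})|\le C(2\mu)h|\vecb{z}|_2\|\vecb{e}\|_R\le C(2\mu)h|\vecb{z}|_2\|\vecb{e}\|_h\le C(2\mu)h^2|\vecb{u}|_2\|\vecb{e}\|$ by \eqref{consiErr}, \eqref{ieq:relaRandh}, \eqref{ieq:estimate1} and the dual regularity. A second point: the $\lambda$-term in $a_{h1}(\vecb{e},\vecb{z}-\Pi_h^\beta\vecb{z})$ does \emph{not} collapse, since $\nabla\cdot\vecb{e}\notin W_h$; what survives after removing $\nabla\cdot\vecb{u}_h\in W_h$ is $\lambda(\nabla\cdot\vecb{u}-P_h\nabla\cdot\vecb{u},\nabla\cdot\vecb{z}-P_h\nabla\cdot\vecb{z})$, which must be bounded via $\lambda|\nabla\cdot\vecb{z}|_1\le C\|\vecb{e}\|$ from the dual regularity to get $Ch^2|\vecb{u}|_2\|\vecb{e}\|$.

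With these fixes your argument works, but it follows a different route from the paper's. You apply $\Pi_h^\beta$ to the \emph{dual} solution $\vecb{z}$ and exploit the $\vecb{L}^2$ approximation of the elliptic projection to make $\|(\Pi_h^\beta\vecb{z})^R\|=\|\Pi_h^R(\vecb{z}-\Pi_h^e\vecb{z})\|=\mathcal{O}(h^2)$. The paper instead takes the dual right-hand side to be $\Pi_h^\beta\vecb{u}-\vecb{u}_h\in\vecb{V}_h$ (so the test function in the dual identity is discrete and the term $\delta_{h1}(\vecb{\phi},\cdot)$ appears explicitly from the outset) and applies $\Pi_h^\beta$ to the \emph{primal} solution $\vecb{u}$; there the elliptic projection is used not for its $\vecb{L}^2$ rate but for its Galerkin orthogonality $a(\Pi_h^e\vecb{u},\vecb{w}_h)=a(\vecb{u},\vecb{w}_h)$ for $\vecb{w}_h\in\vecb{V}_h^1$, which converts $a_{h1}(\Pi_h^\beta\vecb{u}-\vecb{u}_h,\Pi_h\vecb{\phi})$ into $-\delta_{h1}(\vecb{u},\Pi_h\vecb{\phi})$ plus a small $a^R$-remainder. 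Both uses of $\Pi_h^\beta$ are legitimate but conceptually distinct; your version is arguably more direct once the omitted dual consistency term is restored.
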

   \begin{proof}
      First we introduce the following duality problem:
      $$
         \begin{aligned}
            -\nabla\cdot(2\mu \vecb{\epsilon}(\vecb{\phi})+\lambda(\nabla\cdot\vecb{\phi})\mathbf{I}) & =\Pi_h^\beta\vecb{u}-\vecb{u}_h &  & \text { in } \Omega,         \\
            \vecb{\phi}                                                                               & =\vecb{0}                       &  & \text { on } \partial\Omega.
         \end{aligned}
      $$
      Since $\Pi_h^\beta\vecb{u}-\vecb{u}_h\in [L^2(\Omega)]^d$, {from \eqref{M1regular}} we have the following regularity
      \begin{equation} \label{dualRegul}
         \Vert\vecb{\phi}\Vert_2 + \lambda\Vert\nabla\cdot\vecb{\phi}\Vert_1\leq C\Vert\Pi_h^\beta\vecb{u}-\vecb{u}_h\Vert.
      \end{equation}
      Multiplying $\vecb{v}\in\vecb{V}(h)$, and integrating by parts, we can get
      \begin{align*}
         a_{h1}(\vecb{v},\vecb{\phi})=(\Pi_h^\beta\vecb{u}-\vecb{u}_h,\vecb{v})-\delta_{h1}(\vecb{\phi},\vecb{v}) \quad\forall\vecb{v}\in\vecb{V}(h).
      \end{align*}
      {Taking $\vecb{v}=\Pi_h^\beta\vecb{u}-\vecb{u}_h$ gives}
      \begin{equation}\label{eq:erroreqforL2}
         \begin{aligned}
            \Vert\Pi_h^\beta\vecb{u}-\vecb{u}_h\Vert^2 & =a_{h1}(\Pi_h^\beta\vecb{u}-\vecb{u}_h,\vecb{\phi})+\delta_{h1}(\vecb{\phi},\Pi_h^\beta\vecb{u}-\vecb{u}_h)                                                                          \\
                                                       & =a_{h1}(\Pi_h^\beta\vecb{u}-\vecb{u}_h,\vecb{\phi}-\Pi_h\vecb{\phi})+a_{h1}(\Pi_h^\beta\vecb{u}-\vecb{u}_h,\Pi_h\vecb{\phi})+\delta_{h1}(\vecb{\phi},\Pi_h^\beta\vecb{u}-\vecb{u}_h) \\
         \end{aligned}
      \end{equation}
      
      Similarly, note that ${\nabla\cdot (\Pi_h^\beta\vecb{u}-\vecb{u}_h)\in W_h}$, from \cref{propePi1} one has
      \begin{align*}
         \vert a_{h1}(\Pi_h^\beta\vecb{u}-\vecb{u}_h,\vecb{\phi}-\Pi_h\vecb{\phi})\vert
          & =
         \vert {2\mu}a_{h}(\Pi_h^\beta\vecb{u}-\vecb{u}_h,\vecb{\phi}-\Pi_h\vecb{\phi}) \vert              \\
          & \leq {2\mu}\Vert\Pi_h^\beta\vecb{u}-\vecb{u}_h\Vert_h \Vert\vecb{\phi}-\Pi_h\vecb{\phi}\Vert_h
         \leq Ch{(2\mu)}\Vert\Pi_h^\beta\vecb{u}-\vecb{u}_h\Vert_h \vert\vecb{\phi}\vert_2.
      \end{align*}
      On the other hand, a combination of the definition of $a_{h1}$, $\Pi_h^e$ and $\delta_{h1}$,
      together with \cref{propePie1} and the fact $\nabla\cdot\Pi_h\vecb{\phi}\in W_h$,
      implies
      {\small
      \begin{align*}
         a_{h1}(\Pi_h^\beta\vecb{u}-\vecb{u}_h,\Pi_h\vecb{\phi})
          & =2\mu\left(a(\Pi_h^e\vecb{u}-\vecb{u}_h^1,(\Pi_h\vecb{\phi})^1)
         +a^R((\Pi_h^\beta\vecb{u})^R-\vecb{u}_h^R,(\Pi_h\vecb{\phi})^R)\right) \\
         &+\lambda(\nabla\cdot(\Pi_h^\beta\vecb{u}-\vecb{u}_h),
         \nabla\cdot\Pi_h\vecb{\phi})                                                     \\
          & =2\mu\left(a(\vecb{u}-\vecb{u}_h^1,(\Pi_h\vecb{\phi})^1)
         +a^R((\Pi_h^\beta\vecb{u})^R-\vecb{u}_h^R,(\Pi_h\vecb{\phi})^R)\right)   \\
         &+\lambda(\nabla\cdot(\vecb{u}-\vecb{u}_h),
         \nabla\cdot\Pi_h\vecb{\phi})                                                     \\
          & =a_{h1}(\vecb{u}-\vecb{u}_h,\Pi_h\vecb{\phi})+
         2\mu a^R\left((\Pi_h^\beta\vecb{u})^R,(\Pi_h\vecb{\phi})^R\right) \\
          & =-\delta_{h1}(\vecb{u},\Pi_h\vecb{\phi})+
         2\mu a^R\left((\Pi_h^\beta\vecb{u})^R,(\Pi_h\vecb{\phi})^R\right).
      \end{align*}
      }

      From the definition of $\Vert\cdot\Vert_R$ and $\Vert\cdot\Vert_{h}$ one has
      \begin{align}\label{ieq:relaRandh}
         \|\vecb{v}\|_R\leq \|\vecb{v}-\vecb{w}\|_h \quad\forall \vecb{w}\in \vecb{V}.
      \end{align}
      From \cref{consiErr} and \cref{ieq:relaRandh}, we have
      \begin{align*}
         \vert \delta_{h1}(\vecb{u},\Pi_h\vecb{\phi})\vert
          & \leq Ch{(2\mu)}\vert\vecb{u}\vert_2\|\Pi_h\vecb{\phi}\|_R \\
         & \leq Ch{(2\mu)}\vert\vecb{u}\vert_2\|\vecb{\phi}-\Pi_h\vecb{\phi}\|_h\leq Ch^2{(2\mu)}\vert\vecb{u}\vert_2\vert\vecb{\phi}\vert_2, \\
         \vert a^R\left((\Pi_h^\beta\vecb{u})^R,(\Pi_h\vecb{\phi})^R\right)\vert
          & \leq \|\Pi_h^\beta\vecb{u}\|_R\|\Pi_h\vecb{\phi}\|_R\leq C \|\vecb{u}-\Pi_h^\beta\vecb{u}\|_h\|\vecb{\phi}-\Pi_h\vecb{\phi}\|_h \\
         & \leq C h\|\vecb{u}-\Pi_h^\beta\vecb{u}\|_h\vert\vecb{\phi}\vert_2,                     \\
         \vert{\delta_{h1}}(\vecb{\phi},\Pi_h^\beta\vecb{u}-\vecb{u}_h)\vert
          & \leq C h{(2\mu)}\vert\vecb{\phi}\vert_2\|\Pi_h^\beta\vecb{u}-\vecb{u}_h\|_R \\
         & \leq C h{(2\mu)}(\|\vecb{u}-\Pi_h^\beta\vecb{u}\|_h+\|\vecb{u}-\vecb{u}_h\|_h)\vert\vecb{\phi}\vert_2.
      \end{align*}
      Substituting above estimates into \cref{eq:erroreqforL2} gives
      \begin{align*}
         \Vert\Pi_h^\beta\vecb{u}-\vecb{u}_h\Vert^2
         \leq C(h\Vert\vecb{u}-\vecb{u}_h\Vert_{h}+h\Vert\vecb{u}-\Pi_h^\beta\vecb{u}\Vert_{h} + h^2\vert\vecb{u}\vert_2){(2\mu)}\vert\vecb{\phi}\vert_2.
      \end{align*}
      By the regularity assumption (\ref{dualRegul}), the error estimates \cref{ieq:estimate1} and \cref{propePie2}, and the triangle inequality we get
      \begin{align*}
         \Vert\vecb{u}-\vecb{u}_h\Vert \leq C(1+2\mu)h^2\vert\vecb{u}\vert_2.
      \end{align*}
      Thus we complete the proof.
   \end{proof}

   \section{Mixed boundary conditions} \label{Section5}
   We define the space compatible with mixed boundary conditions (\ref{M2}).
   In case no ambiguity occurs, we use the same notations in Section \ref{Section4}. We define
   \begin{align}
   	\nonumber\vecb{H}_{\Gamma_D}(\mathrm{div};\Omega) &:=\left\{\vecb{v}\in \vecb{H}(\mathrm{div};\Omega):\vecb{v}\cdot\vecb{n}|_{\Gamma_D}=0 \right\},\\
   	\nonumber\vecb{V}_h^R &:=\left\{\vecb{v}_h\in \vecb{H}_{\Gamma_D}(\mathrm{div};\Omega):\vecb{v}_h|_{T}\in [P_0(T)]^d\oplus\vecb{x}P_0(T) \,\forall T\in\mathcal{T}_h\right\},\\
   	\nonumber\vecb{V}_h^1 &:=\left\{\vecb{v}_h\in\vecb{V}_{\Gamma_D}:\vecb{v}_h|_{T}\in[P_1(T)]^d\, \forall T\in\mathcal{T}_h\right\},\\
   	\nonumber\vecb{V}_h &:=\vecb{V}_h^1\oplus\vecb{V}_h^R, \vecb{V}(h):=\vecb{V}_{\Gamma_D}\oplus\vecb{V}_h^R.
   \end{align}
   \begin{remark}\label{rem:principle}
      The scheme for mixed boundary conditions should be designed carefully
      for $\vecb{P}_1\oplus \vecb{RT}_0$ element to obtain an optimally convergent consistency error.
      To be more precise, denote by $a_h^m(\cdot,\cdot)$ and $F(\cdot)$ two generic forms representing the
      left-hand side and right-hand side of a discretization, respectively.
      Let $\vecb{u}$ be the true solution related to \cref{M2}. The principle to designing $F$ and $a_h^m$ is that
      we hope they satisfy
      \begin{align}\label{eq:principle}
         F(\vecb{v})-a_h^m(\vecb{u},\vecb{v})=(-2\mu\nabla\cdot\vecb{\epsilon}(\vecb{u}),\vecb{v}^R)\text{ for all }\vecb{v}\in \vecb{V}(h),
      \end{align}
      like $\delta_{h1}$ in Section~\ref{Section3}. In this way the consistency error is still
      optimally convergent. A trivial extension from the pure Dirichlet problem to the mixed boundary problem might read
      \begin{align}\label{sch:badscheme}
         2\mu a_h(\vecb{u}_h,\vecb{v}_h)  + \lambda(\nabla\cdot\vecb{u}_h,\nabla\cdot\vecb{v}_h)
         =(\vecb{f},\vecb{v}_h) + \int_{\Gamma_N}\vecb{g}\cdot\vecb{v}_hds.
      \end{align}
      However, one can check that this does not satisfy the principle: some additional consistency error
      arises from the Neumann boundary part because the discretization related to $\vecb{RT}_0$ part in
      $a_h$ is not obtained from integration by parts. The numerical experiments later also demonstrate
      that the above scheme is not optimal. To overcome this issue, we should modify the discretization
      {in the case of mixed boundary conditions}. Several schemes which satisfy the principle {are} listed below.
   \end{remark}
   \noindent\textbf{$\vecb{P}_1\oplus\vecb{RT}_0$ scheme 2.}
   The nonsymmetric finite element scheme to deal with mixed boundary conditions (\ref{M2}) {reads}
   \begin{equation} \label{mfM2}\tag{S2}
      \begin{aligned}
         a_{NS}(\vecb{u}_h,\vecb{v}_h) &:=2\mu a_h(\vecb{u}_h,\vecb{v}_h) + \lambda(\nabla\cdot\vecb{u}_h,\nabla\cdot\vecb{v}_h)+\int_{\Gamma_N}2\mu\vecb{\epsilon}(\vecb{u}_h^1)\vecb{n}\cdot\vecb{v}^R_hds \\
         &-\int_{\Gamma_N}2\mu\vecb{\epsilon}(\vecb{v}_h^1)\vecb{n}\cdot\vecb{u}^R_hds=(\vecb{f},\vecb{v}_h) + \int_{\Gamma_N}\vecb{g}\cdot\vecb{v}_hds.
      \end{aligned}
   \end{equation}
   \noindent\textbf{$\vecb{P}_1\oplus\vecb{RT}_0$ scheme 3.}
   The symmetric finite element scheme to deal with mixed boundary conditions (\ref{M2}) {reads}
   \begin{equation} \label{mfM2_2}\tag{S3}
      \begin{aligned}
         a_{S}(\vecb{u}_h,\vecb{v}_h) &:=2\mu a_h(\vecb{u}_h,\vecb{v}_h) + \lambda(\nabla\cdot\vecb{u}_h,\nabla\cdot\vecb{v}_h)+\int_{\Gamma_N}2\mu\vecb{\epsilon}(\vecb{u}_h^1)\vecb{n}\cdot\vecb{v}^R_hds \\
         &+\int_{\Gamma_N}2\mu\vecb{\epsilon}(\vecb{v}_h^1)\vecb{n}\cdot\vecb{u}^R_hds =(\vecb{f},\vecb{v}_h) + \int_{\Gamma_N}\vecb{g}\cdot\vecb{v}_hds.
      \end{aligned}
   \end{equation}
   \noindent\textbf{$\vecb{P}_1\oplus\vecb{RT}_0$ scheme 4.}
   A modified version of \eqref{mfM2} or \eqref{mfM2_2} {reads}
   \begin{equation} \label{mfM2_3}\tag{S4}
      \begin{aligned}
        &a_{M}(\vecb{u}_h,\vecb{v}_h):=2\mu a_h(\vecb{u}_h,\vecb{v}_h) + \lambda(\nabla\cdot\vecb{u}_h,\nabla\cdot\vecb{v}_h)+\int_{\Gamma_N}(2\mu\vecb{\epsilon}(\vecb{u}_h^1)\vecb{n}\cdot\vecb{n})(\vecb{v}^R_h\cdot\vecb{n})ds \\
        &\pm\int_{\Gamma_N}(2\mu\vecb{\epsilon}(\vecb{v}_h^1)\vecb{n}\cdot\vecb{n})(\vecb{u}^R_h\cdot\vecb{n})ds=(\vecb{f},\vecb{v}_h) + \int_{\Gamma_N}\left[\vecb{g}\cdot\vecb{v}_h^1+(\vecb{g}\cdot\vecb{n})(\vecb{v}_h^R\cdot\vecb{n})\right]ds.
      \end{aligned}
   \end{equation}
   The main feature of \eqref{mfM2_3} is that it only involves the normal component of $\vecb{v}_h^R$ on the stress boundary,
   which matches the degrees of freedom of $\vecb{RT}_0$ well and, hence, makes the scheme easier to implement.
   It can be verified that all the three schemes satisfy the designing principle \cref{eq:principle} in Remark \ref{rem:principle}
   by integration by parts. For example, the consistency error of \eqref{mfM2} is
	\begin{equation}\label{eq:conserrNS}
	\begin{aligned}
	&\delta_{NS}(\vecb{u},\vecb{v}):=(\vecb{f},\vecb{v}) + \int_{\Gamma_N}\vecb{g}\cdot\vecb{v}ds-a_{NS}(\vecb{u}, \vecb{v}) \\
	& =\left(-\nabla\cdot(2\mu\vecb{\epsilon}(\vecb{u})+\lambda\nabla\cdot\vecb{u}\textbf{I}),\vecb{v}\right)
	+\int_{\Gamma_N}
	\left[2\mu\vecb{\epsilon}(\vecb{u})\vecb{n}\cdot\vecb{v}+\lambda\nabla\cdot\vecb{u}(\vecb{v}\cdot\vecb{n})\right]ds - a_{NS}(\vecb{u}, \vecb{v})\\
	& =2\mu a(\vecb{u},\vecb{v}^1)+\lambda(\nabla\cdot\vecb{u},\nabla\cdot\vecb{v})+
	\left(-\nabla\cdot(2\mu\vecb{\epsilon}(\vecb{u})),\vecb{v}^R\right)
	+\int_{\Gamma_N}
	2\mu\vecb{\epsilon}(\vecb{u})\vecb{n}\cdot\vecb{v}^Rds - a_{NS}(\vecb{u}, \vecb{v}) \\
	& =\left(-\nabla\cdot(2\mu\vecb{\epsilon}(\vecb{u})),\vecb{v}^R\right).
	\end{aligned}
	\end{equation}
   \begin{remark}
      Compared to \cref{sch:badscheme},
   the third term in the left-hand side of {schemes} \eqref{mfM2}--\eqref{mfM2_3} is introduced to satisfy the
   designing principle, while the fourth term is a consistent term to guarantee that a scheme is
   symmetric or nonsymmetric but stable as long as $\alpha_T, T\in \mathcal{T}_h$, are positive.
   This strategy is very similar to the discontinuous Galerkin (DG) methods for the elliptic problem \cite{arnold2002unified}.
   {However, there is a fundamental difference between our method and DG methods. 
   In contrast to the DG methods, the proposed schemes here do not involve any interior jump stabilization or
   face integral over interior faces, which are simpler to implement and do not
   change the sparsity pattern of the coefficient matrix.}
   \end{remark}

   The analysis of these schemes is indeed very similar to the pure Dirichlet boundary case. For brevity,
   we only analyze (\ref{mfM2}) and (\ref{mfM2_2}) below. We redefine norm $\|\cdot\|_h$ {on $\vecb{V}(h)$:}
	\begin{equation} \label{eqNewh}
		\|\vecb{v}\|_h^2:=a(\vecb{v}^1,\vecb{v}^1) + a^R(\vecb{v}^R,\vecb{v}^R)+ \sum_{e\in\Gamma_N}h_e\Vert\vecb{\epsilon}(\vecb{v}^1)\Vert^2_e{,}
	\end{equation}
   where {$\vecb{v}^1\in\vecb{V}_{\Gamma_D}$, $\vecb{v}^R\in\vecb{V}_h^R$ and}  $\Vert\vecb{\epsilon}(\vecb{v}^1)\Vert_e^2:=\int_e\vecb{\epsilon}(\vecb{v}^1):\vecb{\epsilon}(\vecb{v}^1) ds$. Then we define a norm
   $$
      \Vert\vecb{v}\Vert^2_{h2}:=2\mu\Vert\vecb{v}\Vert^2_h + \lambda(\nabla\cdot\vecb{v},\nabla\cdot\vecb{v}).
   $$
   {Assume that {$T\in\mathcal{T}_h$} is an element with $e$ as one edge}. By the trace inequality and the inverse inequality we can get
   $$
      {h_e\Vert\vecb{\epsilon}(\vecb{v}_h^1)\Vert^2_e \leq C\Vert\nabla\vecb{v}_h^1\Vert^2_T\quad\forall \vecb{v}_h\in\vecb{V}_h.}
   $$
   As a result, the two norms $\Vert\cdot\Vert_{h1}$ and $\Vert\cdot\Vert_{h2}$ are equivalent in $\vecb{V}_h$ space, i.e., $\Vert\vecb{v}_h\Vert_{h2}\leq C\Vert\vecb{v}_h\Vert_{h1}$.
   For any $\vecb{v}\in [H^2(T)]^d$ and $e\subset \partial T$, we have
   \begin{align*}
      h_e^{\frac{1}{2}}\Vert\vecb{\epsilon}(\vecb{v}-\Pi_h^1\vecb{v})\Vert_e
      \leq C(\vert\vecb{v}-\Pi_h^1\vecb{v}\vert_{1,T}
      +h\vert\vecb{v}-\Pi_h^1\vecb{v}\vert_{2,T})
      = C(\vert\vecb{v}-\Pi_h^1\vecb{v}\vert_{1,T}
      +h\vert\vecb{v}\vert_{2,T}),
   \end{align*}
   which means the interpolation estimate \cref{propePi2} still holds for new $\|\cdot\|_h$ norm {(\ref{eqNewh})},
   and together with (\ref{propePi3}), implies
   \begin{equation} \label{propePi4}
      \Vert\vecb{v}-\Pi_h\vecb{v}\Vert_{h2}\leq Ch({(2\mu)^{\frac{1}{2}}}\vert\vecb{v}\vert_2+\lambda^{\frac{1}{2}}\vert\nabla\cdot\vecb{v}\vert_1) \quad\forall \vecb{v}\in [H^2(\Omega)]^d.
   \end{equation}

   Define $\mathcal{T}_h(\Gamma_N):=\{T\in \mathcal{T}_h:\vert\partial T\cap \Gamma_N\vert\neq 0\}$. The following lemma is concerned with the unique solvability of (\ref{mfM2}) and (\ref{mfM2_2}).
   \begin{lemma}\label{lem:coerformixed}
      The numerical scheme (\ref{mfM2}) has a unique solution $\vecb{u}_h\in\vecb{V}_h$,
      and the numerical scheme (\ref{mfM2_2}) has a unique solution $\vecb{u}_h\in\vecb{V}_h$ if all $\alpha_T, T\in\mathcal{T}_h(\Gamma_N),$ are large enough.
   \end{lemma}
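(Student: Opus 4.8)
The plan is to establish unique solvability of the linear finite-dimensional systems \eqref{mfM2} and \eqref{mfM2_2} by showing that the corresponding homogeneous problem has only the trivial solution; equivalently, I will verify a coercivity (or, for the nonsymmetric case, an inf-sup/definiteness) property on $\vecb{V}_h$ with respect to the norm $\Vert\cdot\Vert_{h2}$. For \eqref{mfM2}, set $\vecb{v}_h=\vecb{u}_h$ in $a_{NS}$: the two boundary terms $\int_{\Gamma_N}2\mu\vecb{\epsilon}(\vecb{u}_h^1)\vecb{n}\cdot\vecb{u}_h^R\,ds$ cancel because they appear with opposite signs, so $a_{NS}(\vecb{u}_h,\vecb{u}_h)=2\mu\, a_h(\vecb{u}_h,\vecb{u}_h)+\lambda(\nabla\cdot\vecb{u}_h,\nabla\cdot\vecb{u}_h)$. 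Since $\vert\Gamma_D\vert>0$, the discrete Korn inequality applies to $\vecb{v}_h^1\in\vecb{V}_h^1\subset\vecb{V}_{\Gamma_D}$, and $a^R$ is positive definite on $\vecb{V}_h^R$ because every $\alpha_T>0$; together with $\vecb{V}_h^1\cap\vecb{V}_h^R=\{\vecb{0}\}$ this gives $a_{NS}(\vecb{u}_h,\vecb{u}_h)\geq c\Vert\vecb{u}_h\Vert_{h1}^2\gtrsim\Vert\vecb{u}_h\Vert_{h2}^2$. Coercivity on a finite-dimensional space yields existence and uniqueness immediately.

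For the symmetric scheme \eqref{mfM2_2}, the same substitution $\vecb{v}_h=\vecb{u}_h$ now produces $a_S(\vecb{u}_h,\vecb{u}_h)=2\mu\, a_h(\vecb{u}_h,\vecb{u}_h)+\lambda(\nabla\cdot\vecb{u}_h,\nabla\cdot\vecb{u}_h)+2\int_{\Gamma_N}2\mu\vecb{\epsilon}(\vecb{u}_h^1)\vecb{n}\cdot\vecb{u}_h^R\,ds$, and the boundary cross term is now an indefinite perturbation that must be absorbed. I would estimate it via Cauchy--Schwarz on each Neumann edge, the trace inequality, and then a scaled Young inequality:
\begin{align*}
\Bigl\vert 2\int_{\Gamma_N}2\mu\vecb{\epsilon}(\vecb{u}_h^1)\vecb{n}\cdot\vecb{u}_h^R\,ds\Bigr\vert
&\leq 4\mu\sum_{e\in\Gamma_N}\Vert\vecb{\epsilon}(\vecb{u}_h^1)\Vert_e\,\Vert\vecb{u}_h^R\Vert_e \\
&\leq C\mu\sum_{T\in\mathcal{T}_h(\Gamma_N)}h_T^{-1}\Vert\vecb{u}_h^R\Vert_T\,h_T^{1/2}\Vert\vecb{\epsilon}(\vecb{u}_h^1)\Vert_e \\
&\leq \delta\,2\mu\,a_h(\vecb{u}_h,\vecb{u}_h)+\frac{C\mu}{\delta}\sum_{T\in\mathcal{T}_h(\Gamma_N)}h_T^{-2}\Vert\vecb{u}_h^R\Vert_T^2,
\end{align*}
where the trace and inverse inequalities collected earlier (the bound $h_e\Vert\vecb{\epsilon}(\vecb{v}_h^1)\Vert_e^2\leq C\Vert\nabla\vecb{v}_h^1\Vert_T^2$) were used for the edge terms of $\vecb{u}_h^1$, and the Raviart--Thomas inverse estimate $\Vert\vecb{u}_h^R\Vert_e\leq C h_T^{-1/2}\Vert\vecb{u}_h^R\Vert_T$ for $\vecb{u}_h^R$. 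The last sum is bounded by $\bigl(\min_{T\in\mathcal{T}_h(\Gamma_N)}\alpha_T\bigr)^{-1}a^R(\vecb{u}_h^R,\vecb{u}_h^R)$ (recall $a^R=a^0$ in the analysis). Hence, choosing $\delta<1$ and then requiring every $\alpha_T$ with $T\in\mathcal{T}_h(\Gamma_N)$ larger than a fixed threshold proportional to $C/\delta$ makes the perturbation strictly subordinate, so $a_S(\vecb{u}_h,\vecb{u}_h)\geq c\Vert\vecb{u}_h\Vert_{h2}^2$, and unique solvability follows as before.

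The main obstacle is the symmetric case: one has to make precise that the indefinite boundary cross term can be controlled \emph{purely} by the $\vecb{RT}_0$ stabilization $a^R$ on the layer $\mathcal{T}_h(\Gamma_N)$ and by a small fraction of $a_h$, which is exactly what dictates the ``$\alpha_T$ large enough'' hypothesis. Care is needed to keep all constants independent of $h$ (so that the shape-regularity constant $\gamma$ enters the trace/inverse estimates but $h$ does not) and independent of $\lambda$ and $\mu$ — the latter holds because the cross term and the term it is absorbed into both carry the factor $2\mu$, which cancels out of the smallness condition. For the nonsymmetric scheme no condition on $\alpha_T$ is needed, matching the statement of the lemma.
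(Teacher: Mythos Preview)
Your proposal is correct and follows essentially the same route as the paper: for \eqref{mfM2} you exploit the cancellation of the two boundary terms to reduce $a_{NS}(\vecb{u}_h,\vecb{u}_h)$ to $\Vert\vecb{u}_h\Vert_{h1}^2\gtrsim\Vert\vecb{u}_h\Vert_{h2}^2$, and for \eqref{mfM2_2} you bound the indefinite cross term via the trace/inverse inequalities and a Young-type splitting, absorbing one piece into $a_h$ and the other into the $\vecb{RT}_0$ stabilization on $\mathcal{T}_h(\Gamma_N)$, exactly as the paper does. The only cosmetic difference is that the paper also records boundedness of $a_{NS}$ and invokes Lax--Milgram, whereas you appeal directly to coercivity on a finite-dimensional space; both are equivalent here.
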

   \begin{proof}
      For any $\vecb{v}_h\in\vecb{V}_h$, we have
      $$
     \begin{aligned}
        a_{NS}(\vecb{v}_h,\vecb{v}_h) & =2\mu a_h(\vecb{v}_h,\vecb{v}_h) + \lambda(\nabla\cdot\vecb{v}_h,\nabla\cdot\vecb{v}_h)\\
        &\geq{\Vert\vecb{v}_h\Vert_{h1}^2}\geq C\Vert\vecb{v}_h\Vert_{h2}^2.
     \end{aligned}
      $$
      Thus the coercivity of $a_{NS}(\cdot,\cdot)$ holds. It follows from Cauchy-Schwarz inequality and trace inequality that
      \begin{equation} \label{boundEdgeInte}
         \begin{aligned}
            \left|\sum_{e\in\Gamma_N}\int_e2\mu\vecb{\epsilon}(\vecb{u^1})\vecb{n}\cdot \vecb{v}^R ds\right| & \leq 2\mu\left(\sum_{e\in\Gamma_N}h_e\Vert\vecb{\epsilon}(\vecb{u}^1)\Vert_e^2\right)^{\frac{1}{2}}\left(\sum_{e\in\Gamma_N}h_e^{-1}\Vert\vecb{v}^R\Vert_e^2\right)^{\frac{1}{2}} \\
            & \leq{2\mu}C\left(\sum_{e\in\Gamma_N}h_e\Vert\vecb{\epsilon}(\vecb{u}^1)\Vert_e^2\right)^{\frac{1}{2}} \left(\sum_{T\in\mathcal{T}_h(\Gamma_N)}h_T^{-2}\Vert\vecb{v}^R\Vert_T^2\right)^{\frac{1}{2}} \\
            & \leq{2\mu}C\Vert\vecb{u}\Vert_{h}\Vert\vecb{v}\Vert_{R} \quad\forall \vecb{u},\vecb{v}\in\vecb{V}(h).
         \end{aligned}
      \end{equation}
      The term $\int_{\Gamma_N}2\mu\vecb{\epsilon}(\vecb{v}_h^1)\vecb{n}\cdot\vecb{u}^R_hds$ can be bounded in the same way.
      Using the above inequality, we can get the boundedness of $a_{NS}(\cdot,\cdot)$. By Lax--Milgram Theorem, we get the unique solvability of (\ref{mfM2}).

      Next we analyze the unique solvability of (\ref{mfM2_2}). From the bound of {the} edge {integrals} (\ref{boundEdgeInte}) and Holder's inequality, we can get
      $$
         \begin{aligned}
            a_{S}(\vecb{v}_h,\vecb{v}_h) & \geq 2\mu(\vecb{\epsilon}(\vecb{v}_h^1),\vecb{\epsilon}(\vecb{v}_h^1)) + 2\mu\sum_{T\in\mathcal{T}_h}\alpha_{T}{h_{T}^{-2}}(\vecb{v}_h^{R}, \vecb{v}_h^{R})_T+\lambda(\nabla\cdot\vecb{v}_h,\nabla\cdot\vecb{v}_h) \\
            &-\left| 2\int_{\Gamma_N}2\mu\vecb{\epsilon}(\vecb{v}_h^1)\vecb{n}\cdot\vecb{v}^R_hds\right| \\
            &\geq 2\mu(\vecb{\epsilon}(\vecb{v}_h^1),\vecb{\epsilon}(\vecb{v}_h^1)) + 2\mu\sum_{T\in\mathcal{T}_h}\alpha_{T}{h_{T}^{-2}}(\vecb{v}_h^{R}, \vecb{v}_h^{R})_T + \lambda(\nabla\cdot\vecb{v}_h,\nabla\cdot\vecb{v}_h) \\
            &-\left(2\mu\varepsilon\sum_{T\in\mathcal{T}_h(\Gamma_N)}\Vert\nabla\vecb{v}_h^1\Vert_T^2 +2\mu C\sum_{T\in\mathcal{T}_h(\Gamma_N)}h_T^{-2}\Vert\vecb{v}_h^R\Vert_T^2\right) \\
            &= 2\mu\left((\vecb{\epsilon}(\vecb{v}_h^1),\vecb{\epsilon}(\vecb{v}_h^1))-\varepsilon\sum_{T\in\mathcal{T}_h(\Gamma_N)}\Vert\nabla\vecb{v}_h^1\Vert_T^2\right) \\
            &+2\mu\left(\sum_{T\in\mathcal{T}_h}\alpha_{T}{h_{T}^{-2}}(\vecb{v}_h^{R}, \vecb{v}_h^{R})_T-C\sum_{T\in\mathcal{T}_h(\Gamma_N)}h_T^{-2}\Vert\vecb{v}_h^R\Vert_T^2\right) +\lambda(\nabla\cdot\vecb{v}_h,\nabla\cdot\vecb{v}_h).
         \end{aligned}
      $$
      The constant $C$ depends on {the constants of} the trace inequality and Holder's inequality. {The parameters} $\alpha_T$ over $\mathcal{T}_h(\Gamma_N)$ should be chosen greater than $C$.
      The boundedness of $a_{S}(\cdot,\cdot)$ is very similar to $a_{NS}(\cdot,\cdot)$. Thus we complete the proof.
   \end{proof}
   
   Based on the coercivity and boundedness analyzed in Lemma~\ref{lem:coerformixed}, the consistency error such as
   \cref{eq:conserrNS}, the estimates of interpolation \cref{propePi4} and $H^2$-regularity in two dimensions, one can similarly obtain the following estimates.
   \begin{theorem} \label{ThMixErrorOrder}
      Let $\vecb{u}$ be the solution of (\ref{M2weak}) and $\vecb{u}_h$ be the solution of (\ref{mfM2}) or (\ref{mfM2_2}). Then assuming $\vecb{u}\in [H^2(\Omega)]^d$, we have the following error estimates:
      \begin{align*}
         \Vert\vecb{u}-\vecb{u}_h\Vert_{h} &\leq Ch\vert\vecb{u}\vert_2, \\
         {\Vert\vecb{u}-\vecb{u}_h\Vert_{h2}} &\leq {Ch((2\mu)^{\frac{1}{2}}\vert\vecb{u}\vert_2+\lambda^{\frac{1}{2}}\vert\nabla\cdot\vecb{u}\vert_1)},
      \end{align*}
      where $C$ is a positive constant independent of $\lambda$, {$\mu$} and $h$.
   \end{theorem}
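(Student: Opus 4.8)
The plan is to adapt the proof of Theorem~\ref{ThHomoErrorOrder} almost verbatim; the one genuine novelty is the bookkeeping of the two face integrals over $\Gamma_N$ appearing in $a_{NS}$ and $a_S$. I would treat \eqref{mfM2} in detail and indicate at the end that \eqref{mfM2_2} is handled identically. Split the error as $\vecb{u}-\vecb{u}_h=\eta_u-\xi_u$ with $\eta_u:=\vecb{u}-\Pi_h\vecb{u}$ and $\xi_u:=\vecb{u}_h-\Pi_h\vecb{u}\in\vecb{V}_h$. Subtracting the scheme from \eqref{M2weak} and invoking the consistency identity \eqref{eq:conserrNS}, which holds precisely because the scheme was built to satisfy the designing principle \eqref{eq:principle}, produces the error equation $a_{NS}(\vecb{u}-\vecb{u}_h,\vecb{v}_h)=-\delta_{NS}(\vecb{u},\vecb{v}_h)=-(-2\mu\nabla\cdot\vecb{\epsilon}(\vecb{u}),\vecb{v}_h^R)$ for all $\vecb{v}_h\in\vecb{V}_h$. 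Since the right-hand side has exactly the shape of $\delta_{h1}$ in \eqref{consiErr}, the same argument gives $|\delta_{NS}(\vecb{u},\vecb{v}_h)|\le(2\mu/\sqrt{\alpha})\,h\,|\vecb{u}|_2\,\|\vecb{v}_h\|_R\le(2\mu/\sqrt{\alpha})\,h\,|\vecb{u}|_2\,\|\vecb{v}_h\|_h$, where from now on $\|\cdot\|_h$ denotes the norm \eqref{eqNewh} (and $\|\cdot\|_R\le\|\cdot\|_h$ is immediate from that definition).

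Now test with $\vecb{v}_h=\xi_u$. Since $\nabla\cdot\Pi_h\vecb{u}=P_h\nabla\cdot\vecb{u}$ by \eqref{propePi1} and $\nabla\cdot\xi_u\in W_h$, one has $(\nabla\cdot\eta_u,\nabla\cdot\xi_u)=0$, so $a_{NS}(\eta_u,\xi_u)$ reduces to $2\mu a_h(\eta_u,\xi_u)$ plus the two $\Gamma_N$ face terms; the first is bounded by $2\mu\|\eta_u\|_h\|\xi_u\|_h$ (Cauchy--Schwarz, the norm \eqref{eqNewh} dominating the old one), while each face term is bounded via \eqref{boundEdgeInte} by $2\mu C\|\eta_u\|_h\|\xi_u\|_R$ or by $2\mu C\|\xi_u\|_h\|\eta_u\|_R$, hence by $2\mu C\|\eta_u\|_h\|\xi_u\|_h$. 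Thus $a_{NS}(\xi_u,\xi_u)=a_{NS}(\eta_u,\xi_u)+\delta_{NS}(\vecb{u},\xi_u)\le C\,2\mu\,\|\eta_u\|_h\|\xi_u\|_h+(2\mu/\sqrt{\alpha})\,h\,|\vecb{u}|_2\|\xi_u\|_h$. Lemma~\ref{lem:coerformixed} supplies the coercivity $a_{NS}(\xi_u,\xi_u)\ge c\,\|\xi_u\|_{h2}^2\ge 2\mu c\,\|\xi_u\|_h^2$ with $c$ independent of $h$, $\mu$ and $\lambda$. Dividing the resulting chain by $2\mu\|\xi_u\|_h$ makes the prefactor $2\mu$ cancel and yields $\|\xi_u\|_h\le C\|\eta_u\|_h+C\,h\,|\vecb{u}|_2$; the first estimate then follows from the triangle inequality and the interpolation bound $\|\eta_u\|_h\le C\,h\,|\vecb{u}|_2$, which holds for the norm \eqref{eqNewh} by the edge estimate displayed just before \eqref{propePi4}.

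For the weighted estimate I would return to $a_{NS}(\xi_u,\xi_u)\le C\,2\mu\,\|\eta_u\|_h\|\xi_u\|_h+(2\mu/\sqrt{\alpha})\,h\,|\vecb{u}|_2\|\xi_u\|_h$ and rewrite the two right-hand terms using $(2\mu)^{1/2}\|\cdot\|_h\le\|\cdot\|_{h2}$ as $\|\eta_u\|_{h2}\|\xi_u\|_{h2}$ and $(2\mu/\alpha)^{1/2}h|\vecb{u}|_2\|\xi_u\|_{h2}$; combined with $a_{NS}(\xi_u,\xi_u)\ge c\|\xi_u\|_{h2}^2$ this gives $\|\xi_u\|_{h2}\le C\|\eta_u\|_{h2}+C(2\mu)^{1/2}h|\vecb{u}|_2$, and the triangle inequality together with the interpolation estimate \eqref{propePi4} delivers $\|\vecb{u}-\vecb{u}_h\|_{h2}\le C\,h\,((2\mu)^{1/2}|\vecb{u}|_2+\lambda^{1/2}|\nabla\cdot\vecb{u}|_1)$. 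For the symmetric scheme \eqref{mfM2_2} the same computation goes through with one caveat: the coercivity of $a_S$ — hence every constant above — needs $\alpha_T$ large enough for $T\in\mathcal{T}_h(\Gamma_N)$, which is the hypothesis of Lemma~\ref{lem:coerformixed}; the consistency identity is unaffected since the true solution $\vecb{u}$ has vanishing $\vecb{RT}_0$-component, so the extra symmetrizing face term drops out. The only real obstacle is organizational: one must keep the prefactor $2\mu$ attached to every face-integral contribution so that these cross-terms can be absorbed into the $\|\cdot\|_{h2}$-norm and all constants stay independent of $\mu$, $\lambda$ and $h$; with that bookkeeping in place the argument is a line-for-line transcription of the Dirichlet case.
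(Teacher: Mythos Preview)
Your proposal is correct and is precisely the argument the paper has in mind: the paper does not spell out a proof of Theorem~\ref{ThMixErrorOrder} but merely states that it follows from the coercivity and boundedness in Lemma~\ref{lem:coerformixed}, the consistency identity \eqref{eq:conserrNS}, and the interpolation estimate \eqref{propePi4}, exactly the ingredients you assemble. Your bookkeeping of the $\Gamma_N$ face integrals via \eqref{boundEdgeInte} and the cancellation of the $2\mu$ prefactor are the right mechanisms, and your remark that the symmetrizing face term in \eqref{mfM2_2} vanishes on the exact solution (since $\vecb{u}^R=0$) is the correct reason the consistency error is unchanged.
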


   \section{Numerical {experiments}} \label{Section6}
   In this section, we divide into two subsections to verify the theoretical results in Theorem \ref{ThHomoGradRob}, Theorem \ref{ThHomoErrorOrder}, Theorem \ref{ThHomoL2Order} and
   Theorem \ref{ThMixErrorOrder}.
   The numerical experiments in the first subsection follows the examples in Section \ref{Section2}.
   The second subsection is the Cook's Membrane problem, which is used to show the robustness of our novel scheme for nearly incompressible elasticity.
   We set $\alpha_T=\alpha=1$ for all $T\in\mathcal{T}_h$.
   We choose $a^R(\cdot,\cdot)=a^{\mathrm{div}}(\cdot,\cdot)$, because it is related to a diagonal block, 
   and all the terms of the matrix can be calculated using the barycentric quadrature rule.

   \subsection{Parameter-robustness test}
   The examples to show the locking-free and gradient-robust properties are the same as {in} Section \ref{Section2}.
   We always use the primal formulation (\ref{M1}) to take numerical experiments.
   When we show the accuracy of our novel schemes, the boundary conditions are divided into two cases.
   One is homogeneous Dirichlet boundary condition $\vecb{u}=\vecb{0} \text{ on } \partial\Omega$, and the other is mixed boundary conditions (\ref{M2})
   $$
      \vecb{u}=\vecb{0} \,\text{ on }\Gamma_D,\quad\vecb{\sigma}\vecb{n}=\vecb{g} \,\text{ on }\Gamma_N.
   $$
   The Neumann boundary $\Gamma_N$ is posed on the right boundary ($x=1$) of the domain, while on the other three sides the Dirichlet condition is used.
   \begin{figure}[htbp]
   	\centering
   	\begin{subfigure}{.35\textwidth}
   		\centering
   		\includegraphics[width=\textwidth]{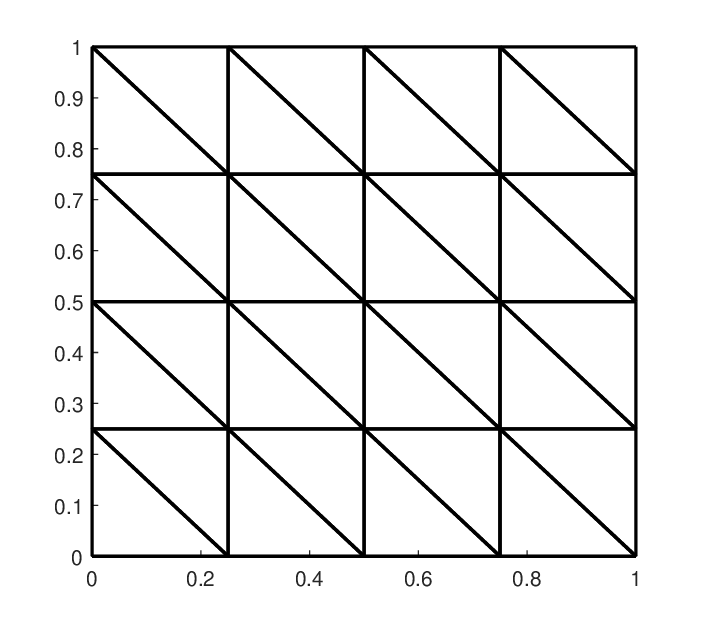}
   		\caption*{G1}
   	\end{subfigure}
   	\begin{subfigure}{.35\textwidth}
   		\centering
   		\includegraphics[width=\textwidth]{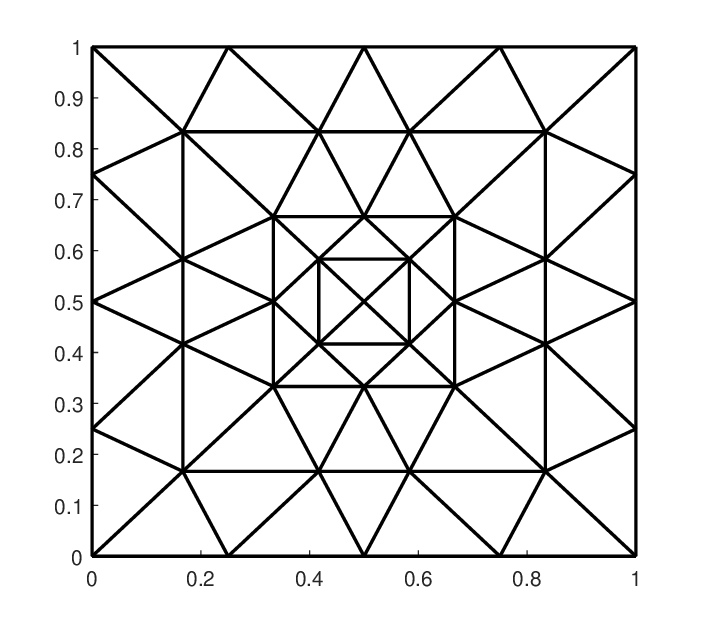}
   		\caption*{G2}
   	\end{subfigure}
   	\caption{Structured grid (G1) and unstructured grid (G2).} \label{meshGrid}
   \end{figure}

   \begin{table}[htbp]
      \begin{center}
        \begin{minipage}{350pt}
         \caption{Numerical results of Example \ref{example1}, using {scheme} (\ref{mfM1}) with homogeneous Dirichlet boundary condition when $\lambda=1$.}\label{table1}
         \setlength{\tabcolsep}{1mm}\begin{tabular}{@{}ccccc|ccccc@{}}
            \hline
            ndof(G1) & $\|u-u_h\|$ & Rate & $\|\nabla_h(u-u_h)\|$ & Rate & ndof(G2) & $\|u-u_h\|$ & Rate & $\|\nabla_h(u-u_h)\|$ & Rate \\ \hline
            370      & 1.43E-1     & --   & 2.65                  & --   & 186      & 1.08E-1     & --   & 2.98                  & --   \\
            1378     & 4.16E-2     & 1.78 & 1.32                  & 1.00 & 690      & 3.22E-2     & 1.75 & 1.54                  & 0.95 \\
            5314     & 1.08E-2     & 1.93 & 6.58E-1               & 1.00 & 2658     & 8.39E-3     & 1.94 & 7.75E-1               & 0.99 \\
            20866    & 2.75E-3     & 1.98 & 3.28E-1               & 1.00 & 10434    & 2.11E-3     & 1.98 & 3.87E-1               & 0.99 \\
            82690    & 6.90E-4     & 1.99 & 1.64E-1               & 1.00 & 41346    & 5.30E-4     & 1.99 & 1.94E-1               & 0.99 \\
            \hline
         \end{tabular}
        \end{minipage}
    \end{center}
   \end{table}

   \begin{table}[htbp]
      \begin{center}
        \begin{minipage}{350pt}
         \caption{Numerical results of Example \ref{example1}, using {scheme} (\ref{mfM1}) with homogeneous Dirichlet boundary condition when $\lambda=10^6$.}\label{table2}
         \setlength{\tabcolsep}{1mm}\begin{tabular}{@{}ccccc|ccccc@{}}\hline
             ndof(G1) & $\|u-u_h\|$ & Rate & $\|\nabla_h(u-u_h)\|$ & Rate & ndof(G2) & $\|u-u_h\|$ & Rate & $\|\nabla_h(u-u_h)\|$ & Rate \\ \hline
             370      & 1.42E-1     &      & 2.63                  &      & 186      & 1.15E-1     &      & 2.95                  &      \\
             1378     & 4.16E-2     & 1.77 & 1.31                  & 1.01 & 690      & 3.46E-2     & 1.74 & 1.51                  & 0.96 \\
             5314     & 1.09E-2     & 1.93 & 6.44E-1               & 1.01 & 2658     & 8.96E-3     & 1.95 & 7.61E-1               & 0.96 \\
             20866    & 2.76E-3     & 1.98 & 3.21E-1               & 1.00 & 10434    & 2.24E-3     & 1.99 & 3.80E-1               & 1.00 \\
             82690    & 6.92E-4     & 1.99 & 1.60E-1               & 1.00 & 41346    & 5.59E-4     & 2.00 & 1.90E-1               & 1.00 \\
            \hline
         \end{tabular}
        \end{minipage}
    \end{center}
   \end{table}

   Note that the discrete $H^1$-seminorm is bounded by the energy norm, we measure
   the error in the discrete $H^1$-seminorm $\|\nabla_h(u-u_h)\|$ to validate theoretical analysis.
   The term `ndof' denotes the number of degrees of freedom, and it is approximately equal to twice the number of vertices
   plus the number of edges in the triangular partitions $\mathcal{T}_h$.
   We focus on the errors and convergence rates on structured mesh grid and unstructured mesh grid (see Figure \ref{meshGrid}).
   The {scheme} (\ref{mfM1}) is used to handle the Dirichlet boundary condition, while the {schemes} (\ref{mfM2}), (\ref{mfM2_2})
   and (\ref{mfM2_3}) are used to handle the mixed boundary conditions.
   From Table \ref{table1}-Table \ref{table2}, we find that {scheme} (\ref{mfM1}) for homogeneous displacement boundary condition has optimal convergence rates,
   and {scheme} (\ref{mfM1}) is parameter-robust about $\lambda$.
   Table \ref{table3} shows the numerical results of Equation (\ref{sch:badscheme}), where the convergence rate of $\vecb{L}^2$-norm decreases.
   So the modification of the left-hand side is necessary.

   \begin{table}[htbp]
    \begin{center}
       \begin{minipage}{350pt}
       \caption{Numerical results of Example \ref{example1}, using the scheme (\ref{sch:badscheme}) with mixed boundary condition when $\lambda=1$.}\label{table3}
       \setlength{\tabcolsep}{1mm}\begin{tabular}{@{}ccccc|ccccc@{}}\hline
            ndof(G1) & $\|u-u_h\|$ & Rate & $\|\nabla_h(u-u_h)\|$ & Rate & ndof(G2) & $\|u-u_h\|$ & Rate & $\|\nabla_h(u-u_h)\|$ & Rate \\ \hline
            370      & 1.57E-1     &      & 2.68                  &      & 186      & 1.05E-1     &      & 2.98                  &      \\
            1378     & 4.83E-2     & 1.70 & 1.34                  & 1.00 & 690      & 3.26E-2     & 1.68 & 1.54                  & 0.94 \\
            5314     & 1.37E-2     & 1.81 & 6.70E-1               & 1.00 & 2658     & 1.02E-2     & 1.67 & 7.78E-1               & 0.99 \\
            20866    & 4.06E-3     & 1.76 & 3.39E-1               & 0.98 & 10434    & 3.44E-3     & 1.57 & 3.90E-1               & 0.99 \\
            82690    & 1.32E-3     & 1.62 & 1.74E-1               & 0.98 & 41346    & 1.22E-3     & 1.48 & 1.96E-1               & 0.98 \\
          \hline
       \end{tabular}
    \end{minipage}
    \end{center}
   \end{table}

   From Table \ref{table4}-Table \ref{table7}, all the schemes for mixed boundary conditions have the optimal convergence rates. Especially when $\lambda=10^6$,
   all the schemes are stable and locking-free for nearly incompressible situations.
   Note that the difference between the {schemes} (\ref{mfM2}), (\ref{mfM2_2}) and (\ref{mfM2_3}) does not affect the uniform convergence of $\lambda$.
   For (\ref{mfM2_2}) and (\ref{mfM2_3}), we only take numerical experiments when $\lambda=10^6$.
   The errors of $\vecb{L}^2$-norm and $\vecb{H}^1$-seminorm vary little when $\lambda$ takes different values.

   \begin{table}[htbp]
       \caption{Numerical results of Example \ref{example1}, using the nonsymmetric scheme (\ref{mfM2}) with mixed boundary condition when $\lambda=1$.}\label{table4}
       \setlength{\tabcolsep}{1mm}\begin{tabular}{@{}ccccc|ccccc@{}}\hline
            ndof(G1) & $\|u-u_h\|$ & Rate & $\|\nabla_h(u-u_h)\|$ & Rate & ndof(G2) & $\|u-u_h\|$ & Rate & $\|\nabla_h(u-u_h)\|$ & Rate \\ \hline
            370      & 1.53E-1     &      & 2.66                  &      & 186      & 1.11E-1     &      & 2.99                  &      \\
            1378     & 4.53E-2     & 1.75 & 1.32                  & 1.00 & 690      & 3.23E-2     & 1.79 & 1.54                  & 0.95 \\
            5314     & 1.19E-2     & 1.92 & 6.59E-1               & 1.01 & 2658     & 8.37E-3     & 1.94 & 7.75E-1               & 0.99 \\
            20866    & 3.03E-3     & 1.97 & 3.28E-1               & 1.00 & 10434    & 2.11E-3     & 1.98 & 3.88E-1               & 0.99 \\
            82690    & 7.62E-4     & 1.99 & 1.64E-1               & 1.00 & 41346    & 5.28E-4     & 1.99 & 1.94E-1               & 1.00 \\
          \hline
       \end{tabular}
  \end{table}

  \begin{table}[htbp]
       \caption{Numerical results of Example \ref{example1}, using the nonsymmetric scheme (\ref{mfM2}) with mixed boundary condition when $\lambda=10^6$.}\label{table5}
       \setlength{\tabcolsep}{1mm}\begin{tabular}{@{}ccccc|ccccc@{}}\hline
            ndof(G1) & $\|u-u_h\|$ & Rate & $\|\nabla_h(u-u_h)\|$ & Rate & ndof(G2) & $\|u-u_h\|$ & Rate & $\|\nabla_h(u-u_h)\|$ & Rate \\ \hline
            370      & 1.49E-1     &      & 2.64                  &      & 186      & 1.21E-1     &      & 2.96                  &      \\
            1378     & 4.47E-2     & 1.73 & 1.30                  & 1.02 & 690      & 3.48E-2     & 1.80 & 1.52                  & 0.96 \\
            5314     & 1.18E-2     & 1.91 & 6.45E-1               & 1.01 & 2658     & 9.03E-3     & 1.94 & 7.61E-1               & 0.99 \\
            20866    & 3.01E-3     & 1.97 & 3.21E-1               & 1.00 & 10434    & 2.28E-3     & 1.98 & 3.80E-1               & 1.00 \\
            82690    & 7.52E-4     & 2.00 & 1.60E-1               & 1.00 & 41346    & 5.71E-4     & 1.99 & 1.90E-1               & 1.00 \\
          \hline
       \end{tabular}
 \end{table}

 \begin{table}[htbp]
       \caption{Numerical results of Example \ref{example1}, using the symmetric scheme (\ref{mfM2_2}) with mixed boundary condition when $\lambda=10^6$.}\label{table6}
       \setlength{\tabcolsep}{1mm}\begin{tabular}{@{}ccccc|ccccc@{}}\hline
            ndof(G1) & $\|u-u_h\|$ & Rate & $\|\nabla_h(u-u_h)\|$ & Rate & ndof(G2) & $\|u-u_h\|$ & Rate & $\|\nabla_h(u-u_h)\|$ & Rate \\ \hline
            370      & 1.48E-1     &      & 2.65                  &      & 186      & 1.20E-1     &      & 2.97                  &      \\
            1378     & 4.40E-2     & 1.75 & 1.30                  & 1.01 & 690      & 3.38E-2     & 1.83 & 1.52                  & 0.96 \\
            5314     & 1.16E-2     & 1.91 & 6.46E-1               & 1.01 & 2658     & 8.78E-3     & 1.94 & 7.63E-1               & 1.00 \\
            20866    & 2.96E-3     & 1.97 & 3.21E-1               & 1.00 & 10434    & 2.21E-3     & 1.98 & 3.81E-1               & 1.00 \\
            82690    & 7.45E-4     & 1.99 & 1.60E-1               & 1.00 & 41346    & 5.55E-4     & 1.99 & 1.90E-1               & 1.00 \\
          \hline
       \end{tabular}
 \end{table}

 \begin{table}[htbp]
    \begin{center}
        \begin{minipage}{350pt}
       \caption{Numerical results of Example \ref{example1}, using the nonsymmetric form of {scheme} (\ref{mfM2_3}), with mixed boundary condition when $\lambda=10^6$.}\label{table7}
       \setlength{\tabcolsep}{1mm}\begin{tabular}{@{}ccccc|ccccc@{}}\hline
            ndof(G1) & $\|u-u_h\|$ & Rate & $\|\nabla_h(u-u_h)\|$ & Rate & ndof(G2) & $\|u-u_h\|$ & Rate & $\|\nabla_h(u-u_h)\|$ & Rate \\ \hline
            370      & 1.50E-1     &      & 2.64                  &      & 186      & 1.14E-1     &      & 2.96                  &      \\
            1378     & 4.51E-2     & 1.74 & 1.30                  & 1.01 & 690      & 3.41E-2     & 1.74 & 1.52                  & 0.96 \\
            5314     & 1.19E-2     & 1.91 & 6.45E-1               & 1.01 & 2658     & 8.89E-3     & 1.94 & 7.61E-1               & 0.99 \\
            20866    & 3.03E-3     & 1.97 & 3.21E-1               & 1.00 & 10434    & 2.23E-3     & 1.99 & 3.80E-1               & 1.00 \\
            82690    & 7.62E-4     & 1.99 & 1.60E-1               & 1.00 & 41346    & 5.57E-4     & 2.00 & 1.90E-1               & 1.00 \\
          \hline
       \end{tabular}
    \end{minipage}
    \end{center}
 \end{table}

 Table \ref{table8} and Table \ref{table9} are used to show the gradient-robustness of the {scheme} (\ref{mfM1}) with
 homogeneous displacement boundary condition. From Theorem \ref{ThHomoGradRob},
 we have the bound $$\Vert\vecb{u}_h\Vert_h\leq\frac{c}{\lambda+\mu}\Vert\phi\Vert$$
 for the gradient-robust discretization.
 As a comparison, for non-gradient-robust methods we have the following bound from \cite{basava2022pressure}
 $$
    \Vert\vecb{u}_h\Vert_{1,h} \leq\frac{c}{\mu}\left(\frac{1}{\lambda}+1\right)\Vert\phi\Vert.
 $$
 By analyzing Table \ref{table8} horizontally, we can find that $\Vert\nabla_h\vecb{u}_h\Vert$ is independent of the discretizations.
 And the vertical direction of the table indicates that $\Vert\nabla_h\vecb{u}_h\Vert=\mathcal{O}(\lambda^{-1})$.
 For $\lambda=10^4$ and $\mu\in (0,1]$, $\frac{1}{\lambda+\mu}\approx c$(constant).
 Table \ref{table9} shows that for different scaled $\mu$, the {quantity} $\Vert\nabla_h\vecb{u}_h\Vert$ {only} varies {very} little, which verifies Theorem \ref{ThHomoGradRob}.

 \begin{table}[htbp]
    \begin{center}
        \begin{minipage}{500pt}
       \caption{Norm $\Vert\nabla_h\vecb{u}_h\Vert$ of Example \ref{example2} with $\mu=1$, different $\lambda$ and different discretizations on structured mesh grid (G1).}\label{table8}
       \begin{tabular}{@{}cccccc@{}}\hline
            $\Vert\nabla_h\vecb{u}_h\Vert$ & ndof=370 & ndof=1378 & ndof=5314 & ndof=20866 & ndof=82690 \\ \hline
            $\lambda=1$                    & 1.089E-1 & 1.124E-1  & 1.136E-1  & 1.139E-1   & 1.140E-1   \\
            $\lambda=10$                   & 3.389E-2 & 3.631E-2  & 3.721E-2  & 3.750E-2   & 3.759E-2   \\
            $\lambda=10^2$                 & 4.443E-3 & 4.839E-3  & 4.994E-3  & 5.046E-3   & 5.063E-3   \\
            $\lambda=10^4$                 & 4.608E-5 & 5.032E-5  & 5.198E-5  & 5.256E-5   & 5.274E-5   \\
            $\lambda=10^6$                 & 4.609E-7 & 5.034E-7  & 5.200E-7  & 5.258E-7   & 5.276E-7   \\
          \hline
       \end{tabular}
    \end{minipage}
    \end{center}
 \end{table}

 \begin{table}[htbp]
    \begin{center}
        \begin{minipage}{500pt}
       \caption{Norm $\Vert\nabla_h\vecb{u}_h\Vert$ of Example \ref{example2} with $\lambda=10^4$, different $\mu$ and different discretizations on structured mesh grid (G1).}\label{table9}
       \begin{tabular}{@{}cccccc@{}}\hline
            $\Vert\nabla_h\vecb{u}_h\Vert$ & ndof=370  & ndof=1378 & ndof=5314 & ndof=20866 & ndof=82690 \\ \hline
            $\mu=10^{-6}$                  & 4.6099E-5 & 5.0344E-5 & 5.2009E-5 & 5.2585E-5  & 5.2768E-5  \\
            $\mu=10^{-4}$                  & 4.6099E-5 & 5.0344E-5 & 5.2009E-5 & 5.2585E-5  & 5.2768E-5  \\
            $\mu=10^{-2}$                  & 4.6099E-5 & 5.0344E-5 & 5.2009E-5 & 5.2584E-5  & 5.2768E-5  \\
            $\mu=10^{-1}$                  & 4.6097E-5 & 5.0342E-5 & 5.2007E-5 & 5.2582E-5  & 5.2766E-5  \\
            $\mu=1$                        & 4.6082E-5 & 5.0324E-5 & 5.1987E-5 & 5.2563E-5  & 5.2746E-5  \\
          \hline
       \end{tabular}
    \end{minipage}
    \end{center}
 \end{table}

 \subsection{Cook's Membrane Problem}
 This is a popular benchmark problem \cite{cook1974improved} for linear elasticity. As shown in Figure \ref{figCooksMembrance},
 the domain $\Omega$ is a convex region formed by connecting four vertices (0,0), (48,44), (48,60) and (0,44).
 The displacement boundary condition $\vecb{u}=\vecb{0}$ is imposed on the left side of the domain. A uniform vertical traction is imposed on the right side,
 that is to say, the boundary condition on the right side is $\vecb{g}=(0,\frac{1}{16})^{\top}$. The rest of the boundary has no traction force.
 The body force $\vecb{f}=\vecb{0}$, the elasticity modulus $E=1$, and the Lam\'{e} constants are given by
 $$
    \lambda=\frac{E\nu}{(1+\nu)(1-2\nu)},\quad \mu=\frac{E}{2(1+\nu)}.
 $$
 As $\nu\rightarrow\frac{1}{2}$ and $\lambda\rightarrow\infty$, the material becomes nearly incompressible. We choose the Possion's ratio as $0.33$ and $0.4999$,
 while $\nu=0.33$ denotes copper and $\nu=0.4999$ denotes rubber. There is no analytical solution to this problem.
 We solving this problem using both the classical lagrangian element $\vecb{P}_1$ and the $\vecb{P}_1\oplus \vecb{RT}_0$ element on unstructured triangulation mesh.
 Figure \ref{figCooksP1} and Figure \ref{figCooksP1PlusRT0} show the numerical dilation $\nabla\cdot\vecb{u}_h$
 using the $\vecb{P}_1$ and $\vecb{P}_1\oplus \vecb{RT}_0$ element, respectively.
 When $\nu=0.33$, both numerical methods have good approximation results.
 The area's top-left corner is squeezed and the dilation $\nabla\cdot\vecb{u}_h$ is negative.
 The bottom of the area is stretched and $\nabla\cdot\vecb{u}_h$ is positive.
 When $\nu=0.4999$, the material is nearly incompressible. The classical Galerkin method exhibits locking phenomenon, the dilation oscillation occurs.
 The $\vecb{P}_1\oplus\vecb{RT}_0$ scheme (\ref{mfM2}) yields a good numerical dilation approximation.
 Due to the nearly incompressible feature of the material, the dilation is numerically much smaller than that of the compressible material.

 \begin{figure}
    \centering
    \includegraphics[scale=0.6]{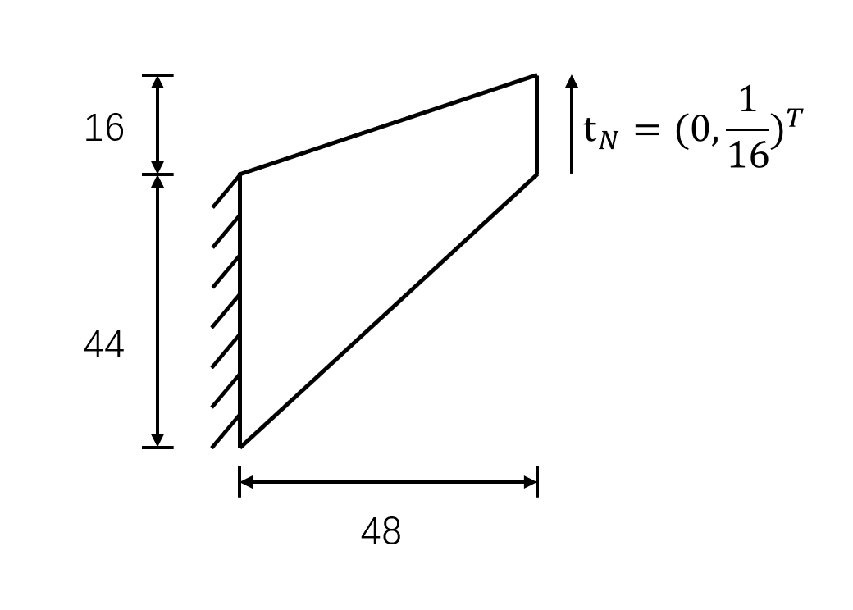}
    \caption{Cook's membrance problem.}
    \label{figCooksMembrance}
 \end{figure}

 \begin{figure}
    \centering
    \includegraphics[scale=0.25]{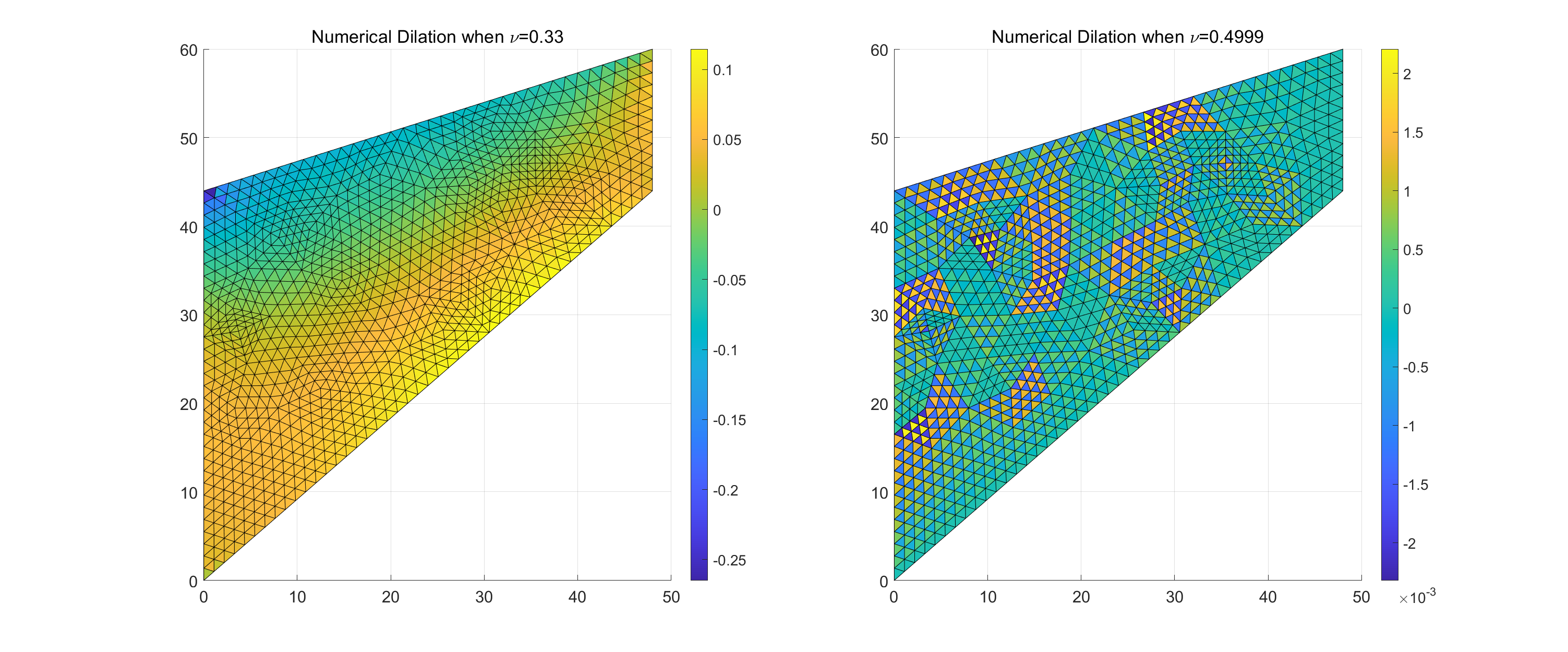}
    \caption{Numerical dilation by $\vecb{P}_1$ element on unstructured mesh. $\nu=0.33$ (left); $\nu=0.4999$ (right).}
    \label{figCooksP1}
 \end{figure}
 \begin{figure}
    \centering
    \includegraphics[scale=0.25]{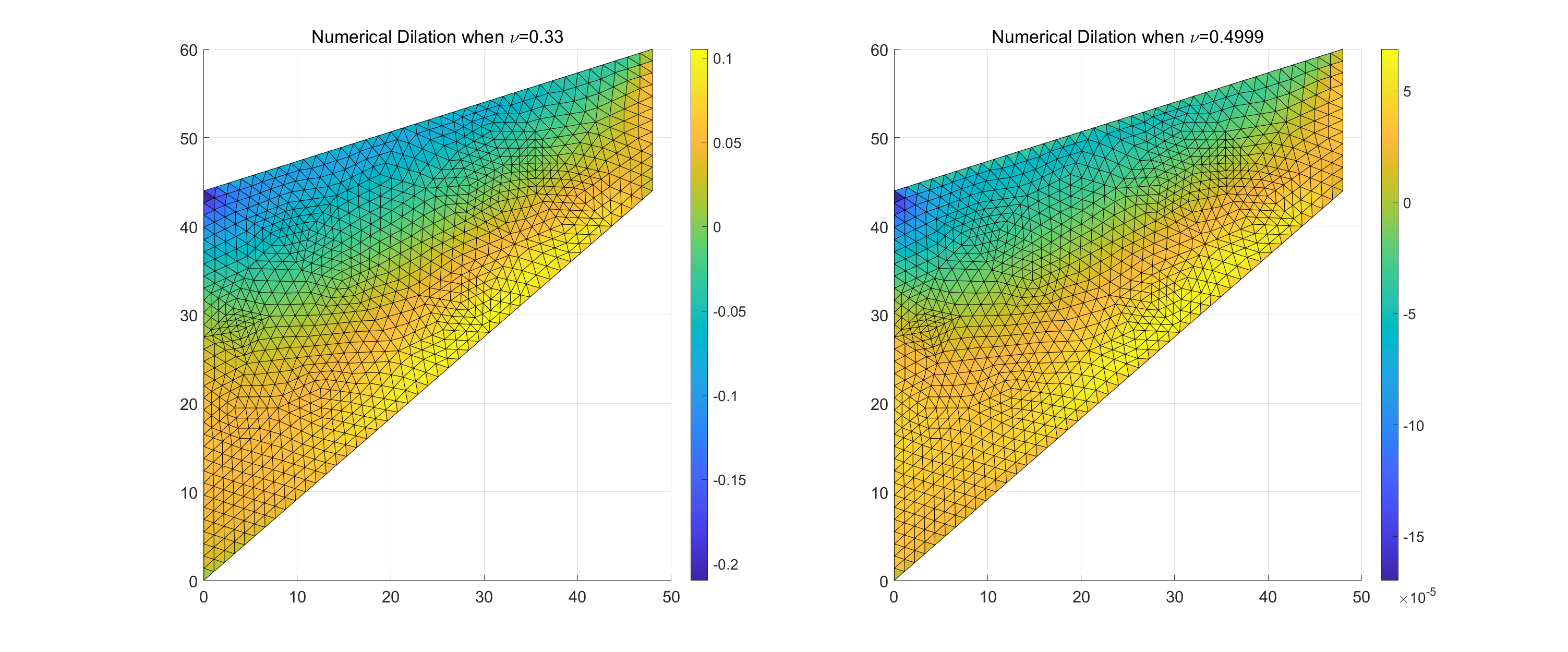}
    \caption{Numerical dilation by $\vecb{P}_1\oplus\vecb{RT}_0$ element on unstructured mesh. $\nu=0.33$ (left); $\nu=0.4999$ (right).}
    \label{figCooksP1PlusRT0}
 \end{figure}

\newpage
\noindent
\textbf{Funding} This work was supported by the National Natural Science Foundation of China (Grant 12131014).\\
\textbf{Data Availability} All data generated or analysed during this study are included in this manuscript.
\section*{Declarations}
\textbf{Conflict of Interest} The authors declare that they have no conflict of interest.

\bibliography{reference}

\end{document}